\documentclass[11pt]{article}
\usepackage{latexsym}
\usepackage{amssymb}
\usepackage{amsmath}
\usepackage{amsthm}
\usepackage{bm}
\usepackage{mathrsfs}

\allowdisplaybreaks[1]

\setlength{\topmargin}{0cm}
\setlength{\oddsidemargin}{0cm}
\setlength{\evensidemargin}{0cm}
\setlength{\textheight}{220mm}
\setlength{\textwidth}{16cm}

\theoremstyle{plain}
\newtheorem{thm}{Theorem}[section]
\newtheorem{lem}[thm]{Lemma}
\newtheorem{cor}[thm]{Corollary}
\newtheorem{pro}[thm]{Proposition}

\theoremstyle{definition}
\newtheorem{rem}[thm]{Remark}
\newtheorem{defin}[thm]{Definition}
\newtheorem{exm}[thm]{Example}
\newtheorem{assumption}[thm]{Assumption}

\newcommand{\RR}{{\mathbb R}}

\newcommand{\ep}{\varepsilon}

\newcommand{\XX}{{\mathbb X}}

\newcommand{\C}{{\mathcal C}}
\newcommand{\nn}{{\nonumber}}
\newcommand{\tZ}{\tilde{Z}}
\newcommand{\tPsi}{\tilde{\Psi}}
\newcommand{\tPhi}{\tilde{\Phi}}
\newcommand{\tY}{\tilde{Y}}
\newcommand{\ta}{\tilde{\alpha}}
\newcommand{\tb}{\tilde{\beta}}
\newcommand{\zb}{\beta_0}
\newcommand{\tL}{\tilde{L}}
\newcommand{\tA}{\tilde{A}}
\newcommand{\tF}{\tilde{F}}
\newcommand{\tC}{\tilde{C}}
\newcommand{\ua}{\underline{\alpha}}
\newcommand{\oa}{\overline{\alpha}}
\newcommand{\B}{{\mathcal B}}
\newcommand{\W}{{\mathcal W}}

\newcommand{\Lip}{{\rm Lip}}
\newcommand{\CLip}{\mathscr{C}_{{\rm Lip}}(\RR^d)}

\newcommand{\tomega}{\tilde{\omega}}

\newcommand{\bomega}{\bar{\omega}}
\newcommand{\hyp}{\mathchar`-}
\makeatletter
\newcommand{\opnorm}{\@ifstar\@opnorms\@opnorm}
\newcommand{\@opnorms}[1]{%
  \left|\mkern-1.5mu\left|\mkern-1.5mu\left|
   #1
  \right|\mkern-1.5mu\right|\mkern-1.5mu\right|
}
\newcommand{\@opnorm}[2][]{%
  \mathopen{#1|\mkern-1.5mu#1|\mkern-1.5mu#1|}
  #2
  \mathclose{#1|\mkern-1.5mu#1|\mkern-1.5mu#1|}
}
\makeatother

\newcommand{\wopnorm}{\widetilde{\opnorm{\mathbf{X}}_{\beta}}}

\numberwithin{equation}{section}

\begin{document}
\newcounter{aaa}
\newcounter{bbb}
\newcounter{ccc}
\newcounter{ddd}
\newcounter{eee}
\newcounter{xxx}
\newcounter{xvi}
\newcounter{x}
\setcounter{aaa}{1}
\setcounter{bbb}{2}
\setcounter{ccc}{3}
\setcounter{ddd}{4}
\setcounter{eee}{32}
\setcounter{xxx}{10}
\setcounter{xvi}{16}
\setcounter{x}{38}
\title
{Rough
differential equations\\
containing path-dependent bounded variation terms
\footnote{This version is accepted for publication in Journal of Theoretical
Probability\\https://doi.org/10.1007/s10959-024-01319-3}}
\author{Shigeki Aida
\\
Graduate School of Mathematical Sciences\\
The University of Tokyo\\
3-8-1 Komaba Meguro-ku Tokyo, 153-8914, JAPAN\\
e-mail: aida@ms.u-tokyo.ac.jp}
\date{}
\maketitle
 \begin{abstract}
  We consider rough differential equations
  whose coefficients contain path-dependent\\
  bounded variation
  terms and prove the existence and
  a priori estimate of solutions.
  These equations include classical path-dependent
  SDEs containing running maximum processes and normal reflection terms.
  We apply these results to determine the topological support of
the solution processes.
\end{abstract}
  
 \medskip
 
 {\it Keywords:}\, path-dependent rough differential equations,
reflected stochastic differential equation,
running maximum, 
Skorohod equation, rough path, controlled path

{\it MSC2020 subject classifications:}\,
60L20, 60H10, 60H20, 60G22

\section{Introduction}

  In the framework of It\^o's calculus,
  path-dependent stochastic differential equations(=SDEs)
  are naturally
  formulated and the existence and uniqueness hold under
  suitable standard assumptions on the coefficients.
  For example, reflected SDEs and SDEs containing running maximum and
  minimum processes are typical examples.
  In one dimensional cases, very simple SDEs containing
  the maximum and minimum processes and reflection term
  have been studied in detail.
    In this paper, we consider rough differential equations (=RDEs)
  whose coefficients contain path-dependent bounded variation
  terms and prove the existence and a priori estimate of solutions.
  This class of equations include the classical path-dependent
  SDEs mentioned above.
Although the solutions are not unique in general, the uniqueness holds
for smooth rough paths in many cases.
  Under the uniqueness assumption, we prove a continuity property of solution
  mappings at smooth rough paths which is useful to determine
  the topological support of the solution processes.

The structure of this paper is as follows.
In Section 2, we introduce a class of
RDEs containing bounded variation terms:
\begin{align}
 Z_t&=\xi+\int_0^t\sigma(Z_s,A(Z)_s)d\mathbf{X}_s,\label{rde1}
\end{align}
where $\mathbf{X}_t$ is a $1/\beta$ rough path ($1/3<\beta\le 1/2$)
and $A(Z)_t$ is 
a continuous bounded variation path 
which depends on
the past path $(Z_s)_{s\le t}$.
After that, we state our main theorem (Theorem~\ref{main theorem})
which proves the existence and a priori estimate of
solutions under $\sigma\in \Lip^{\gamma-1}$ ($\gamma>1/\beta$) and
suitable assumptions on $A$.
Note that the regularity assumption on $\sigma$ for the existence of solutions
is standard in the case of usual RDEs which corresponds to
$A\equiv 0$.
The solution $Z_t$
is a controlled path of
the driving rough path $\mathbf{X}$.
Actually, we solve this equation in 
product Banach spaces consisting of
$Z$ and $\Psi=A(Z)$ by applying
Schauder's fixed point theorem.

To this end, we introduce
H\"older continuous path spaces
$\C^{\theta}$ and
Banach spaces $\C^{q\hyp var, \theta}$
consisting of $\Psi$
based on the
control function $\omega$
of $\mathbf{X}$.
The latter is a set of paths whose $q$-variation norms
$(q\ge 1)$
are finite and
satisfy a certain
H\"older continuity defined by
$\omega$.
We also study basic properties of
the functional $A$.
We briefly explain examples but
we will discuss the detail in Section~\ref{examples}.

In Section 3, we prove our main theorem.
The uniqueness does not hold in general.
See Remark~\ref{remark on main theorem} (6).

In Section 4, we consider usual
$\beta$-H\"older rough path $\mathbf{X}$
with the control function
$\omega(s,t)=|t-s|$.
We show that the (generally multivalued)
solution mapping is continuous
at a rough path for which the solution is unique
in Proposition~\ref{continuity of Z}
using a priori estimate of solutions.
We use this result to prove support theorems in
Section 6.

In Section 5, we give examples.
In Subsection 5.1, we consider reflected rough differential equations
on a domain $D$ in $\RR^n$:
\begin{align}
 Y_t&=\xi+\int_0^t\sigma(Y_s)d\mathbf{X}_s+\Phi_t,\quad
\xi\in \bar{D},\label{rde2}
\end{align}
where $\Phi_t$ is the reflection term which forces $Y_t\in \bar{D}$.
This equation looks different from the equation studied in the main
theorem.
However, it is well-known that reflected It\^o (Stratonovich)
SDEs can be transformed to 
certain path-dependent It\^o (Stratonovich) SDEs without reflection
term.
This is used to prove Freidlin-Wentzell type large deviation principle
(\cite{anderson-orey}) and the support theorem (\cite{doss})
for reflected diffusions on domains with smooth boundary.
We prove the existence theorem (Theorem~\ref{reflected case})
under standard assumptions
(A) and (B) on $D$ and
$\sigma\in \Lip^{\gamma-1}$ by transforming the equation
(\ref{rde2})
to the corresponding path-dependent RDE
(\ref{rde1}).
This is an extension of the result in
\cite{aida-rrde} in which
we proved the existence of
solutions of (\ref{rde2}) under stronger assumptions that
$D$ satisfies the condition (H1) and $\sigma\in C^3_b$.

In 1-dimensional cases, perturbed SDEs and perturbed reflected SDEs
were studied by many people.
See {\it e.g.} 
\cite{cpy,chaumont-doney,davis,davis2, doney-zhang,perman-werner,yue-zhang}.
In Subsection 5.2, we give a short review of these subjects.

In Subsection 5.3, we consider multidimensional and rough path versions
of $1$-dimensional perturbed SDEs and perturbed reflected SDEs.
In the study of the latter one, we need to consider
an implicit Skorohod equation as in \cite{aida-rrde}.
As for perturbed reflected SDE whose driving process 
is the standard Brownian motion, we can extend the existence and
uniqueness result of the solution due to Doney and
Zhang~\cite{doney-zhang} by using our approach.
See Remark~\ref{remark on sde}.

Path-dependent functional $A(x)_t$ which we are mainly concerned with
in this paper is a
kind of generalization of the maximum process $\max_{0\le s\le t}|x_s|$
and the local time term $L(x)_t$.
The maximum process $\max_{0\le s\le t}|x_s|$ is obtained as the
limit of $\|x\|_{L^p([0,t])}$ as $p\to\infty$.
Hence it may be natural to study the case where
$A(x)_t=\|x\|_{L^p([0,t])}$.
In Subsection 5.4, we study such examples.

In Section 6,
we prove support theorems for solution processes
by using Proposition~\ref{continuity of Z}
and Wong-Zakai theorems.
In this section, except Theorem~\ref{general support theorem},
we consider the Brownian rough path
$\mathbf{W}$ which implies that we consider the usual
Stratonovich SDEs driven by the standard Brownian motion.

Section 7 is an appendix.
The solution $Y_t$ studied in Section 5
is a sum of a controlled path $Z_t$ and a continuous bounded variation
path $\Phi_t$.
For a given controlled path $Z$,
the Gubinelli derivative $Z'$ is uniquely determined
if the first level path $X$ of $\mathbf{X}$ is truly rough in the sense of
\cite{friz-hairer}.
In our case, $\Phi$ is certainly bounded variation but does not
have good regularity property in H\"older norm.
Hence it is natural to ask whether
$Z'$ is unique or not for $Y$ in our setting.
We study this problem by using a certain rough
property of the path $X$ in Subsection 7.1.
In Subsection 7.2, we make a remark on path-dependent rough
differential equations with drift.
This consideration is necessary for the study of the reflected diffusions with
the drift terms.

\section{Preliminary and Main Theorem}\label{preliminary}
Let us fix a positive number $T$.
Let $\omega(s,t)$~$(0\le s\le t\le T)$ be a control function.
That is, $(s,t)\mapsto \omega(s,t)\in \RR^{+}$ is a continuous function
and $\omega(s,u)+\omega(u,t)\le \omega(s,t)$~$(0\le s\le u\le t\le T)$ holds.
We introduce a mixed norm by using $\omega$ and $p$-variation norm.
We refer the readers to \cite{fp} for the related studies.
Let $E$ be a finite dimensional normed linear space.
For a continuous path $(x_t)$\, $(0\le t\le T)$ on 
$E$, we define
for $[s,t]\subset [0,T]$,
\begin{align}
\|x\|_{\infty, [s,t]}&=\max_{s\le u\le t}|x_u|,\\
\|x\|_{\infty\hyp var,[s,t]}&=\max_{s\le u\le v\le t}|x_{u,v}|,\\
\|x\|_{p\hyp var,[s,t]}&=\left\{
\sup_{{\cal P}}\sum_{k=1}^N|x_{t_{k-1},t_{k}}|^p\right\}^{1/p},
\label{total p-variation}
\end{align}
where ${\cal P}=\{s=t_0<\cdots<t_N=t\}$ is a partition 
of the interval $[s,t]$ and $x_{u,v}=x_v-x_u$.
When $[s,t]=[0,T]$, we may omit denoting $[0,T]$.
For $0<\theta\le 1, q\ge 1$, $0\le s\le t\le T$ 
and a continuous path $x$, we define
\begin{align}
\|x\|_{\theta,[s,t]}&=
\inf\left\{C>0~|~|x_{u,v}|\le C\omega(u,v)^{\theta}
\quad s\le u\le v\le t\right\},\\
\|x\|_{q\hyp var,\theta,[s,t]}&=
\inf\left\{
C>0~\Big |~
\|x\|_{q\hyp var,[u,v]}\le C\omega(u,v)^{\theta}\quad
s\le u\le v\le t
\right\}.
\end{align}
We use the convention that
$\inf \emptyset=+\infty$.
When $\omega(s,t)=|t-s|$, $\|x\|_{\theta, [s,t]}<\infty$ 
is equivalent to that $x_u$~$(s\le u\le t)$ 
is a H\"older continuous path with
the exponent $\theta$ in usual sense.
Hence we may say $x$ is an $\omega$-H\"older continuous
path with the exponent $\theta$ ($(\omega,\theta)$-H\"older continuous
path in short).
For two parameter function
$F_{s,t}$ $(0\le s\le t\le T)$, we define
$\|F\|_{\theta,[s,t]}$ similarly.

We denote by $\C^{\theta}([0,T], E)$ the set of 
$\omega$-H\"older continuous paths $x$ with values in
$E$ satisfying 
$\|x\|_{\theta}=\|x\|_{\theta,[0,T]}<\infty$.
We may denote the function space
by $(\C^{\theta}([0,T], E),\omega)$ to specify the control function.
$\C^{\theta}([0,T], E)$ is a Banach space with the norm 
$|x_0|+\|x\|_{\theta}$.
We may just write $\C^{\theta}(E)$ if there is no
confusion.
Let
$\C^{q\hyp var,\theta}(E)$ denote the set of $E$-valued 
continuous paths of finite $q$-variation defined on 
$[0,T]$
satisfying 
$\|x\|_{q\hyp var,\theta}:=\|x\|_{q\hyp var,\theta,[0,T]}<\infty$.
Note that $\C^{q\hyp var,\theta}(E)$ is a Banach 
space with the norm $|x_0|+\|x\|_{q\hyp var,\theta}$.
Obviously, any path $x\in \C^{q\hyp var,\theta}(E)$ satisfy
$|x_{s,t}|\le \|x\|_{q\hyp var,\theta}\omega(s,t)^{\theta}$.
We may write $\C^{\theta}, \C^{q\hyp var,\theta}$
for simplicity.

We next introduce the notation
for mappings between normed linear spaces.
Let $E, F$ be finite dimensional normed linear spaces.
For $\gamma=n+\theta$ $(n\in {\mathbb N}\cup \{0\},
0<\theta\le 1)$, $\Lip^{\gamma}(E,F)$
denotes the set of bounded functions $f$ on $E$ with values in $F$
which are $n$-times continuously differentiable and
whose derivatives up to $n$-th order are bounded and
$D^nf$ is a H\"older continuous function with the exponent
$\theta$ in usual sense.

We use the following lemma.
The compact embedding in $(2)$ is necessary for the application of
the Schauder fixed point theorem.

\begin{lem}\label{comparison of norms}
\begin{enumerate}
 \item[$(1)$] Let $1\le q'\le q$.
For a continuous path $x$, we have
\begin{align}
 \|x\|_{q\hyp var,[s,t]}\le
\|x\|_{q'\hyp var,[s,t]}^{q'/q}
\|x\|_{\infty\hyp var,[s,t]}^{(q-q')/q}
\le \|x\|_{q'\hyp var,[s,t]}.\label{comparison norms1}
\end{align}
 \item[$(2)$]
Let $1\le q'\le q$.
Let $0<\theta, \theta'\le 1$ be positive numbers such that
	     $q\theta\le q'\theta'$.
	     Then for any $x\in \C^{q'\hyp var,\theta'}$,
	     we have
	     \begin{align}
	      \|x\|_{q\hyp var,\theta}\le
	      \omega(0,T)^{(q'\theta'-q\theta)/q}
	      \|x\|_{q'\hyp var,\theta'}^{q'/q}
	      \|x\|_{\infty\hyp var}^{(q-q')/q}.
	      \label{comparison norms2}
	     \end{align}
Further if $q'<q$ holds, then
the inclusion $\C^{q'\hyp var,\theta'}\subset \C^{q\hyp var,\theta}$
is compact.
\item[$(3)$] If $\|x\|_{q\hyp var,[s,t]}<\infty$ for some $q$, then
$\lim_{q\to\infty}\|x\|_{q\hyp var,[s,t]}=\|x\|_{\infty\hyp var,[s,t]}$.
\end{enumerate}
\end{lem}

\begin{proof}
(1) We have
\begin{align}
 \|x\|_{q\hyp var,[s,t]}&=\left\{\sup_{{\cal P}}\sum_{i}
|x_{t_{i-1},t_i}|^{q}\right\}^{1/q}\nn\\
&\le \left\{\sup_{{\cal P}}\sum_{i}
|x_{t_{i-1},t_i}|^{q'}
\max_i |x_{t_{i-1},t_i}|^{q-q'}
\right\}^{1/q}\nn\\
&\le \|x\|_{q'\hyp var,[s,t]}^{q'/q}
\|x\|_{\infty,[s,t]}^{(q-q')/q}.
\end{align}
The second inequality follows from the trivial bound
$\|x\|_{\infty\hyp var,[s,t]}\le\|x\|_{q'\hyp var,[s,t]}$.

 \noindent
 (2)
 By (1), we have
 \begin{align}
\|x\|_{q\hyp var,[s,t]}&\le \|x\|_{q'\hyp var,\theta',[s,t]}^{q'/q}
\omega(s,t)^{(\theta'q')/q-\theta}
\|x\|_{\infty\hyp var,[s,t]}^{(q-q')/q}\omega(s,t)^{\theta}.
\label{estimate q and theta}
 \end{align}
 This implies $(\ref{comparison norms2})$.
If $\sup_n|(x_n)_0|+\|x_n\|_{q'\hyp var,\theta'}<\infty$, then by their
 equicontinuities,
there exists a subsequence such that 
$x_{n_k}$ converges to a certain function
$x_{\infty}$ in the uniform norm.
 By $(\ref{comparison norms2})$,
 we can conclude that the convergence 
takes place with respect to the norm on $\C^{q\hyp var,\theta}$.

\noindent
(3)
We need only to prove $\limsup_{q\to \infty}\|x\|_{q\hyp var,[s,t]}\le 
\|x\|_{\infty\hyp var,[s,t]}$.
Suppose $\|x\|_{q_0\hyp var,[s,t]}<\infty$.
Then for $q>q_0$,
\begin{align}
 \sup_{{\cal P}}\left(\sum_{i}|x_{t_{i-1},t_i}|^q\right)^{1/q}
&\le \sup_{{\cal P}}\left(\sum_{i}|x_{t_{i-1},t_i}|^{q_0}\right)^{1/q}
\sup_{{\cal P}}\max_i|x_{t_{i-1},t_i}|^{(q-q_0)/q}.
\end{align}
Taking the limit $q\to\infty$, we obtain the desired estimate.
 \end{proof}

Throughout this paper, $\beta$ is a positive number satisfying
$1/3<\beta\le 1/2$ if there are no further comments.
Let $\omega$ be a control function and
let 
$\mathbf{X}_{s,t}=(X_{s,t},\XX_{s,t})$\, $(0\le s\le t\le T)$
be a $(\omega,\beta)$-H\"older rough path on $\RR^d$.
That is, $\mathbf{X}$ satisfies Chen's relation and
the path regularity conditions,
\begin{align}
|X_{s,t}|\le \|X\|_{\beta}\omega(s,t)^{\beta},\quad
|\XX_{s,t}|\le \|\XX\|_{2\beta}\omega(s,t)^{2\beta},
\qquad 0\le s\le t\le T,
\end{align}
where $\|X\|_{\beta}(<\infty)$ and $\|\XX\|_{2\beta}(<\infty)$ 
denote the $\omega$-H\"older norm.
 We denote by $\mathscr{C}^{\beta}(\RR^d)$
the set of all $(\omega,\beta)$-H\"older rough paths,
where $\omega$ moves in the set of all control functions.
When $\omega(s,t)=|t-s|$, $\mathbf{X}_{s,t}$
is a usual $\beta$-H\"older rough path.
If $\mathbf{X}_{s,t}$ is a rough path with finite
$1/\beta$-variation,
setting
$\omega(s,t)=\|X\|_{1/\beta\hyp var, [s,t]}^{1/\beta}+
\|{\mathbb X}\|_{1/(2\beta)\hyp var, [s,t]}^{1/(2\beta)}$,
$\|X\|_{\beta}\le 1$ and $\|{\mathbb X}\|_{2\beta}\le 1$ hold.
We refer the reader
 to \cite{friz-hairer, friz-victoir, lq, lyons98, bailleul1}
 for the references of rough paths.

We use the following quantity,
\begin{align}
\wopnorm=\sum_{i=1}^3\opnorm{\mathbf{X}}_{\beta}^i,\quad
\opnorm{\mathbf{X}}_{\beta}=\|X\|_{\beta}
+\sqrt{\|\XX\|_{2\beta}}.
\end{align}

We introduce a set of controlled paths ${\mathscr D}^{2\theta}_X(\RR^n)$ 
of
$\mathbf{X}_{s,t}$, where $1/3<\theta\le \beta$ following
\cite{gubinelli, friz-hairer}.
A pair of $\omega$-H\"older continuous paths
$(Z,Z')\in \C^{\theta}([0,T],\RR^n)\times \C^{\theta}([0,T],
{\cal L}(\RR^d,\RR^n))$ with the exponent
$\theta$
is called a controlled path of $X$, if
the remainder term $R^Z_{s,t}=Z_t-Z_s-Z'_sX_{s,t}$ satisfies
$\|R^Z\|_{2\theta}<\infty$.
The set of controlled paths
${\mathscr D}^{2\theta}_X(\RR^n)$ 
is a Banach space with the norm
\begin{align}
\|(Z,Z')\|_{2\theta}
&=|Z_0|+|Z'_0|+\|Z'\|_{\theta}+\|R^Z\|_{2\theta}\quad
(Z,Z')\in {\mathscr D}^{2\theta}_X(\RR^n).
\end{align}

The rough differential equations which we will study contain
path dependent bounded variation term
$A(x)_t$.
We consider the following condition on $A$.
Note that the function space $\C^{\beta}$ in the following
statement depends on the control function $\omega$.

\begin{assumption}\label{assumption on A}
Let $\xi\in \RR^n$.
Let $\omega$ be a control function.
Let $A$ be a mapping from
$\C^{\beta}([0,T], \RR^n~|~x_0=\xi)$
 to $C([0,T], \RR^n)$ satisfying the following.
\begin{enumerate}
\item[$(1)$]$(\text{Adaptedness})$ 
$\left(A(x)_s\right)_{0\le s\le t}$ depends only on
$(x_s)_{0\le s\le t}$ for all $0\le t\le T$.
 \item[$(2)$] $(\text{Continuity})$ There exists $1/3<\zb<\beta$ such that
$A$ can be extended to a continuous mapping from
$\C^{\zb}([0,T],\RR^n~|~x_0=\xi)$ to 
$(C([0,T], \RR^n), \|~\|_{\infty,[0,T]})$.
We use the same notation $A$ to denote the extended mapping on
$\C^{\zb}$.
 \item[$(3)$] There exists a non-decreasing
	      positive continuous function
$F$ on $[0,\infty)$ such that for all
$x\in \C^{\zb}([0,T],\RR^n~|~x_0=\xi)$,
\begin{align}
 \|A(x)\|_{1\hyp var,[s,t]}\le
 F(\|x\|_{(1/\zb)\hyp var,[s,t]})
\|x\|_{\infty\hyp var,[s,t]}, \quad 0\le s\le t\le T
\label{estimate of Ax}
\end{align}
hold.
\end{enumerate}
\end{assumption}

 \begin{rem}
The conditions (1), (2) are natural.
In many cases, $A$ is defined on
continuous path spaces and is continuous with
respect to the uniform norm.
The condition (3) is strong assumption.
This implies that the total variation of
$A(x)$ on $[s,t]$ can be estimated by
the norm of the path $(x_u-x_s)$ on $s\le u\le t$.
Note that this does not exclude the case where
$A(x)_u$~$(s\le u\le t)$ depends on
$x_v$~$(v\le s)$.
\end{rem}

We have the following simple result.

\begin{lem}\label{property of A}
Let $\omega$ be a control function and let
$\C^{\beta}([0,T],\RR^n)$ be the
corresponding H\"older space.
\begin{enumerate}
 \item[$(1)$] 
Suppose $A : \C^{\beta}([0,T], \RR^n~|~x_0=\xi)\to
C([0,T], \RR^n)$ satisfies Assumption~$\ref{assumption on A}$
$(1)$, $(2)$.
Then the initial value $A(x)_0$ is independent of 
$x\in \C^{\beta}([0,T], \RR^n~|~x_0=\xi)$.
\item[$(2)$] Let $0<T'<T$ and set
$\omega_{T'}(s,t)=\omega(T'+s,T'+t)$ $(0\le s\le t\le T-T')$.
Then $\omega_{T'}$ is a control function.
\item[$(3)$]
Let $\C^{\beta}_{T'}([0,T-T'],\RR^n)$ be the  $(\omega_{T'},\beta)$-
H\"older space.
Let $y\in \C^{\beta}([0,T'],\RR^n)$ and $x\in \C^{\beta}_{T'}([0,T-T'],\RR^n)$
and suppose $y_{T'}=x_0$.
Set
\begin{align*}
\tilde{x}_t=
\begin{cases}
 y_t & t\le T',\\
x_{t-T'} &T'\le t\le T.
\end{cases}
\end{align*}
Then $\tilde{x}\in \C^{\beta}([0,T],\RR^n)$.
Let
\[
 \tilde{A}_{y,T'}(x)_t=A(\tilde{x})_{T'+t},\quad 0\le t\le T-T',\quad
x\in \C^{\beta}_{T'}([0,T-T'],\RR^n~|~x_0=y_{T'}).
\]
Then $\tilde{A}_{y,T'}$ satisfies Assumption~$\ref{assumption on A}$
replacing $\omega$ and $T$ by $\omega_{T'}$ and $T-T'$.
In particular, $(\ref{estimate of Ax})$ holds for the same function $F$.
\end{enumerate}
\end{lem}

\begin{proof}
(1) For $x\in C([0,T],\RR^n)$, let $x^t_u=x_{t\wedge u}$.
Then by Assumption~\ref{assumption on A}
(1), $A(x)_u=A(x^t)_u$ $(0\le u\le t)$ holds.
By a simple calculation, for any $x,y\in C([0,T],\RR^n)$,
we have
\begin{align*}
 \|x^t-y^t\|_{\C^{\zb}}\le \left(\|x\|_{\C^{\beta}}
+\|y\|_{\C^{\beta}}\right)\omega(0,t)^{\beta-\zb}.
\end{align*}
Since $(y^0)^t=y^0$, this implies $\lim_{t\to +0}\|x^t-y^0\|_{\C^{\zb}}=0$.
Hence
\begin{align*}
 |A(x)_0-A(y)_0|&=|A(x^t)_0-A(y^0)_0|\le
\|A(x^t)-A(y^0)\|_{\infty,[0,T]}\to
0\quad \text{as $t\downarrow 0$}.
\end{align*}
(2) and (3) are easy to check.
\end{proof}

Actually, the condition (3) automatically implies the following
stronger estimate.
By this result, we may assume that
the growth rate of $F(u)$ is at most
of order $u^{1/\beta}$, that is, a polynomial order.

\begin{lem}\label{lemma for A}
 Assume the mapping $A : \C^{\beta}([0,T], \RR^n~|~x_0=\xi)
 \to C([0,T], \RR^n)$ satisfies the condition
 {\rm (3)} in Assumption~$\ref{assumption on A}$.
\begin{enumerate}
\item[{\rm (1)}]
There exists $C>0$ such that
 \begin{align}
  \|A(x)\|_{1\hyp var,[s,t]}\le
C\left(\|x\|_{(1/\zb)\hyp var, [s,t]}^{1/\zb}+1\right)
  \|x\|_{\infty\hyp var,[s,t]}
  \quad 0\le s\le t\le T.\label{polynomial order}
 \end{align}
 \item[{\rm (2)}]
	      Let us choose positive numbers $\ta$ and $q$
	      such that $\ta\le\beta$ and $1\le q\le \beta/\ta$.
	      Then for any $x,x'\in \C^{\beta}$, we have
	      \begin{align}
	       &   \|A(x)-A(x')\|_{q\hyp var, \ta}\nn\\
	       &\le
	       \omega(0,T)^{\frac{\beta}{q}-\ta}
	       \left(F(\|x\|_{\zb}\omega(0,T)^{\zb})
	       \|x\|_{\beta}+
	       F(\|x'\|_{\zb}\omega(0,T)^{\zb})
	       \|x'\|_{\beta}
	       \right)^{1/q}\nn\\
	       &\qquad\qquad \times
	       \|A(x)-A(x')\|_{\infty\hyp var}^{1-(1/q)}.
	      \end{align}
	      \end{enumerate}
 \end{lem}

\begin{proof}
 Let
 $
  \omega_{1/\zb}(s,t)=\|x\|_{1/\zb\hyp var, [s,t]}^{1/\zb}.
 $
 For $\ep>0$,
 we choose the points
 $s=t_0<t_1<\cdots<t_N=t$ such that
 $\omega_{1/\zb}(t_{i-1},t_i)=\ep$ $(1\le i\le N-1)$
 and $\omega_{1/\zb}(t_{N-1},t_N)\le \ep$.
 By the super additivity of
 $\omega_{1/\zb}$, we have
 $
(N-1)\ep\le\sum_{i=1}^N\omega_{1/\zb}(t_{i-1},t_i)\le \omega_{1/\zb}(s,t)
 $
 and
 $N\le \omega_{1/\zb}(s,t)/\ep+1$.
 By the additivity property of the bounded variation norm,
 we have
 \begin{align*}
  \|A(x)\|_{1\hyp var,[s,t]}&=
  \sum_{i=1}^N\|A(x)\|_{1\hyp var, [t_{i-1},t_i]}\\
  &\le \sum_{i=1}^NF\left(\omega_{1/\zb}(t_{i-1},t_i)^{\zb}\right)
  \|x\|_{\infty\hyp var,[t_{i-1},t_i]}\\
  &\le F(\ep^{\zb})\left(\frac{\omega_{1/\zb}(s,t)}{\ep}+1\right)
  \|x\|_{\infty\hyp var, [s,t]}\\
  &=F(\ep^{\zb})\left(
  \frac{\|x\|_{1/\zb\hyp var, [s,t]}^{1/\zb}}{\ep}+1\right)
  \|x\|_{\infty\hyp var, [s,t]}
 \end{align*}
 which implies the desired estimate.

 \noindent
 (2) Applying Lemma~\ref{comparison of norms} (2)
 in the case where $q'=1, \theta'=\beta, \theta=\ta$,
 we have
  \begin{align}
  & \|A(x)-A(x')\|_{q\hyp var, \ta}\\
  &\le
  \omega(0,T)^{(\beta/q)-\ta}\left(
\|A(x)\|_{1\hyp var, \beta}+\|A(x')\|_{1\hyp var, \beta}
  \right)^{1/q}
   \|A(x)-A(x')\|_{\infty\hyp var}^{1-(1/q)}.
  \end{align}
Note that
\begin{align*}
 \|x\|_{1/\zb\hyp var, [s,t]}&=\sup\left\{\left|
\sum_{i}|x_{t_{i-1},t_i}|^{1/\zb}\right|^{\zb}\right\}\\
&\le \sup\left\{\left|
\sum_{i}\left(\|x\|_{\zb,[s,t]}\omega(t_{i-1},t_i)^{\zb}\right)^{1/\zb}
\right|^{\zb}\right\}\\
&\le \|x\|_{\zb,[s,t]}\omega(s,t)^{\zb}.
\end{align*}
 By the assumption on $A$, we have
\begin{align}
 \|A(x)\|_{1\hyp var, \beta}&\le
 F\left(\|x\|_{\zb}\omega(0,T)^{\zb}\right)\|x\|_{\beta}.
\end{align}
 This completes the proof.
 \end{proof}

\begin{rem}
Of course, we may optimize the estimate (\ref{polynomial order}) as follows:
\[
 \|A(x)\|_{1\hyp var, [s,t]}\le
\tilde{F}\left(\|x\|_{(1/\zb)\hyp var, [s,t]}\right)
\|x\|_{\infty\hyp var, [s,t]},
\]
where $\tilde{F}(u)=\inf_{\ep>0}F(\ep)\left\{\left(\frac{u}{\ep}\right)^{1/\zb}+1\right\}$.
\end{rem}

We now introduce our RDEs and state our main theorem.

 \begin{thm}\label{main theorem}
  Let $\gamma>1/\beta$.
Let $\mathbf{X}$ be a $(\omega,\beta)$-H\"older rough path.
Let $\sigma\in \Lip^{\gamma-1}(\RR^n\times \RR^n, {\cal
 L}(\RR^d,\RR^n))$ and $\xi\in \RR^n$.
Assume that the mapping $A : \C^{\beta}([0,T], \RR^n~|~x_0=\xi)
\to C([0,T], \RR^n)$ satisfies the condition in
 Assumption~$\ref{assumption on A}$.
Then the following hold.
\begin{enumerate}
 \item[$(1)$] There exists a controlled path $(Z,Z')\in 
{\mathscr D}^{2\beta}_X(\RR^n)$
such that
\begin{align}
 Z_t&=\xi+\int_0^t\sigma(Z_s,A(Z)_s)d\mathbf{X}_s,\quad
Z'_t=\sigma(Z_t,A(Z)_t),\quad 0\le t\le T.\label{path-rde1}
\end{align}
\item[$(2)$] All solutions $(Z,Z')$ of $(\ref{path-rde1})$
satisfy the following estimate:\\
there exist positive constants $K$ and $\kappa_1,\kappa_2,\kappa_3$
  which depend only on $\sigma, \beta, \gamma$, $F$ such that
\begin{align}
 \|Z\|_{\beta}+\|Z'\|_{\beta}+
\|A(Z)\|_{1\hyp var,\beta}+
\|R^Z\|_{2\beta}\le
K\left\{1+\left(1+\wopnorm\right)^{\kappa_1}\omega(0,T)\right\}^{\kappa_2}
\wopnorm^{\kappa_3}.
\label{a priori estimate}
\end{align}
\end{enumerate}
\end{thm}

First we make some remarks for this theorem and
after that we explain some examples.

\begin{rem}\label{remark on main theorem}
\noindent
(1)~From now on, we always set $\gamma>1/\beta$ for
$1/3<\beta\le 1/2$ if there is no further comment.
 
 \noindent
 (2)~Let $(Z,Z')\in {\mathscr D}^{2\theta}_X(\RR^n)$ $(1/3<\theta\le \beta)$.
 Let $\{\Psi_t\}_{0\le t\le T}$
 be a continuous bounded variation path on $\RR^n$.
 Then we can define the integral
 $\int_0^t\sigma(Z_s, \Psi_s)d\mathbf{X}_s$ in a similar way to
 the usual rough integral.
 We denote the derivative of $\sigma=\sigma(\xi,\eta)$~$(\xi\in \RR^n,
\eta\in \RR^n)$
with respect to $\xi$ by $D_1\sigma$ and $\eta$ by $D_2\sigma$.
Let
\begin{align*}
\Xi^{\Psi}_{s,t}&=
\sigma(Z_s,\Psi_s)X_{s,t}+(D_1\sigma)(Z_s,\Psi_s)Z'_s\XX_{s,t}+
(D_2\sigma)(Z_s,\Psi_s)\int_s^t\Psi_{s,r}\otimes dX_r
\end{align*}
 and $\tilde{\Xi}^{\Psi}_{s,t}=\Xi^{\Psi}_{s,t}
 -(D_2\sigma)(Z_s,\Psi_s)\int_s^t\Psi_{s,r}\otimes dX_r$.
 Let ${\cal P}=\{s=t_0<\cdots <t_N=t\}$ and
write $|{\cal P}|=\max_i|t_{i+1}-t_i|$.
  Then it is easy to check that
 $\lim_{|{\cal P}|\to 0}\sum_{i=1}^N
 \Xi^{\Psi}_{t_{i-1},t_i}$
 converges by the Sewing lemma using
(\ref{estimate of delta}).
 Actually $\lim_{|{\cal P}|\to 0}\sum_{i=1}^{N}
 \tilde{\Xi}^{\Psi}_{t_{i-1},t_i}$ also converges to the same limit value.
  We denote the limit by $\int_s^t\sigma(Z_u,\Psi_u)d\mathbf{X}_u$.
 Hence the sum of the term $\int_s^t\Psi_{s,r}\otimes dX_r$
 does not have any effect on the integral.
 However, we need to consider $\Xi^{\Psi}$ instead of
 $\tilde{\Xi}^{\Psi}$ to obtain estimates of
 the integral in Lemma~\ref{estimate of rough integral}
 which is necessary for the proof of the main theorem.

\noindent
(3) Let us consider the case $\sigma(\xi,\eta)=\tilde{\sigma}(\xi+\eta)$,
where $\tilde{\sigma}\in \Lip^{\gamma-1}(\RR^n,\mathcal{L}(\RR^d,\RR^n))$.
 Let $Y$ be a continuous path on $\RR^n$.
 Suppose that
 there exist $(Z, Z')\in {\mathscr D}^{2\theta}_X(\RR^n)$
 and continuous bounded variation path
 $(\Psi_t)_{0\le t\le T}$
 such that $Y_t=Z_t+\Psi_t$~$(0\le t\le T)$.
 Clearly, the decomposition of $Y$ to controlled path part $Z$
 and the bounded variation part $\Psi$ is not unique.
 We should note that our definition of
 $\int_0^t\tilde{\sigma}(Y_s)d\mathbf{X}_s$
 depends on $Z'$ and $Y$.
 However, under a natural assumption, the Gubinelli derivative
$Z'_t$ is uniquely defined for $Y$
and the integral does not depend on
 the decomposition $(Z,\Psi)$.
 We discuss this problem in the appendix.

\noindent
(4)  Theorem~\ref{main theorem} implies that the solution $Z_t$ satisfies
the following estimate:
\begin{align}
&\Bigl|Z_t-Z_s-\sigma(Z_s,A(Z)_s)X_{s,t}
-
(D_1\sigma)(Z_s,A(Z)_s)\sigma(Z_s,A(Z)_s)\XX_{s,t}\nonumber\\
&\qquad-
(D_2\sigma)(Z_s,A(Z)_s)\int_s^tA(Z)_{s,r}\otimes dX_r\Bigr|
\le G(\omega(0,T),\wopnorm)\omega(s,t)^{\theta},\quad 
0\le s\le t\le T.\label{davie}
\end{align}
Here $G$ is a certain polynomial function which depends on 
$\sigma,\beta,\gamma,F$.
Also $\theta(>1)$ is a positive constant which depends on $\beta$ and 
$\gamma$ (When $\gamma=3$, $\theta=3\beta$ holds).
Clearly, a path $Z_t$ which satisfies (\ref{davie}) is a solution of
(\ref{path-rde1}).

\noindent
(5)
Let $\tomega$ be a control function and $C_i$ be positive constants.
Actually, under the assumption that for
all $0\le s\le t\le T$,
 \begin{align*}
 \|A(x)\|_{1\hyp var, [s,t]}\le
C_1\left(1+\|x\|_{1/\beta_0\hyp var, [s,t]}^{\beta_0}\right)
\|x\|_{\infty\hyp var, [s,t]}+
C_2\tomega(s,t)+
C_3|t-s|^{\beta},
 \end{align*}
we can prove similar results
 to Theorem~\ref{main theorem} for
$\beta$-H\"older rough paths $\mathbf{X}$ with
$\omega(s,t)=|t-s|$ by a similar proof of the main theorem.
This extension is necessary to treat the examples in 
Example~\ref{example of A} (3) and (4).
However, we need to change the upper bound function 
in (\ref{a priori estimate}).
The reason is as follows.
The $\beta$-H\"older rough path $\mathbf{X}$ can be regarded as a
$(\bomega,\beta)$-H\"older rough path, where
$\bomega(s,t)=\tomega(s,t)+|t-s|$.
We can do the same proof as in the main theorem
in this setting.
The control function $\omega$ in (\ref{a priori estimate}) should be changed
to this $\bomega$ and accordingly $\wopnorm$ also should be changed
to the corresponding quantity.
Also we should replace the term $\wopnorm^{\kappa_3}$ by
$(1+\wopnorm)^{\kappa_3}$.

\noindent
(6)~
If $A\equiv 0$, the uniqueness of the solutions hold
under the assumption $\sigma\in \Lip^{\gamma}$.
However, even if $A\equiv 0$, 
 the uniqueness does not hold in general
under $\sigma\in
 \Lip^{\gamma-1}$.
See Davie~\cite{davie}.
Gassiat~\cite{gassiat} gave an example which showed
that the uniqueness does not hold
for reflected RDE even if the coefficient is smooth
and the domain is just a half space.
Contrary to this, in one dimensional case (note that the driving noise is
multidimensional one),
the uniqueness of the solutions of reflected RDEs were proved 
by Deya-Gubinelli-Hofmanov\'a-Tindel in \cite{dght}.
It may be interesting problem to find natural class of solutions for which
the uniqueness hold and a non-trivial class of
reflected RDEs or more generally path-dependent RDEs
for which the uniqueness hold in an appropriate sense.
See also Subsection~\ref{subsection 5.3} for some examples for which
the uniqueness hold.

The situation is different if $\beta>1/2$.
Ferrante and Rovira~\cite{ferrante-rovira} 
proved the existence of solutions of
reflected (Young) ODE on half space 
driven by fractional Brownian motion with the
Hurst parameter $H>1/2$.
 Falkowski and S\l omi\'nski~\cite{falkowski-slominski1}
 proved the Lipschitz continuity of
the Skorohod mapping on a half space in the H\"older space
and proved the uniqueness
in that case.
\end{rem}

We briefly explain examples.
We refer the reader for the detail to
Section~\ref{examples}.

\begin{exm}\label{example of A}

 $(1)$
Let $D$ be a domain in $\RR^n$ satisfying conditions
{\rm (A)} and {\rm (B)}.
Consider the Skorohod equation
$y_t=x_t+\phi_t$, where $x$ is a continuous path whose starting point
is in $\bar{D}$.
Also $y_t \in \bar{D}$~$(0\le t\le T)$ and $\phi_t$ is
the bounded variation term.
 The mapping $L : x\mapsto \phi$ satisfies
 Assumption~\ref{assumption on A}.
 Using this result, we can apply the main theorem to reflected
 rough differential equations.
 
\medskip

 \noindent
 $(2)$
 Let $f_i$ $(1\le i\le n)$ be Lipschitz functions on $\RR^n$
and define
\begin{align}
A(x)_t&=
\left(\max_{0\le s\le t}f_1(x_s),\ldots,
\max_{0\le s\le t}f_n(x_s)\right),
\quad x\in C([0,T],\RR^n).
\end{align}
 This satisfies Assumption~\ref{assumption on A}.
Actually this satisfies the stronger conditions
${\rm (Lip)}_{\rho}$ and 
${\rm (BV)}_{\rho}$ for certain
$\rho$ in Definition~\ref{definition of lip and bv}.
See Proposition~\ref{explicit rho} for the proof.
Note that even if we replace each $\max_{0\le s\le t}f_i(x_s)$
by finite products of maximum functions and minimum functions
of $f(x_s)$, Assumption~\ref{assumption on A} holds.

\noindent
(3)
 Let $c_1,\ldots,c_n$
 be $\beta$-H\"older continuous paths on $\RR^n$
 in usual sense.
Let $f$ be a Lipschitz map from $\RR^n$ to $\RR^n$.
 Let us consider a variant of the example (2) as follows:
\begin{align*}
 A(x)_t=
\left(\max_{0\le s\le t} |f(x_s)-c_1(s)|,\ldots,
\max_{0\le s\le t}|f(x_s)-c_n(s)|\right).
\end{align*}
 This does not satisfy Assumption~\ref{assumption on A} (3).
 However it holds that
 \begin{align*}
  \|A(x)\|_{1\hyp var, [s,t]}\le
C  \left(\|x\|_{\infty\hyp var,[s,t]}+|t-s|^{\beta}\right)
  \quad 0\le s\le t\le T
 \end{align*}
 for some positive constant $C$.
 
\noindent
(4) We consider the case
$\omega(s,t)=|t-s|$, that is,
usual $\beta$-H\"older rough path.
Path-dependent functional $A(x)_t$ which we are mainly concerned with
in this paper is a
kind of generalization of the maximum process $\max_{0\le s\le t}x_s$
and the local time term $L(x)_t$.
The maximum process $\max_{0\le s\le t}|x_s|$ is obtained as the
limit of $\|x\|_{L^p([0,t])}$ as $p\to\infty$.
Hence it may be natural to study the case where
$A(x)_t=\|x\|_{L^p([0,t])}$.
Theorem~\ref{main theorem} cannot be applied to this directly.
We will study this example in Subsection~\ref{subsection 5.3}.

 \noindent
 (5) Let $W_t$ be the $1$-dimensional
 standard Brownian motion starting at $0$.
 Let us consider the following equations,
\begin{align}
  Y_t&=\xi+\int_0^t\sigma(Y_s)dW_s+a\sup_{0\le s\le t}Y_s,
  \label{si-bm0}\\
	  Y_t&=\xi+\int_0^t\sigma(Y_s)dW_s
	  +a\sup_{0\le s\le t}Y_s+\Phi_t,~~\xi\ge 0,~~
  Y_t\ge 0~~\mbox{for all $t$}.
  \label{ml-bm0}
 \end{align}
 Here $a$ denotes a real number.
 
 The equation (\ref{ml-bm0}) contains the local time term
 $\Phi_t$ at $0$.
  These processes have been studied
{\it e.g.} in 
\cite{cpy,chaumont-doney,davis,davis2, doney-zhang,perman-werner,yue-zhang}.
  We see that a multidimensional version of these equations can be
 transformed to the equation of the form $(\ref{path-rde1})$ in
 Section~${\ref{section of perturbed diffusion}}$.
 We also give some brief review of $1$-dimensional cases there.
\end{exm}

\section{Proof of Theorem~\ref{main theorem}}\label{proof of main
 theorem}
 In the calculation below, we assume $\gamma\le 3$ as well as
$\gamma>1/\beta$.

 If we write $A(Z)_t=\Psi_t$, then the equation (\ref{path-rde1})
 reads
\begin{align}
Z_t&=\xi+\int_0^t\sigma(Z_s,\Psi_s)d\mathbf{X}_s,\label{Z equation}\\
 \Psi_t&=A\left(\xi+\int_0^{\cdot}\sigma(Z_s,\Psi_s)d\mathbf{X}_s\right)_t
 \label{Psi equation}.
\end{align}
We solve this equation by using Schauder's fixed point theorem.
First, we give an estimate of the integral
 $\int_s^t\Psi_{s,r}\otimes dx_r$~$(0\le s<t\le T)$, where
 $x\in \C^{\theta}$, $\Psi\in \C^{q\hyp var, \theta'}$
 and $\otimes$ denotes the tensor product.
To this end, we introduce some notations.
Let $0\le s\le t\le T$ and consider a mapping 
$F$ defined on
$\{(u,v)~|~s\le u\le v\le t\}$
with values in a certain vector space.
   Let ${\cal P}=\{s=t_0<\cdots<t_{N}\le t\}$ be a partition of
  $[s,t]$.
  We write
\begin{align*}
 \sum_{\mathcal{P}}F(u,v)=\sum_{i=1}^NF(t_{i-1},t_i).
\end{align*}

We use the following estimate.

  \begin{lem}\label{estimate of phi and w}
   Let $x\in \C^{\theta}(\RR^n)$.
   Let $p$ be a positive number such that
   $\theta p>1$.
   Let $q$ be a positive number such that
   $1/p+1/q\ge 1$ and
   $\Psi\in \C^{q\hyp var, \theta'}(\RR^n)$.
   For any $0\le s<t\le T$,
the integral $\int_s^t\Psi_{s,r}\otimes dx_r$ converges 
in the sense of Young integral and
it holds that
\begin{align}
 \left|\int_s^t\Psi_{s,r}\otimes dx_r\right|&
  \le
C_{\theta,q}\|\Psi\|_{q\hyp var, \theta'}
   \|x\|_{\theta}\omega(s,t)^{\theta+\theta'},
\end{align}
where 
$C_{\theta,q}=2^{\theta+\frac{1}{q}}\zeta\left(\theta+\frac{1}{q}\right)$.
\end{lem}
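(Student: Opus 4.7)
The strategy is to construct the integral as the limit of Riemann sums along an $\omega$-dyadic sequence of partitions and to estimate the telescoping differences by H\"older's inequality, producing a geometric series that sums to exactly $(2^\theta-2^{1/p})^{-1}$.

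As a preliminary, the bound $|w_{u,v}|\le \|w\|_\theta\,\omega(u,v)^\theta$ combined with super-additivity of $\omega$ and $\theta p>1$ yields $\|w\|_{p\hyp var,[u,v]}\le \|w\|_\theta\,\omega(u,v)^\theta$ on every subinterval, so $w$ has finite $p$-variation and the Young integral $\int \Phi\,dw$ is well-defined. Next, apply the intermediate value theorem to the continuous map $r\mapsto \omega(a,r)-\omega(r,b)$ to find, for every $[a,b]\subset[s,t]$ with $\omega(a,b)>0$, a point $c\in(a,b)$ with $\omega(a,c)=\omega(c,b)\le \omega(a,b)/2$. Starting from $\pi_0=\{s,t\}$ and iteratively bisecting, we obtain nested partitions $\pi_n$ with $2^n$ intervals, each of $\omega$-size at most $\omega(s,t)/2^n$. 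Let $S_n$ denote the left-point Riemann sum of $\int_s^t\Phi_{s,r}\otimes dw_r$ over $\pi_n$; since $\Phi_{s,s}=0$ we have $S_0=0$, and a direct computation gives
\[
S_{n+1}-S_n=\sum_{[a,b]\in\pi_n}\Phi_{a,c_{[a,b]}}\otimes w_{c_{[a,b]},b}.
\]

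To estimate this, apply H\"older's inequality with conjugate exponents $(p,p')$, $p'=p/(p-1)$; since $1/p+1/q\ge 1$ one has $p'\ge q$, so the $\ell^{p'}$-norm of the $\Phi$-increments is dominated by their $\ell^q$-norm, which in turn is at most $\|\Phi\|_{q\hyp var,[s,t]}\le \|\Phi\|_{q\hyp var,\theta'}\,\omega(s,t)^{\theta'}$ because the left halves $[a,c]$ are disjoint subintervals of $[s,t]$. For the $w$-factor use $|w_{c,b}|\le \|w\|_\theta\,\omega(c,b)^\theta$ together with $\omega(c,b)\le \omega(s,t)/2^{n+1}$ and the crucial tighter bound
\[
\sum_{[a,b]\in\pi_n}\omega(c_{[a,b]},b)\le \tfrac12\,\omega(s,t),
\]
inherited from $\omega(c,b)\le \omega(a,b)/2$ and super-additivity. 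Since $\theta p>1$, this leads to
\[
|S_{n+1}-S_n|\le \|\Phi\|_{q\hyp var,\theta'}\,\|w\|_\theta\,\omega(s,t)^{\theta+\theta'}\cdot 2^{-(n+1)(\theta-1/p)-1/p}.
\]
Summing the resulting geometric series over $n\ge 0$ gives exactly the constant $(2^\theta-2^{1/p})^{-1}$, and the Cauchy limit $\lim S_n$ coincides with the Young integral by standard uniqueness.

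The main obstacle is achieving the sharp constant: the factor $1/2$ in the bound on the total $\omega$-mass of the right halves is precisely what converts the crude geometric sum $2^{1/p}/(2^\theta-2^{1/p})$ into the asserted $1/(2^\theta-2^{1/p})$. This sharpness relies on the exact $\omega$-bisection, available because $\omega$ is continuous, and on careful bookkeeping so that the $\omega$-decay of the $w$-increments beats the geometric ratio $2^{1/p-\theta}<1$ coming from $\theta p>1$.
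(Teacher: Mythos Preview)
Your proof is correct and follows essentially the same approach as the paper: build $\omega$-dyadic partitions, estimate the telescoping Riemann-sum differences by H\"older's inequality (with exponents $p$ and its conjugate, then dominating the conjugate norm by the $q$-variation of $\Phi$), and sum the resulting geometric series to obtain exactly $(2^\theta-2^{1/p})^{-1}$. The only cosmetic difference is that the paper bisects each interval so that the right half carries exactly half the $\omega$-mass and bounds the $\ell^p$-sum of $w$-increments simply by counting the $2^{N-1}$ terms of size at most $(2^{-N}\omega(s,t))^{\theta p}$, which yields the identical bound $2^{-N(\theta-1/p)-1/p}$ as your ``tighter'' estimate $\sum\omega(c,b)\le\omega(s,t)/2$.
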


 \begin{proof}
The assumption implies $x$ is finite $1/\theta$-variation.
Moreover $\theta+1/q>1$ holds. Hence the Young integral
of $\int_s^t\Psi_{s,r}\otimes dx_r$ converges and the following estimate holds:
\begin{align*}
 \left|\int_s^t\Psi_{s,r}\otimes dx_r\right|
&\le C_{\theta,q}\|\Psi\|_{q\hyp var,[s,t]}
\|x\|_{1/\theta\hyp var, [s,t]}
\le C_{\theta,q}\|\Psi\|_{q\hyp var, \theta'}\|x\|_{\theta}
\omega^{\theta+\theta'}(s,t),
\end{align*}
which completes the proof.
   \end{proof}

By using this lemma, 
we will give estimates for the integral
$\int_s^t\sigma(Z_u,\Psi_u)d\mathbf{X}_u$.
As we mentioned,
we denote the derivative of $\sigma=\sigma(\xi,\eta)$~$(\xi\in \RR^n,
\eta\in \RR^n)$
with respect to $\xi$ by $D_1\sigma$ and $\eta$ by $D_2\sigma$.
Also we write
$D\sigma(\xi,\eta)(u,v)=D_1\sigma(\xi,\eta)u+D_2\sigma(\xi,\eta)v$.
We write $Y_t=(Z_t,\Psi_t)\in \RR^n\times \RR^n$.
Let $(Z,Z')\in {\mathscr D}^{2\alpha}_X(\RR^n)$
and $\Psi\in \C^{q\hyp var,\ta}(\RR^n)$.

  Until the end of this section, we choose and fix $p>0$ such that
 $1/\beta<p<\gamma$.
For this $p$, we assume $q,\alpha,\ta$ satisfy the following condition.
\begin{align}
q\ge 1,\quad \frac{1}{p}+\frac{1}{q}\ge 1,\quad 
\alpha p>1,\quad
\frac{1}{3}<\alpha\le\ta\le\beta.\label{condition on qata}
\end{align}
As we explained, we consider
\begin{align}
\Xi_{s,t}&=
\sigma(Y_s)X_{s,t}+(D_1\sigma)(Y_s)Z'_s\XX_{s,t}+
(D_2\sigma)(Y_s)\int_s^t\Psi_{s,r}\otimes dX_r.\label{Xi}
\end{align}
By a simple calculation, we have for $s<u<t$,
\begin{align}
(\delta\Xi)_{s,u,t}&:=
\Xi_{s,t}-\Xi_{s,u}-\Xi_{u,t}\nn\\
\hspace{-2.2cm}&=
-\left(\int_0^1(D_1\sigma)(Y_s+\theta
 Y_{s,u})\right)\left(R^Z_{s,u}\otimes X_{u,t}\right)\nn\\
&\quad +
\left\{(D_1\sigma)(Y_s)-\int_0^1(D_1\sigma)(Y_s+\theta Y_{s,u})d\theta\right\}
\left((Z'_sX_{s,u})\otimes X_{u,t}\right)\nn\\
&\quad+
\left\{(D_2\sigma)(Y_s)-\int_0^1(D_2\sigma)(Y_s+\theta Y_{s,u})d\theta\right\}
\left(\Psi_{s,u}\otimes X_{u,t}\right)\nn\\
&\quad +\left((D_1\sigma)(Y_s)Z_s'-(D_1\sigma)(Y_u)Z'_u\right)\XX_{u,t}\nn\\
&\quad +\left((D_2\sigma)(Y_s)-(D_2\sigma)(Y_u)\right)
\int_u^t\Psi_{u,r}\otimes dX_r.
\label{explicit form of delta}
\end{align}
Thus, under the assumption on
$Z, \Psi$, applying
Lemma~\ref{estimate of phi and w} to the case
$\theta=\beta$, $\theta'=\ta$ and
$(a+b+c)^{\gamma-2}\le 
3^{\gamma-2}(a^{\gamma-2}+b^{\gamma-2}+c^{\gamma-2})$,
we obtain 

\begin{align}
 \lefteqn{\left|\left(\delta \Xi\right)_{s,u,t}\right|}\nn\\
&\le
 \|D_1\sigma\|_{\infty}\|R^Z\|_{2\alpha}\|X\|_{\beta}\omega(s,t)^{\beta+2\alpha}\nn\\
&
\quad +\|D\sigma\|_{\gamma-2}|Y_{s,u}|^{\gamma-2}
\left\{\|Z'\|_{\infty}\|X\|_{\beta}
\omega(s,u)^{\beta}+\|\Psi\|_{q\hyp var,\ta}\omega(s,u)^{\ta}
\right\}\|X\|_{\beta}\omega(u,t)^{\beta}\nn\\
&\quad +\left\{\|D_1\sigma\|_{\infty}\|Z'\|_{\alpha}\omega(s,u)^{\alpha}+
\|D_1\sigma\|_{\gamma-2}|Y_{s,u}|^{\gamma-2}\|Z'\|_{\infty}\right\}
\|\XX\|_{2\beta}\omega(u,t)^{2\beta}\nn\\
&\quad +C_{\beta,q}
\|D_2\sigma\|_{\gamma-2}|Y_{s,u}|^{\gamma-2}
\|\Psi\|_{q\hyp var,\ta}\|X\|_{\beta}\omega(u,t)^{\ta+\beta}\nn\\
&
\le
\|D\sigma\|_{\infty}\|R^Z\|_{2\alpha}\|X\|_{\beta}\omega(s,t)^{\beta+2\alpha}
+\|D\sigma\|_{\infty}\|Z'\|_{\alpha}\|\XX\|_{2\beta}\omega(s,t)^{\alpha+2\beta}\nn\\
& \quad +
C\|D\sigma\|_{\gamma-2}
\Bigl\{
\left(\|Z'\|_{\infty}
\|X\|_{\beta}\omega(s,t)^{\beta-\alpha}\right)^{\gamma-2}
+\left(\|R^Z\|_{2\alpha}\omega(s,t)^{\alpha}\right)^{\gamma-2}\nn\\
&\qquad \qquad\qquad \qquad \qquad +
\left(\|\Psi\|_{q\hyp var,\ta}\omega(s,t)^{\ta-\alpha}\right)^{\gamma-2}
\Bigr\}\cdot\nn\\
&\Bigl\{
\left(
\|Z'\|_{\infty}\|X\|_{\beta}\omega(s,t)^{\beta}+
\|\Psi\|_{q\hyp var,\ta}\omega(s,t)^{\ta}
\right)\|X\|_{\beta}\omega(s,t)^{\beta}
+\|Z'\|_{\infty}\|\XX\|_{2\beta}\omega(s,t)^{2\beta}
 \Bigr\}\nn\\
 &\qquad\qquad\qquad\cdot
\omega(s,t)^{\alpha(\gamma-2)}
,\label{estimate of delta 0}
\end{align}
where $C=3^{\gamma-2}(2+C_{\beta,q})$.
Therefore, there exists a positive constant $C$ which depends only on
$\gamma,\beta,p$ such that
\begin{align}
\left|\left(\delta\Xi\right)_{s,u,t}\right|
&\le
CK_{\sigma}
f\left(
\|R^Z\|_{2\alpha},\|Z'\|_{\alpha},
\|Z'\|_{\infty},
\|\Psi\|_{q\hyp var,\ta}\right)
\wopnorm\left(1+\omega(s,t)^{1/2}\right)
\omega(s,t)^{\beta+(\gamma-1)\alpha}\nn\\
&\qquad\quad\quad 0\le s\le t\le T\label{estimate of delta}
\end{align}
where
\begin{align}
K_{\sigma}&=\|D\sigma\|_{\gamma-2}+\|D\sigma\|_{\infty},\\
f(a,b,c,d)&=a+b+
\left(a^{\gamma-2}+c^{\gamma-2}+d^{\gamma-2}\right) (c+d).
\end{align}

Let ${\cal P}=\{t_k\}_{k=0}^N$ be a partition of
$[s,t]$. 
Since $\beta+(\gamma-1)\alpha>\beta+p\alpha-\alpha\ge p\alpha>1$,
by the Sewing lemma (see {\it e.g.}\cite{lq, friz-victoir, friz-hairer}), 
the following limit exists,
\begin{align}
I((Z,Z'),\Psi)_{s,t}&:=
\lim_{|{\cal P}|\to 0}\sum_{{\cal P}}\Xi_{u,v}.
\end{align}
We may denote $I\left((Z,Z'),\Psi\right)$ by
\begin{align}
 I(Z,\Psi)_{s,t}\quad \mbox{or}\quad \int_s^t\sigma(Z_u,\Psi_u)d\mathbf{X}_u
\end{align}
simply if there are no confusion.
This integral satisfies the additivity property 
\begin{align}
I(Z,\Psi)_{s,u}+I(Z,\Psi)_{u,t}=I(Z,\Psi)_{s,t}\qquad
0\le s\le u\le t\le T.
\end{align}
The pair $\left(I(Z,\Psi), \sigma(Y_t)\right)$ is
actually a controlled path of $X$.
In fact, we have the following estimates.

\begin{lem}\label{estimate of rough integral}
Assume
$(Z,Z')\in {\mathscr D}^{2\alpha}_X(\RR^n)$
and $\Psi\in \C^{q\hyp var,\ta}(\RR^n)$ and
$q, \alpha, \ta$ satisfy $(\ref{condition on qata})$.
For any $0\le s\le t\le T$, we have the following estimates.
The constant $K$ below depends only on
 $\|\sigma\|_{\infty}$,
 $\|D\sigma\|_{\infty}, \|D\sigma\|_{\gamma-2}, \alpha,\beta, p,\gamma$
and may change line by line.
 
\begin{enumerate}
 \item[$(1)$] 
\begin{align}
|\Xi_{s,t}|&\le
\Bigl\{\|\sigma\|_{\infty}\|X\|_{\beta}
+\|D\sigma\|_{\infty}\|Z'\|_{\infty}
\|\XX\|_{2\beta}\omega(s,t)^{\beta}\nn\\
&\qquad
+C_{\beta,q}
\|D\sigma\|_{\infty} \|\Psi\|_{q\hyp var,\ta}
\|X\|_{\beta}\omega(s,t)^{\ta}\Bigr\}\omega(s,t)^{\beta}.
\end{align}
\item[$(2)$] 
\begin{align}
\lefteqn{\left|I(Z,\Psi)_{s,t}-\Xi_{s,t}\right|}\nn\\
&\le K 
f\left(\|R^Z\|_{2\alpha}, \|Z'\|_{\alpha},
\|Z'\|_{\infty}, \|\Psi\|_{q\hyp var,\ta}\right)
\wopnorm\left(1+\omega(s,t)^{1/2}\right)
\omega(s,t)^{\gamma\alpha+\beta-\alpha}
\label{approximation of integral}
\end{align}
and
\begin{align}
&\|I(Z,\Psi)\|_{\beta}
\le
\|\sigma\|_{\infty}\|X\|_{\beta}\nonumber\\
&\quad+
K 
g\left(\|R^Z\|_{2\alpha}, \|Z'\|_{\alpha},
 \|Z'\|_{\infty}, \|\Psi\|_{q\hyp var,\ta}\right)
\left(1+\omega(0,T)^{1/2}
+\omega(0,T)^{\alpha(\gamma-1)-\ta+\frac{1}{2}}\right)\nonumber\\
&\qquad\qquad\times\omega(0,T)^{\ta}
\wopnorm,
\label{beta Holder continuity}
\end{align}
where 
\begin{align}
f(a,b,c,d)&=a+b+(a^{\gamma-2}+c^{\gamma-2}+d^{\gamma-2})(c+d),\\
g(a,b,c,d)&=
f(a,b,c,d)+
c+d.
\end{align}
\item[$(3)$] 
\begin{align}
 \lefteqn{\left|I(Z,\Psi)_{s,t}-\sigma(Y_s)X_{s,t}\right|}\nn\\
&\le 
\Biggl\{
K 
f\left(\|R^Z\|_{2\alpha}, \|Z'\|_{\alpha},
\|Z'\|_{\infty}, \|\Psi\|_{q\hyp var,\ta}\right)
\wopnorm\left(1+\omega(s,t)^{1/2}\right)
\omega(s,t)^{\gamma\alpha-2\ta+\beta-\alpha}\nn\\
&\quad\quad +
\|D\sigma\|_{\infty}
\|Z'\|_{\infty}
\|\XX\|_{2\beta}\omega(s,t)^{2(\beta-\ta)}\nn\\
&\quad\quad
+C_{\beta,q}
\|D\sigma\|_{\infty}\|\Psi\|_{q\hyp var,\ta}
\|X\|_{\beta}\omega(s,t)^{\beta-\ta}
\Biggr\}\omega(s,t)^{2\ta}.\label{RZ}
\end{align}
\item[$(4)$]
\begin{align}
\lefteqn{|\sigma(Y_t)-\sigma(Y_s)|}\nn\\
&\le \|D\sigma\|_{\infty}
\left\{\|Z'\|_{\infty}
\|X\|_{\beta}
\omega(s,t)^{\beta-\ta}+
\|R^Z\|_{2\alpha}\omega(s,t)^{2\alpha-\ta}
+\|\Psi\|_{q\hyp var,\ta}\right\}\omega(s,t)^{\ta}.
\end{align}
\item[$(5)$] $(I(Z,\Psi),\sigma(Z,\Psi))\in {\mathscr D}^{2\ta}_X$
holds.
\end{enumerate}
\end{lem}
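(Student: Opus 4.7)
The five estimates are essentially consequences of (i) the defining formula \eqref{Xi} for $\Xi_{s,t}$, (ii) the cocycle bound \eqref{estimate of delta} on $(\delta\Xi)_{s,u,t}$ that was just derived, and (iii) the Young-type estimate of Lemma~\ref{estimate of phi and w}. I would proceed in the order (1), (2), (3), (4), (5).

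For (1) I would simply read off each term in \eqref{Xi}: use $|\sigma(Y_s)|\le \|\sigma\|_\infty$ and the pathwise bound $|X_{s,t}|\le\|X\|_\beta\omega(s,t)^\beta$, similarly $|\XX_{s,t}|\le\|\XX\|_{2\beta}\omega(s,t)^{2\beta}$, and then estimate $|\int_s^t\Phi_{s,r}\otimes dX_r|$ by Lemma~\ref{estimate of phi and w} applied with $w=X$, $\theta=\beta$ and $\theta'=\ta$, which produces the factor $\betap\|\Phi\|_{q\hyp var,\ta}\|X\|_\beta\omega(s,t)^{\beta+\ta}$.

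Item (2) is the substantive step and the only place where one does real work. Given the already-established bound \eqref{estimate of delta} of $|(\delta\Xi)_{s,u,t}|$ by $C\cdot F(\text{norms})\cdot\omega(s,t)^{\beta+(\gamma-1)\alpha}$ with $\beta+(\gamma-1)\alpha>1$ (since $\alpha p>1$ and $\gamma>p$), I would invoke the sewing lemma — concretely, the same dyadic-partition argument already used in the proof of Lemma~\ref{estimate of phi and w}. Writing $I(\Z,\Phi)_{s,t}=\lim_N I(\mathcal P_N)_{s,t}$ with the $\omega$-dyadic partitions $\mathcal P_N$ of $[s,t]$, telescoping $I(\mathcal P_N)-\Xi$ across the refinements $\mathcal P_N\to\mathcal P_{N-1}$ yields a geometric series whose common ratio is $2^{-(\beta+(\gamma-1)\alpha-1)}<1$; this gives \eqref{approximation of integral}. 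The Hölder bound \eqref{beta Holder continuity} is then immediate from $|I(Z,\Phi)_{s,t}|\le|\Xi_{s,t}|+|I(Z,\Phi)_{s,t}-\Xi_{s,t}|$, extracting one factor of $\wopnorm\omega(s,t)^\beta$ and bounding the residual $\omega(s,t)^{(\gamma-1)\alpha}$ by a constant times $1+\omega(0,T)^{1/2}$.

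Items (3), (4) and (5) are algebraic consequences. For (3) I would write
\[
 I(Z,\Phi)_{s,t}-\sigma(Y_s)X_{s,t}=\bigl(I(Z,\Phi)_{s,t}-\Xi_{s,t}\bigr)+(D_1\sigma)(Y_s)Z'_s\XX_{s,t}+(D_2\sigma)(Y_s)\!\int_s^t\!\Phi_{s,r}\otimes dX_r,
\]
apply \eqref{approximation of integral} to the first summand, the uniform bound on $D_1\sigma$ together with $|\XX_{s,t}|\le\|\XX\|_{2\beta}\omega(s,t)^{2\beta}$ to the second, and Lemma~\ref{estimate of phi and w} again to the third; collecting an overall factor $\omega(s,t)^{2\ta}$ yields \eqref{RZ}. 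For (4) the mean-value estimate $|\sigma(Y_t)-\sigma(Y_s)|\le\|D\sigma\|_\infty(|Z_{s,t}|+|\Phi_{s,t}|)$ combined with $|Z_{s,t}|\le|Z'_s||X_{s,t}|+|R^Z_{s,t}|$ gives the claim. Finally, (5) reads off (3): the estimate \eqref{RZ} shows that $I(Z,\Phi)_{s,t}-\sigma(Y_s)X_{s,t}=O(\omega(s,t)^{2\ta})$, which is precisely the statement that $(I(Z,\Phi),\sigma(Y_\cdot))\in\mathscr D^{2\ta}_X$; the Hölder continuity of $\sigma(Y_\cdot)$ needed for membership is part (4).

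The only delicate step is the sewing argument in (2), but the geometric-series telescoping is identical in structure to the computation carried out inside the proof of Lemma~\ref{estimate of phi and w}; the remaining items are bookkeeping on top of \eqref{Xi}, \eqref{estimate of delta} and Lemma~\ref{estimate of phi and w}.
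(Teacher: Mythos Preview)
Your proposal is correct and follows essentially the same route as the paper: (1) is read off from the explicit formula \eqref{Xi} together with Lemma~\ref{estimate of phi and w}; (2) is obtained from the cocycle estimate \eqref{estimate of delta} via the same dyadic/sewing telescoping used in the proof of Lemma~\ref{estimate of phi and w}; and (3)--(5) are the bookkeeping consequences of (1), (2), Lemma~\ref{estimate of phi and w} and the decomposition $Z_{s,t}=Z'_sX_{s,t}+R^Z_{s,t}$. The paper's own proof records exactly these dependencies in a single sentence per item, so your write-up is simply a more detailed version of the same argument.
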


\begin{rem}\label{remark on estimate of rough integral}
 (1)
 Under the condition $(\ref{condition on qata})$,
 $(\gamma-1)\alpha+\beta>1$ holds
 as we noted.

 \noindent
(2)
 If $\Psi\in {\cal C}^{1\hyp var,\beta}$, then
 $I(Z,\Psi)\in {\mathscr D}^{2\beta}_X$.

\noindent
(3) 
We give estimates of paths on $[0,T]$ in
Lemma~\ref{estimate of rough integral}.
However, a similar estimate holds on small interval
$[0,\tau]$ $(0<\tau<T)$ by replacing
the norms and $\omega(0,T)$ in Lemma~\ref{estimate of rough integral}
by the norms on $[0,\tau]$ and $\omega(0,\tau)$.

\noindent
(4)
Let $1/3<\tb<\beta$.
Then $\mathbf{X}$ can be regarded as a $1/\tb$-rough path.
It is easy to check that
Lemma~\ref{estimate of rough integral} still holds
under the condition (\ref{condition on qata}) by replacing $\beta$ by $\tb$.
Suppose $\omega(0,T)\le 1$.
Then $\|X\|_{\tb}\le \|X\|_{\beta}$ and
$\|\XX\|_{2\tb}\le \|\XX\|_{2\beta}$ holds.
We use these results to prove a priori estimate in
Theorem~\ref{main theorem}.
\end{rem}

\begin{proof}
(1)\, This follows from the explicit form of (\ref{Xi})
and Lemma~\ref{estimate of phi and w}.

\noindent
(2)\, This follows from (\ref{estimate of delta}) and the Sewing lemma.

\noindent
(3)\, This follows from (2) and Lemma~\ref{estimate of phi and w}.

\noindent
(4)\, This follows from the definition of $Y_t$.

\noindent
(5)\, This follows from (3) and (4) and $2\alpha\ge \ta$.
\end{proof}

We consider the product Banach space
$
{\mathscr D}^{2\theta_1}_{X}\times
\C^{q\hyp var,\theta_2},
$
where $1/3<\theta_1\le 1/2$ and $0<\theta_2\le 1$.
The norm is defined by
\begin{align}
\|((Z,Z'),\Psi)\|= 
|Z_0|+|Z'_0|+\|Z'\|_{\theta_1}+\|R^Z\|_{2\theta_1}+
|\Psi_0|+\|\Psi\|_{q\hyp var,\theta_2}.
\end{align}
Let $\xi$ be the starting point of $Z$ and let
$\eta=A(x)_0\in \RR^n$.
Note that $\eta$ is independent of $x$.
Let
\begin{align}
 \W_{T,\theta_1,\theta_2,q,\xi,\eta}&=
\Big\{\left((Z,Z'), \Psi\right)\in {\mathscr D}^{2\theta_1}_X\times
\C^{q\hyp var,\theta_2}\,|\, Z_0=\xi, Z'_0=\sigma(\xi,\eta), \Psi_0=\eta\Big\}.
\end{align}
The solution of RDE could be obtained as a fixed point of the mapping,
\begin{align}
{\cal M} &: \left((Z,Z'),\Psi\right)\left(\in \W_{T,\alpha,\ta,q,\xi,\eta}
\right)
\mapsto \left((\xi+I(Z,\Psi),\sigma(Y)),A(\xi+I(Z,\Psi))\right)
(\in \W_{T,\alpha,\ta,q,\xi,\eta}).
\end{align}

We prove a continuity property
of ${\cal M}$.

\begin{lem}[Continuity]\label{continuity}
Assume 
\begin{align}
\frac{1}{3}<\zb\le \alpha<\ta<\beta,\quad
\alpha p>1,\quad
 1<q<\min\left(\frac{p}{p-1}, \,\,\frac{\beta}{\ta}\right),
\label{condition on qalphata}
\end{align}
 where $\zb$ is the number in
 Assumption~$\ref{assumption on A}$.
Then ${\cal M}$ is continuous.
\end{lem}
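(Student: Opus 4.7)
The plan is to view $\mathcal{M}$ as the composition of three maps and prove sequential continuity for (a) the Gubinelli derivative $(Z,\Phi)\mapsto \sigma(Y)$ (where $Y=(Z,\Phi)$) in the $\alpha$-H\"older norm, (b) the rough integral $(Z,Z',\Phi)\mapsto I((Z,Z'),\Phi)$ as a controlled path in $\mathscr{D}^{2\alpha}_X$, and (c) the reflection $Z\mapsto L(\xi+I(Z,\Phi))$ in $\C^{q\hyp var,\ta}$. Fix a convergent sequence $((Z_n,Z'_n),\Phi_n)\to ((Z,Z'),\Phi)$ in the product Banach norm; then all four norms $\|Z_n\|_\alpha,\|Z'_n\|_\alpha,\|R^{Z_n}\|_{2\alpha},\|\Phi_n\|_{q\hyp var,\ta}$ are uniformly bounded, and $\|Y_n-Y\|_\infty\to 0$ since starting values coincide.

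For (b) I would re-run the derivation of (\ref{explicit form of delta})--(\ref{estimate of delta}) applied to $\Xi^n-\Xi$: each coefficient $\sigma(Y_s),(D_1\sigma)(Y_s)Z'_s,(D_2\sigma)(Y_s)$ is split by adding and subtracting between the $n$th and limit values, so that every term in $\delta(\Xi^n-\Xi)_{s,u,t}$ carries a factor from $\{Z'_n-Z',R^{Z_n}-R^Z,\Phi_n-\Phi,\sigma(Y_n)-\sigma(Y),Y_n-Y\}$ that tends to zero, the remaining factors being absorbed into a constant via the uniform a priori bounds. The same sewing/Young-type bookkeeping that gave (\ref{estimate of delta}) then yields $|(I^n-I)_{s,t}-(\Xi^n-\Xi)_{s,t}|\le \ep_n\omega(s,t)^{\beta+(\gamma-1)\alpha}$ with $\ep_n\to 0$. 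Together with direct bounds on $|\Xi^n_{s,t}-\Xi_{s,t}|$ (from (\ref{Xi}) and Lemma~\ref{estimate of phi and w}), this gives $\|I^n-I\|_\beta\to 0$, and via $R^{I^n}_{s,t}-R^I_{s,t}=(I^n-I)_{s,t}-(\sigma(Y^n_s)-\sigma(Y_s))X_{s,t}$ combined with (a), $\|R^{I^n}-R^I\|_{2\alpha}\to 0$ (all relevant exponents $2\beta$, $\ta+\beta$, $\beta+(\gamma-1)\alpha$ exceed $2\alpha$ under the standing hypotheses).

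For (a), the uniform controlled-path bounds yield a uniform bound on $\|Y_n\|_{\alpha''}$, and hence on $\|\sigma(Y_n)-\sigma(Y)\|_{\alpha''}$, with $\alpha''=\min(\beta,2\alpha,\ta)>\alpha$, while $\|Y_n-Y\|_\infty\to 0$ yields $\|\sigma(Y_n)-\sigma(Y)\|_\infty\to 0$; a standard splitting of the sup in $\|\cdot\|_{\alpha,[s,t]}$ into the regimes $\omega(s,t)\ge\delta$ (use uniform convergence) and $\omega(s,t)\le\delta$ (use the factor $\omega(s,t)^{\alpha''-\alpha}$) then gives $\|\sigma(Y_n)-\sigma(Y)\|_\alpha\to 0$. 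For (c), step (b) and $\beta'<\beta$ give $\|I^n-I\|_{\beta'}\to 0$, so by Assumption~\ref{assumption on L}(2), $\|L(\xi+I^n)-L(\xi+I)\|_{\infty\hyp var}\to 0$. Uniform controlled-path bounds on $I^n$ produce uniform estimates of $\|I^n\|_{1/\beta\hyp var,[s,t]}$ and $\|I^n\|_{\infty\hyp var,[s,t]}$ in terms of $\omega(s,t)^\beta$, so Assumption~\ref{assumption on L}(3) gives $\|L(\xi+I^n)\|_{1\hyp var,[s,t]}\le C\omega(s,t)^\beta$ uniformly. Lemma~\ref{comparison of norms}(1) then bounds
\[
\|L(\xi+I^n)-L(\xi+I)\|_{q\hyp var,[s,t]} \le \|L(\xi+I^n)-L(\xi+I)\|_{1\hyp var,[s,t]}^{1/q}\|L(\xi+I^n)-L(\xi+I)\|_{\infty\hyp var,[s,t]}^{(q-1)/q},
\]
and dividing by $\omega(s,t)^{\ta}$ and splitting into $\omega(s,t)\ge\delta$ (uniform $\infty\hyp var$ convergence dominates) and $\omega(s,t)\le\delta$ (the factor $(2C)^{1/q}\omega(s,t)^{\beta/q-\ta}$ is small since $q<\beta/\ta$) gives $\|L(\xi+I^n)-L(\xi+I)\|_{q\hyp var,\ta}\to 0$.

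The main obstacle is step (c): Assumption~\ref{assumption on L}(2) only guarantees continuity of $L$ into the $\|\cdot\|_{\infty\hyp var}$ topology, while the target of $\mathcal{M}$ demands convergence in the much stronger $\C^{q\hyp var,\ta}$ topology, and the interpolation above is precisely what bridges this gap. The conditions $q<\beta/\ta$ and $1/p+1/q\ge 1$ in (\ref{condition on qalphata}) are exactly what make everything work: the first underlies the $q$-variation-to-$\ta$-H\"older interpolation in (c), the second supplies the Young-integral ingredients needed to define and estimate $\int_s^t \Phi^n_{s,r}\otimes dX_r$ in step (b).
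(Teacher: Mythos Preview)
Your treatment of parts (a) and (c) is sound and essentially parallels the paper: for (a) the paper writes out the increment of $\sigma(Y)-\sigma(\tilde Y)$ directly rather than interpolating, and for (c) it uses the same $q<\beta/\ta$ interpolation between the $1$-variation and sup norms.

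There is, however, a real gap in your step~(b). Your key claim is that after adding and subtracting, \emph{every} term of $\delta(\Xi^n-\Xi)_{s,u,t}$ carries a vanishing factor while the remaining factors still reproduce the exponent $\beta+(\gamma-1)\alpha$. This fails for the terms in (\ref{explicit form of delta}) that involve \emph{second differences} of $D\sigma$, e.g.\ the part of the third line giving
\[
\bigl[(D_1\sigma)(Y^n_s)-(D_1\sigma)(Y_s)-(D_1\sigma)(Y^n_u)+(D_1\sigma)(Y_u)\bigr]\,Z'_u\,\XX_{u,t}.
\]
Since $D\sigma$ is only $(\gamma-2)$-H\"older (recall $\gamma\le 3$, so no Lipschitz control), the bracket can be bounded either by $C\|Y^n-Y\|_\infty^{\gamma-2}$ (small, but with no $\omega(s,u)$ gain) or by $C\omega(s,u)^{(\gamma-2)\alpha}$ (right exponent, but no smallness) --- not both. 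Using the small bound leaves only the exponent $2\beta$, and the analogous issue in the second line of (\ref{explicit form of delta}) leaves only $\ta+\beta$; under (\ref{condition on qalphata}) neither of these need exceed $1$ (take e.g.\ $\beta=0.45$, $p=2.3$, $\alpha=0.44$, $\ta=0.445$), so the sewing estimate you invoke is not available. A fix is to interpolate: bound the second difference by
\[
\bigl(C\|Y^n-Y\|_\infty^{\gamma-2}\bigr)^{1-\lambda}\bigl(C\omega(s,u)^{(\gamma-2)\alpha}\bigr)^{\lambda}
\]
and choose $\lambda<1$ close enough to $1$ that the total exponent exceeds $1$, retaining a (smaller) vanishing prefactor $\|Y^n-Y\|_\infty^{(\gamma-2)(1-\lambda)}$.

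The paper sidesteps this entirely by a different device. Instead of sewing the difference, it fixes a dyadic partition $\mathcal P_N$ and writes
\[
(I^n-\Xi^n)_{s,t}-(I-\Xi)_{s,t}
=\Bigl[\textstyle\sum_{\mathcal P_N}\Xi^n-\Xi^n_{s,t}-\sum_{\mathcal P_N}\Xi+\Xi_{s,t}\Bigr]
+\Bigl[I^n_{s,t}-\textstyle\sum_{\mathcal P_N}\Xi^n\Bigr]
-\Bigl[I_{s,t}-\textstyle\sum_{\mathcal P_N}\Xi\Bigr].
\]
The two tail terms are bounded via (\ref{approximation of integral}) \emph{uniformly} on bounded sets of $\W_{T,\alpha,\ta,q,\xi,\eta}$ by $C\max_{[u,v]\in\mathcal P_N}\omega(u,v)^{(\gamma-1)\alpha+\beta-1}\omega(s,t)$, hence are small in $\C^{2\alpha}$ once $N$ is large, independently of $n$. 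The first bracket is then a \emph{finite} sum of $\delta\Xi^n-\delta\Xi$ terms, each of which is continuous in $((Z,Z'),\Phi)$ by inspection of (\ref{explicit form of delta}). This three-$\varepsilon$ argument requires no quantitative control of second differences of $D\sigma$ and is what your sketch is missing.
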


We already proved the compactness of the
inclusion $\C^{q'\hyp var,\theta'}\subset \C^{q\hyp var,\theta}$,
where $1\le q'<q,\, q\theta\le q'\theta'$.
We need the following compactness result also.

\begin{lem}\label{compact criterion}
 Let $\frac{1}{3}<\theta<\theta'\le \frac{1}{2}$.
Then ${\mathscr D}_X^{2\theta'}\subset {\mathscr D}^{2\theta}_X$
and the inclusion is compact.
\end{lem}

\begin{proof}[Proof of Lemma~$\ref{compact criterion}$]
Suppose 
\begin{align}
 \sup_n\|(Z(n),Z(n)')\|_{\theta'}=
\sup_n\{|Z(n)_0|+|Z(n)'_0|+
\|Z(n)'\|_{\theta'}+\|R^{Z(n)}\|_{2\theta'}\}<\infty.
\end{align}
This implies $\{Z(n)'\}$ is bounded and equicontinuous.
Since $Z(n)_t-Z(n)_s=Z(n)'_sX_{s,t}+R^{Z(n)}_{s,t}$,
$\{Z(n)\}$ is also bounded and equicontinuous.
Hence certain subsequence $\{Z(n_k), Z(n_k)'\}$
converges uniformly.
This implies
$\{(Z(n_k)',R^{Z(n_k)})\}$ converges in ${\mathscr D}^{2\theta}_X$.
\end{proof}

\begin{proof}[Proof of Lemma~$\ref{continuity}$]
First note that 
\begin{align}
 {\cal M}(\W_{T,\alpha,\ta,q,\xi,\eta})
\subset \W_{T,\alpha,\ta,q,\xi,\eta}.\label{invariant set}
\end{align}
$
(\xi+I(Z,\Psi),\sigma(Y))\in
\mathscr{D}^{2\alpha}_X
$ 
follows from Lemma~\ref{estimate of rough integral}.
By Assumption~\ref{assumption on A},
we have
\begin{align*}
 \|A(\xi+I(Z,\Psi))\|_{q\hyp var,[s,t]}&\le
 \|A(\xi+I(Z,\Psi))\|_{1\hyp var,[s,t]}\\
&\le F\left(\|I(Z,\Psi))\|_{1/\beta\hyp var,[s,t]}\right)
\|I(Z,\Phi)\|_{\infty\hyp var, [s,t]}\\
&\le F\left(\|I(Z,\Psi)\|_{\beta}\omega(s,t)^{\beta}\right)
\|I(Z,\Psi)\|_{\beta}\omega(s,t)^{\beta},
\end{align*}
which shows
\begin{align}
 \|A(\xi+I(Z,\Psi))\|_{q\hyp var,\ta}&\le
F\left(\|I(Z,\Psi)\|_{\beta}\omega(0,T)^{\beta}\right)
\|I(Z,\Psi)\|_{\beta}\omega(0,T)^{\beta-\ta}<\infty.
\label{estimate of A(I)}
\end{align}
Thus we have proved (\ref{invariant set}).
We estimate $\|I(Z,\Psi)'-I(\tZ,\tPsi)'\|_{\alpha}$.
We have
\begin{align}
\lefteqn{\left|\left(\sigma(Y_t)-\sigma(\tY_t)\right)-
\left(\sigma(Y_s)-\sigma(\tY_s)\right)\right|}\nn\\
&=\int_0^1
\Bigl\{
(D\sigma)(Y_s+\theta Y_{s,t})(Y_{s,t})-
(D\sigma)(\tY_s+\theta \tY_{s,t})(\tY_{s,t})
\Bigr\}\nn\\
&\le
\|D\sigma\|_{\infty}|Y_{s,t}-\tY_{s,t}|+
\|D\sigma\|_{\gamma-2}2^{\gamma-2}
\left(|Y_s-\tY_s|^{\gamma-2}+
|Y_{s,t}-\tY_{s,t}|^{\gamma-2}
\right)|Y_{s,t}|\nn\\
&\le\|D\sigma\|_{\infty}
\left(\|Z'-\tZ'\|_{\alpha}\omega(0,s)^{\alpha}\|X\|_{\beta}\omega(s,t)^{\beta}+
\|R^Z-R^{\tZ}\|_{2\alpha}\omega(s,t)^{2\alpha}+
\|\Psi-\tPsi\|_{q\hyp var,\ta}\omega(s,t)^{\ta}\right)\nn\\
&\quad+2^{\gamma-2}\|D\sigma\|_{\gamma-2}
\Bigl\{\Bigl(
 \|R^Z-R^{\tZ}\|_{2\alpha}\omega(0,s)^{2\alpha}
 +
\|\Psi-\tPsi\|_{q\hyp var,\ta}\omega(0,s)^{\ta}\Bigr)^{\gamma-2}\nn\\
&\quad+
\Bigl(
\|Z'-\tZ'\|_{\alpha}\omega(0,s)^{\alpha}\|X\|_{\beta}\omega(s,t)^{\beta}+
\|R^Z-R^{\tZ}\|_{2\alpha}\omega(s,t)^{2\alpha}+
\|\Psi-\tPsi\|_{q\hyp var,\ta}\omega(s,t)^{\ta}
\Bigr)^{\gamma-2}
\Bigr\}
\nn\\
&\quad\quad \times
\left((|\sigma(\xi)|+\|Z'\|_{\alpha}\omega(0,s)^{\alpha})
\|X\|_{\beta}\omega(s,t)^{\beta}+
\|R^Z\|_{2\alpha}\omega(s,t)^{2\alpha}+
\|\Psi\|_{q\hyp var,\ta}\omega(s,t)^{\ta}\right).
\end{align}
Since $\beta>\ta>\alpha$,
this shows the continuity of
the mapping $((Z,Z'),\Psi)\mapsto 
I(Z,\Psi)'$.

We next estimate 
$\|R^{I(Z,\Psi)}-R^{I(\tZ,\tPsi)}\|_{2\alpha}$.

\begin{align}
|R^{I(Z,\Psi)}_{s,t}-R^{I(\tZ,\tPsi)}_{s,t}|
&=\left|\left(
I(Z,\Psi)_{s,t}-\sigma(Y_s)X_{s,t}\right)-
\left(I(\tZ,\tPsi)_{s,t}-\sigma(\tY_s)X_{s,t}\right)
\right|\nn\\
&\le\left|\left(
I(Z,\Psi)_{s,t}-\Xi(Z,\Psi)_{s,t}\right)-
\left(I(\tZ,\tPsi)_{s,t}-\Xi(\tZ,\tPsi)_{s,t}\right)
\right|\nn\\
&\quad +
\left|(D_1\sigma)(Y_s)(Z'_s\XX_{s,t})-
(D_1\sigma)(\tY_s)(\tZ '_s\XX_{s,t})\right|\nn\\
&\quad +
\left|(D_2\sigma)(Y_s)\left(\int_s^t\Psi_{s,u}\otimes d\mathbf{X}_u\right)
-(D_2\sigma)(\tY_s)\left(\int_s^t\tPsi_{s,u}\otimes
 d\mathbf{X}_u\right)\right|.
\end{align}
 
  We argue in a similar way to the sewing lemma
 for the estimate of the first term.
Let ${\cal P}_N=\{t^N_k=s+\frac{k(t-s)}{2^N}\}$ 
be a usual dyadic partition of $[s,t]$.
 We have
\begin{align}
 \lefteqn{\left|\left(
I(Z,\Psi)_{s,t}-\Xi(Z,\Psi)_{s,t}\right)-
\left(I(\tZ,\tPsi)_{s,t}-\Xi(\tZ,\tPsi)_{s,t}\right)
\right|}\nn\\
&\le
\left|\left(\sum_{{\cal P}_N}\Xi(Z,\Psi)_{u,v}-
\Xi(Z,\Psi)_{s,t}\right)-
\left(\sum_{{\cal P}_N}\Xi(\tZ,\tPsi)_{u,v}-
\Xi(\tZ,\tPsi)_{s,t}\right)
\right|\nn\\
&\quad +
\left|\left(I(Z,\Psi)_{s,t}-
\sum_{{\cal P}_N}\Xi(Z,\Psi)_{u,v}\right)\right|\nn\\
&\quad +\left|\left(I(\tZ,\tPsi)_{s,t}-
\sum_{{\cal P}_N}\Xi(\tZ,\tPsi)_{u,v}\right)\right|.
\end{align}
By (\ref{approximation of integral}),
\begin{align}
\lefteqn{
\left|\left(I(Z,\Psi)_{s,t}-
\sum_{{\cal P}_N}\Xi(Z,\Psi)_{u,v}\right)\right|
+\left|\left(I(\tZ,\tPsi)_{s,t}-
\sum_{{\cal P}_N}\Xi(\tZ,\tPsi)_{u,v}\right)\right|}
\nn\\
&\le K
\left\{
f(\|R^Z\|_{2\alpha},\|Z'\|_{\alpha},\|Z'\|_{\infty},
\|\Psi\|_{q\hyp var,\ta})+
f(\|R^{\tZ}\|_{2\alpha},\|{\tZ}'\|_{\alpha},\|{\tZ}'\|_{\infty},
\|\Psi'\|_{q\hyp var,\ta})
\right\}\nn\\
&\quad\qquad\qquad
\times (1+\omega(s,t)^{1/2})
\wopnorm
\max_{{\cal P}_N}\omega(u,v)^{(\gamma-1)\alpha+\beta-1}K(T)
\omega(s,t).
\end{align}
Hence this term is small in the $\omega$-H\"older space
$\C^{2\alpha}$ on a bounded set of
$\W_{T,\alpha,\ta,q,\xi,\eta}$
if $N$ is large.
We fix a partition so that this term is small.
Although the partition number may be big,
\begin{align}
\lefteqn{\left(
\sum_{{\cal P}_N}\Xi(Z,\Psi)_{u,v}
-\Xi(Z,\Psi)_{s,t}\right)-
\left(\sum_{{\cal P}_N}\Xi(\tZ,\tPsi)_{u,v}
-\Xi(\tZ,\tPsi)_{s,t}\right)}\nn\\
&=
\sum_{k=0}^N\sum_{{\cal P}_k}
\left(\delta\Xi(Z,\Psi)_{u,(u+v)/2,v}
-\delta\Xi(\tZ,\tPsi)_{u,(u+v)/2,v}\right)
\end{align}
is a finite sum,
and by the explicit form of $\delta\Xi$ as in 
(\ref{explicit form of delta}), we see that this difference 
is small in $\C^{2\alpha}$ if
$((Z,Z'), \Psi)$ and $((\tZ,\tZ'), \tPsi)$ are sufficiently close in 
$\W_{T,\alpha,\ta,q,\xi,\eta}$.
The estimate of the second and the 
third terms are similar to the above
and we obtain the continuity 
of the mapping
\begin{align}
 ((Z,Z'),\Psi)(\in \W_{T,\alpha,\ta,q,\xi,\eta})
\mapsto \left(\xi+I(Z,\Psi),\sigma(Y)\right)(\in 
{\mathscr D}^{2\alpha}_X).\label{continuity of I}
\end{align}
We next prove the continuity of the mapping
\begin{align}
 ((Z,Z'),\Psi)(\in \W_{T,\alpha,\ta,q,\xi,\eta})
\mapsto A(\xi+I(Z,\Psi))(\in \C^{q\hyp var,\ta}).
\end{align}

Since we choose $\zb\le \alpha$,
it suffices to apply Lemma~\ref{lemma for A}
 (2) to the case where
 $x=\xi+I(Z,\Psi)$ and $x'=\xi+I(\tZ,\tPsi)$
 because of Lemma~\ref{estimate of rough integral} (2) and
 the continuity (\ref{continuity of I}).
\end{proof}

By using the above lemmas, we prove the existence of solutions 
on small interval $[0,T']$.
 Since the interval can be chosen independent of
the initial condition, we obtain the global existence
of solutions and the estimate for solutions.
We consider balls with radius $1$ centered at
$\left((\xi+\sigma(\xi,\eta)X_{t}, \sigma(\xi,\eta)), \eta\right)$
\, $(0\le t\le T')$,
\begin{align}
\B_{T',\theta_1,\theta_2,q}=\left\{
((Z,Z'), \Psi)\in \W_{T',\theta_1,\theta_2,q,\xi,\eta}\,|\,
\|Z'\|_{\theta_1,[0,T']}+
\|R^Z\|_{2\theta_1,[0,T']}+
\|\Psi\|_{q\hyp var,\theta_2, [0,T']}\le 1
\right\}.
\end{align}

\begin{lem}[Invariance and compactness]
\label{invariance and compactness}
Assume $(\ref{condition on qalphata})$ and
let $\alpha<\ua<\ta<\oa<\beta$.
Also we choose $q'>1$ such that $\displaystyle{\frac{\ta}{\oa}q<q'<q}$.
\begin{enumerate}
 \item[$(1)$] For sufficiently small $T'$,
we have
\begin{align}
{\cal M}(\B_{T',\alpha,\ta,q})\subset
\B_{T',\ua,\oa,q'}\subset \B_{T',\alpha,\ta,q}.
\end{align}
Moreover $T'$ does not depend on $\xi$.
\item[$(2)$] $\B_{T',\ua,\oa,q'}$ is a compact subset of
$\B_{T',\alpha,\ta,q}$.
\end{enumerate}
\end{lem}

\begin{proof}
(1)~
The second inclusion is immediate because 
$\omega(0,T')\le 1$ and the definition of the norms.
We prove the first inclusion.
Let $((Z,Z'),\Psi)\in \B_{T',\alpha,\ta,q}$.
Recall that $I(Z,\Psi)'_t=\sigma(Z_t,\Psi_t)$ and note that
$\|Z'\|_{\infty,[0,T']}\le \|\sigma\|_{\infty}
+\|Z'\|_{\alpha}\omega(0,T')^{\alpha}$.
From Lemma~\ref{estimate of rough integral} (4), we have
\begin{align*}
\|I(Z,\Psi)'\|_{\ua,[0,T']}
&\le
\|D\sigma\|_{\infty}
\Bigl\{\|Z'\|_{\infty,[0,T']}
\|X\|_{\beta}
\omega(0,T')^{\beta-\ua}
+\|R^Z\|_{2\alpha,[0,T']}\omega(0,T')^{2\alpha-\ua}\\
&\qquad\qquad\qquad+
\|\Psi\|_{q\hyp var,\ta,[0,T']}\omega(0,T')^{\ta-\ua}\Bigr\}\\
&\le\|D\sigma\|_{\infty}
\Bigl\{
\left(\|\sigma\|_{\infty}+1\right)\|X\|_{\beta}
+2
\Bigr\}\omega(0,T')^{\ta-\ua}
\end{align*}
 We next estimate $R^{I(Z,\Psi)}$.
 Let $0<s<t<T'$.
By Lemma~\ref{estimate of rough integral} (3), we have
\begin{align*}
&\|R^{I(Z,\Psi)}\|_{2\ua,[0,T']}\nn\\
&\le
\|D\sigma\|_{\infty}\|Z'\|_{\infty,[0,T']}
\|\XX\|_{2\beta}\omega(0,T')^{2(\beta-\ua)}\nn\\
&\quad +
C_{\beta,q}
\|D\sigma\|_{\infty}\|\Psi\|_{q\hyp var,\ta,[0,T']}\|X\|_{\beta}
\omega(0,T')^{\ta+\beta-2\ua}\nn\\
&\quad +
2Kf\left(\|R^Z\|_{2\alpha,[0,T']},
\|Z'\|_{\alpha,[0,T']}, \|Z'\|_{\infty,[0,T']},
\|\Psi\|_{q\hyp var,\ta,[0,T']}\right)
\wopnorm
\omega(0,T')^{\gamma\alpha+\beta-\alpha-2\ua}\\
&\le
\Bigl\{
\|D\sigma\|_{\infty}\left(\|\sigma\|_{\infty}+1\right)
\|\mathbb{X}\|_{2\beta}
+C_{\beta,q}\|D\sigma\|_{\infty}\|X\|_{\beta}+
2Kf(1,1,\|\sigma\|_{\infty}+1,1)
\wopnorm
\Bigr\}\\
&\qquad \times \omega(0,T')^{2(\ta-\ua)}.
\end{align*}
We turn to the estimate of $A(\xi+I(Z,\Psi))$.
By (\ref{estimate of A(I)}) and Lemma~\ref{estimate of rough integral} (2),
we have
\begin{align*}
&\|A(\xi+I(Z,\Psi))\|_{q'\hyp var,\oa,[0,T']}\\
&\le
F\left(\|I(Z,\Psi)\|_{\beta,[0,T']}\omega(0,T')^{\beta}\right)
\|I(Z,\Psi)\|_{\beta,[0,T']}
\omega(0,T')^{\beta-\oa}\nn\\
&\le F\left((1+2g(1,1,\|\sigma\|_{\infty}+1,1))K\wopnorm\right)
(1+2g(1,1,\|\sigma\|_{\infty}+1,1))K\wopnorm
\omega(0,T')^{\beta-\oa}.
\end{align*}
Thus, noting Lemma~\ref{lemma for A} (1), 
there exists a positive number $K'$ which depends on
$K$, $\|\sigma\|_{\infty}$, $\|D\sigma\|_{\infty}$, $f$, $g$ and
a positive number $\kappa_0$ which depends on
$\beta-\ua$ and $\ta-\ua$ such that
if $\omega(0,T')\le
\{K'(1+\wopnorm)\}^{-\kappa_0}$, then
$
{\cal M}(\B_{T',\alpha,\ta,q})\subset
\B_{T',\ua,\oa,q'}
$
holds.
This completes the proof.

\noindent
(2)~This follows from Lemma~\ref{comparison of norms} (2)
and Lemma~\ref{compact criterion}.
 \end{proof}

We are in a position to prove our main theorem.

 \begin{proof}[Proof of Theorem~$\ref{main theorem}$]
(1)
Let us take $\alpha, \ta, p, q, \ua, \oa$  as in 
Lemma~\ref{invariance and compactness}.
By Lemma~\ref{continuity} and
Lemma~\ref{invariance and compactness}, applying
Schauder's fixed point theorem,
we obtain a fixed point for small interval $[0,T']$
  if
$\omega(0,T')\le \{K(1+\wopnorm)\}^{-\kappa_0}$,
where $K$ is a certain positive constant.
That is, there exists a solution on $[0,T']$.
We now consider the equation on $[T',T]$.
We can rewrite the equation as
\begin{align}
 Z_{T'+t}&=Z_{T'}+\int_{T'}^{T'+t}
\sigma(Z_u,\Psi_u)d\mathbf{X}_u\qquad 0\le t\le T-T',
\label{Z equation'}\\
\Psi_{T'+t}&=A\left(\xi+
\int_0^{\cdot}\sigma(Z_u,\Psi_u)d\mathbf{X}_u\right)_{T'+t}
\qquad 0\le t\le T-T'.\label{Psi equation'}
\end{align}
Let $\omega_{T'}(s,t)=\omega(T'+s,T'+t)$
  for $0\le s<t\le T-T'$.
  We see that
  $\tilde{Z}_t:=Z_{T'+t}$ and $\tPsi_{t}:=\Psi_{T'+t}$
 $(0\le t\le T-T')$ is a solution to
 \begin{align}
 \tilde{Z}_{t}&=\tilde{Z}_0+\int_{0}^{t}
\sigma(\tilde{Z}_u,\tilde{\Psi}_u)d\mathbf{X}_{T'+u}\qquad 0\le t\le T-T',\\
  \tilde{\Psi}_{t}&=\tilde{A}_{y,T'}\left(
  \int_0^{\cdot}\sigma(\tilde{Z}_u,\tilde{\Psi}_u)d\mathbf{X}_{T'+u}\right)_{t}
\qquad 0\le t\le T-T'.
 \end{align}
 where
\begin{align*}
 y_t&=\xi+\int_0^t\sigma(Z_u,\Psi_u)d\mathbf{X}_u, \quad 0\le t\le T'.
\end{align*}
Note that we already defined
$ \tilde{A}_{y,T'}(x)_t$ $(0\le t\le T-T')$
$(x\in \C^{\beta}([0,T-T'],\RR^n
  ~|~ x_0=Z_{T'}, \omega_{T'}))$
in Lemma~\ref{property of A} (4).

Thanks to Lemma~\ref{property of A}, we can do the same argument as
 $[0,T']$ for small interval.
 By iterating this procedure finite time, say, $N$-times,
we obtain 
a controlled path $(Z_t,Z_t')$\, $(0\le t\le T)$.
This is a solution to (\ref{path-rde1}).
  Clearly,
  \begin{align}
   N\le 1+
   \omega(0,T)\{K(1+\wopnorm)\}^{\kappa_0}
   \label{N}
   \end{align}
  We need to show $(Z,\Psi)\in \W_{T,\beta,\beta,1,\xi,\eta}$
and its estimate with respect to the norm
$\|\cdot\|_{\beta}$.
We give the estimate of the solution on $[0,T']$.
The solution $(Z,Z')$ which we obtained satisfies
\begin{align}
\|Z'\|_{\alpha,[0,T']}+\|R^Z\|_{\alpha,[0,T']}+\|\Psi\|_{q\hyp var,\ta,[0,T']}
\le 1.\label{norm 1}
\end{align}
Let $0\le u\le v\le T'$.
From (\ref{norm 1}), (\ref{beta Holder continuity}) and (\ref{Z equation}),
we have
\begin{align}
 \|Z\|_{\beta,[u,v]}&\le
K\wopnorm.\label{estimate of Z}
\end{align}
Second, by (\ref{estimate of Ax}) and
(\ref{estimate of Z}),
we have
\begin{align}
\|A(Z)\|_{1\hyp var,[u,v]}&\le
F(K\wopnorm)
K\wopnorm\omega(u,v)^{\beta}.\label{estimate of AZ}
\end{align}
Therefore $Z$ and $A(Z)$ are $(\omega,\beta)$-H\"older continuous paths.
Hence, we have
$
 \|Z'\|_{\beta}\le K\wopnorm.
$
Moreover, we can apply Lemma~\ref{estimate of rough integral} to 
$Z$ and $\Phi=A(Z)$ in the case
where $\alpha=\ta=\beta$ and $q=1$.
Thus, by substituting the estimates (\ref{estimate of Z}) and
(\ref{estimate of AZ}) for (\ref{RZ}),
we obtain for $0\le u\le v\le T'$,
$
|R^Z_{u,v}|\le
K\wopnorm
\omega(u,v)^{2\beta}.
$
These local estimates hold on other small intervals.
   By the estimate (\ref{N}),
   we obtain the desired estimate.

\noindent
(2) 
Let $(Z,Z')\in \mathscr{D}^{2\beta}_X(\RR^n)$ be a solution of
(\ref{path-rde1}).
Let $\zb<\tb<\beta$.
The constants $K, \kappa_1,\kappa_2,\kappa_3$ 
which will appear in the calculation below
depend only on $\sigma$ and $F$
and may change line by line.
As we already noted in Remark~\ref{remark on estimate of rough integral} (4),
Lemma~\ref{estimate of rough integral} still holds replacing
$\beta$ by $\tb$.
We take $0<\tau\le T$ so that
$\omega(0,\tau)\le 1$.
Using $\|X\|_{\tb,[0,\tau]}\le \|X\|_{\beta,[0,\tau]}$ and
$\|\XX\|_{\tb,[0,\tau]}\le \|\XX\|_{\beta,[0,\tau]}$ which follows from
$\omega(0,\tau)\le 1$, we have
\begin{align}
 \|Z\|_{\tb,[0,\tau]}\le \|\sigma\|_{\infty}
\|X\|_{\beta,[0,\tau]}+\|R^Z\|_{2\tb,[0,\tau]}\omega(0,\tau)^{\tb}.
\label{estimate of Ztau}
\end{align}
By Lemma~\ref{lemma for A} (1),
we have
\begin{align}
 &\|A(Z)\|_{1\hyp var, [s,t]}\nonumber\\
&\le
C\left(\|Z\|_{1/\zb\hyp var, [s,t]}^{1/\zb}+1\right)
\|Z\|_{\infty\hyp var, [s,t]}\nonumber\\
&\le C\left\{\left(\|\sigma\|_{\infty}\|X\|_{\beta,[0,\tau]}+
\|R^Z\|_{2\tb,[0,\tau]}\omega(0,\tau)^{\tb}\right)^{1/\zb}
+1\right\}
\|Z\|_{\tb, [s,t]}\omega(s,t)^{\tb},
\end{align}
which implies
\begin{align}
&\|A(Z)\|_{1\hyp var, \tb, [0,\tau]}\nonumber\\
&\le
K\left(\|X\|_{\beta,[0,\tau]}^{1/\zb}+\|R^Z\|_{2\tb,[0,\tau]}^{1/\zb}
\omega(0,\tau)^{\tb/\zb}+1\right)
\left(\|X\|_{\beta,[0,\tau]}+\|R^Z\|_{2\tb, [0,\tau]}
\omega(0,\tau)^{\tb}\right)\nonumber\\
&\le
K\Biggl\{\|X\|_{\beta,[0,\tau]}+\|X\|_{\beta,[0,\tau]}^2
+\|X\|_{\beta,[0,\tau]}^{2/\zb}+
\|R^Z\|_{2\tb, [0,\tau]}\omega(0,\tau)^{\tb}\nonumber\\
&\qquad+\left(\|R^Z\|_{2\tb, [0,\tau]}\omega(0,\tau)^{\tb}\right)^2
+\left(\|R^Z\|_{2\tb,[0,\tau]}^{1/\zb}\omega(0,\tau)^{\tb/\zb}\right)^2
\Biggr\}
.\label{estimate of AZtau}
\end{align}
By (\ref{estimate of Ztau}) and (\ref{estimate of AZtau}), we obtain
\begin{align}
 \|Z'\|_{\tb,[0,\tau]}
&=\|\sigma(Z,A(Z))\|_{\tb,[0,\tau]}\nonumber\\
&\le 
K\Biggl\{\|X\|_{\beta,[0,\tau]}+\|X\|_{\beta,[0,\tau]}^2
+\|X\|_{\beta,[0,\tau]}^{2/\zb}+
\|R^Z\|_{2\tb, [0,\tau]}\omega(0,\tau)^{\tb}\nonumber\\
&\qquad+\left(\|R^Z\|_{2\tb, [0,\tau]}\omega(0,\tau)^{\tb}\right)^2
+\left(\|R^Z\|_{2\tb,[0,\tau]}^{1/\zb}\omega(0,\tau)^{\tb/\zb}\right)^2
\Biggr\}
\label{estimate of Zdashtau}
\end{align}
We apply Lemma~\ref{estimate of rough integral} (3)
to the estimate of $R^Z$
in the case where $\Psi=A(Z)$, $q=1$ and $\alpha=\ta=\tb$.
By combining the estimates obtained above, we see that there exist
$\kappa_1>0, \kappa_2>1,\kappa_3>0$ and $K>0$ which can be taken
independent of $\tb$ such that
\begin{align}
 \|R^Z\|_{2\tb,[0,\tau]}&\le
K\Biggl\{\wopnorm^{\kappa_1}+
\|R^Z\|_{2\tb,[0,\tau]}\omega(0,\tau)^{\kappa_3}
+\left(\|R^Z\|_{2\tb,[0,\tau]}\omega(0,\tau)^{\kappa_3}
\right)^{\kappa_2}\nonumber\\
&\quad+
\|Z'\|_{\tb,[0,\tau]}\omega(0,\tau)^{\kappa_3}
+\left(\|Z'\|_{\tb,[0,\tau]}\omega(0,\tau)^{\kappa_3}\right)^{\kappa_2}
\Biggr\}.\label{estimate of RZtau}
\end{align}
Let
$
 z_{\tb,\tau}=\|Z'\|_{\tb,[0,\tau]}+\|R^Z\|_{2\tb,[0,\tau]}.
$
Then using (\ref{estimate of Zdashtau}) and (\ref{estimate of RZtau}),
we see that there exist (possibly different) 
$\kappa_1\ge 1, \kappa_2>1, \kappa_3>0, K>0$
which can be taken independent of $\tb$
such that
\begin{align}
 z_{\tb,\tau}&\le K\left\{\wopnorm^{\kappa_1}
+\omega(0,\tau)^{\kappa_3}\left(z_{\tb,\tau}
+z_{\tb,\tau}^{\kappa_2}\right)\right\},\quad
\text{for all $\tau$ with $\omega(0,\tau)\le 1$}.\label{inequality of z}
\end{align}
Since $\tb<\beta$, the function $\tau\mapsto z_{\tb,\tau}$ $(0\le \tau\le 1)$
is an increasing continuous function and 
$\lim_{\tau\to +0}z_{\tb, \tau}=0$.
If $\wopnorm=0$, then by the definition,
$Z_t=\xi$ for all $0\le t\le T$
and $\|Z'\|_{\beta}=\|R^Z\|_{2\beta}=0$ hold.
The desired estimate holds.
So we assume $\wopnorm\ne 0$.
We now define
\begin{align*}
 \tau_{1}=\sup\left\{\tau~\Big |~\tau\le T,\,\, \omega(0,\tau)\le 1,\,\,
z_{\tb,\tau}\le 2K\wopnorm^{\kappa_1}\right\}.
\end{align*}
There are two cases $\tau_{1}=T$ and
$\tau_{1}<T$.
Suppose $\tau_{1}=T$.
Then $z_{\tb, [0,T]}\le 2K\wopnorm^{\kappa_1}$ holds.
If this is not the case, 
$z_{\tb, \tau_{1}}=2K\wopnorm^{\kappa_1}$ holds.
Hence by the inequality (\ref{inequality of z}), we get
\begin{align}
 \omega(0,\tau_{1})&\ge
\left(\frac{K\wopnorm^{\kappa_1}}{2K\wopnorm^{\kappa_1}
+\left(2K\wopnorm^{\kappa_1}\right)^{\kappa_2}}\right)^{1/\kappa_3}.
\label{omegatau1}
\end{align}
After establishing this estimate, we proceed in a similar way 
to the argument in the proof of (1) replacing
$T'$ by $\tau_1$.
In this way,
we obtain an increasing time sequence 
$
 0=\tau_0<\tau_1<\cdots<\tau_{N-1}<\tau_N=T
$
 $(N\ge 2)$ and the estimate (\ref{omegatau1}) hold
for $\omega(\tau_{i-1},\tau_i)$ $(1\le i\le N-1)$.
Also we have
\begin{align}
 \|Z'\|_{\tb,[\tau_{i-1},\tau_i]}+\|R^Z\|_{2\tb,[\tau_{i-1},\tau_i]}&=
2K\wopnorm^{\kappa_1},\quad 1\le i\le N-1,\label{estimate of ztaui}\\
 \|Z'\|_{\tb,[\tau_{N-1},T]}+\|R^Z\|_{2\tb,[\tau_{N-1},T]}&\le
2K\wopnorm^{\kappa_1}.\label{estimate of zT}
\end{align}
By using $\sum_{i=1}^{N-1}\omega(\tau_{i-1},\tau_i)\le \omega(0,T)$,
we get the estimate of $N$ as follows.
\begin{align}
 N\le 
\left(2+2^{\kappa_2}K^{\kappa_2-1}\wopnorm^{\kappa_1\kappa_2-1}\right)
^{1/\kappa_3}\omega(0,T)+1.\label{estimate of N}
\end{align}
Using (\ref{estimate of ztaui}), (\ref{estimate of zT}),
(\ref{estimate of N}) and simple estimates
\begin{align*}
\|Z'\|_{\tb,[0,T]}&\le \sum_{i=1}^N\|Z'\|_{\tb,[\tau_{i-1},\tau_i]},\\
\|R^Z\|_{2\tb,[0,T]}&\le
\sum_{i=1}^N\|R^Z\|_{2\tb,[\tau_{i-1,\tau_i}]}+
\sum_{i=1}^N\sum_{j=1}^{i-1}\|Z'\|_{\tb,[\tau_{j-1},\tau_j]}
\|X\|_{\beta,[0,T]},
\end{align*}
we obtain 
\begin{align}
 \|Z'\|_{\tb,[0,T]}+\|R^Z\|_{2\tb,[0,T]}&\le
K\left\{\left(1+\wopnorm^{\kappa_1}\right)\omega(0,T)+1\right\}^2
\wopnorm^{\kappa_2}.
\end{align}
Since $\tb<\beta$ and $\|Z'\|_{\beta, [0,T]}+\|R^Z\|_{2\beta, [0,T]}<\infty$,
taking the limit $\tb\uparrow \beta$,
this estimate hold for the norms 
$\|\cdot\|_{\beta}$ and $\|\cdot\|_{2\beta}$ as well.
The estimates of $Z$ and $A(Z)$ follow from this estimate and the estimates
similar to (\ref{estimate of Ztau}) and (\ref{estimate of AZtau}).
This completes the proof.
\end{proof}

   \section{A continuity property of the solution mapping}
   \label{a continuity property}
   
 In this section, we consider the case where
$\omega(s,t)=|t-s|$.
That is, we consider
usual H\"older rough paths.
Also let us denote the set of $\beta$-H\"older geometric rough paths
($1/3<\beta\le 1/2$)
by $\mathscr{C}^{\beta}_g(\RR^d)$ which is the closure of
the set of smooth rough paths in the topology
of $\mathscr{C}^{\beta}(\RR^d)$.
In this paper, smooth rough path means the rough path ${\bf h}$
defined by a Lipschitz path $h\in \C^1$
and its iterated integral $\bar{h}^2_{s,t}=
\int_s^t(h_u-h_s)\otimes dh_u$.
We identify ${\bf h}$ and the Lipschitz path $h$.
Also we denote the set of smooth rough paths by
$\CLip$.

Let $Z({\bf h})$ be a solution to $(\ref{path-rde1})$
for $\mathbf{X}={\bf h}$.
Then $Z({\bf h})$ is a solution to the usual integral equation
 \begin{align}
  Z_t=\xi+\int_0^t\sigma(Z_s, A(Z)_s)dh_s. \label{path-ode6}
 \end{align}
As already explained, we cannot expect the uniqueness of the solution 
of the RDEs in our setting driven by general rough path $\mathbf{X}$.
However, the uniqueness hold in many cases 
when the driving rough path is a smooth rough path and
$\sigma$ is sufficiently smooth.
If the solution to the ODE 
$(\ref{path-ode6})$ is unique,
 then $Z({\bf h})$ is uniquely defined and
 $(Z({\bf h}),R^{Z({\bf h})},A(Z({\bf h})))$
 satisfies the same estimate as in
 Theorem~\ref{main theorem}.
 We use the notation $Z(h)_t$ instead of $Z({\bf h})_t$
 in this case.

We denote the set of solutions $(Z,Z')$ of our RDE
(\ref{path-rde1}) by
$Sol(\mathbf{X})$.
We prove a certain continuity property of
multivalued mapping $\mathbf{X}\mapsto Z(\mathbf{X})\in Sol(\mathbf{X})$
at the rough path $\mathbf{X}$ for which the solution is unique.
Thus, this multivalued map is continuous in such a sense 
at any smooth rough path
if the uniqueness holds on the set of smooth rough paths.
 
We write
$\C^{\theta-}=\cap_{0<\ep<\theta}\C^{\theta-\ep},
\C^{1+\hyp var,\theta-}=\cap_{q>1,0<\ep<\theta}\C^{q\hyp var,\theta-\ep}$.
 Clearly, these spaces are 
 Fr\'echet spaces with the naturally defined semi-norms.
Also note that $Z(\mathbf{X})\in \C^{\beta-}([0,T],\RR^n)$.

 \begin{lem}\label{limit point}
  We consider the equation $(\ref{path-rde1})$
  and assume the same assumption on $A$ and $\sigma$
  in Theorem~$\ref{main theorem}$.
~Let $\mathbf{X}\in \mathscr{C}^{\beta}(\RR^d)$.
Let $\{\mathbf{X}_N\}\subset\mathscr{C}^{\beta}(\RR^d) $ 
be a sequence such that
$\lim_{N\to\infty}\opnorm{\mathbf{X}_N-\mathbf{X}}_{\beta}=0$.
Let us choose solutions $Z(\mathbf{X}_N)\in Sol(\mathbf{X}_N)$
$(N=1,2,\ldots)$.
Then there exists a subsequence
$N_k\uparrow \infty$ such that the limit
$Z=\lim_{k\to\infty}Z(\mathbf{X}_{N_k})$ 
exits
in 
$\C^{\beta-}([0,T], \RR^n)$.
Further for such $Z$, $(Z,\sigma(Z,A(Z)))\in Sol(\mathbf{X})$ and
$
 \lim_{k\to\infty}\left\|R^{Z(\mathbf{X}_{N_k})}-R^{Z}\right\|_{2\beta-}
=0
$
hold.
\end{lem}

\begin{proof}
By the estimate in Theorem~\ref{main theorem} (2), we can choose
$\{N_k\}$ such that $Z(\mathbf{X}_{N_k}), A(Z(\mathbf{X}_{N_k}))$ 
converges in
$\C^{\beta-}$ and $\C^{1+\hyp var, \beta-}$ respectively.
This implies
$\lim_{k\to\infty}\int_s^tA(Z(\mathbf{X}_{N_k}))_{s,r}dX_{N_k}(r)
=\int_s^tA(Z(\mathbf{X}))_{s,r}dX_r$ which shows
the limit $Z$ satisfies the inequality (\ref{davie}).\\
This proves $(Z,\sigma(Z,A(Z)))\in Sol(\mathbf{X})$.
We have
\begin{align*}
 R^{Z(\mathbf{X}_{N_k})}_{s,t}=
Z_{s,t}(\mathbf{X}_{N_k})-\sigma\left(Z_s(\mathbf{X}_{N_k})_s,
A(Z(\mathbf{X}_{N_k}))_s\right)(X_{N_k})_{s,t}.
\end{align*}
Hence $\lim_{k\to\infty}R^{Z(\mathbf{X}_{N_k})}_{s,t}=R^{Z(\mathbf{X})}_{s,t}$
for all $(s,t)$.
Combining the uniform estimates of $(\omega,2\beta)$-H\"older estimates
of them, this completes the proof.
\end{proof}

The following proposition follows from the above lemma

\begin{pro}\label{continuity of Z}
  We consider the equation $(\ref{path-rde1})$
  and assume the same assumption on $A$ and $\sigma$
  in Theorem~$\ref{main theorem}$.
Assume the solution of $(\ref{path-rde1})$ is unique for
the rough path $\mathbf{X}_0\in \mathscr{C}^{\beta}(\RR^d)$.
Then the multivalued mapping $\mathbf{X}
	     (\in {\mathscr C}^{\beta}(\RR^d))\to 
Sol(\mathbf{X})$
	     is continuous at $\mathbf{X}_0$ in the following sense.
For any $\ep>0$, there exists $\delta>0$ such that
for any $\mathbf{X}$ satisfying 
$\opnorm{\mathbf{X}-\mathbf{X}_0}_{\beta}\le \delta$ 
and any $Z(\mathbf{X})\in Sol(\mathbf{X})$,
it holds that
\begin{align*}
\|Z(\mathbf{X})-Z(\mathbf{X}_0)\|_{\beta-}+
\|R^{Z(\mathbf{X})}-R^{Z(\mathbf{X}_0)}\|_{2\beta-}
\le \ep.
\end{align*}
\end{pro}

\begin{rem}\label{Solinfty}
Let $\mathbf{X}\in \mathscr{C}^{\beta}_g(\RR^d)$.
It holds that for any sequence $\{\mathbf{h}_N\}\subset \CLip$
satisfying $\lim_{N\to}\opnorm{\mathbf{h}_N-\mathbf{X}}=0$,
any accumulation points of
$\{Z(h_{N})\}$ belong to
$Sol(\mathbf{X})$.
The set $Sol_{\infty}(\mathbf{X})$ which
consists of such all accumulation points is a subset of
$Sol(\mathbf{X})$ and may be a natural class of solutions.
However $Sol_{\infty}(\mathbf{X})=Sol(\mathbf{X})$ may hold.
\end{rem}

By a similar argument to the proof of Theorem 4.9 in 
\cite{aida-rrde}, we can prove the existence of
universally measurable selection mapping of
solutions as follows.

 \begin{pro}\label{selection theorem}
    We consider the equation $(\ref{Z equation})$ and $(\ref{Psi equation})$
  and assume the same assumption on $A$ and $\sigma$
  in Theorem~$\ref{main theorem}$.
Then there exists a universally measurable mapping 
\begin{align*}
 {\cal I} : \mathscr{C}^{\beta}_g(\RR^d)\ni \mathbf{X}
  &\mapsto
\left(
\Bigl(Z(\mathbf{X}),\sigma(Y(\mathbf{X}))\Bigr),
\Psi(\mathbf{X})
 \right)\in \C^{\beta-}\times
 \C^{\beta-}\times
 \C^{1\hyp var+,\beta-}
 \end{align*}
which satisfies the following.
\begin{enumerate}
 \item[$(1)$] $\left(Z(\mathbf{X}),\sigma(Y(\mathbf{X}))\right)\in
	      {\mathscr D}^{2\beta}_X(\RR^n)$ and
	      $\Bigl(
\left(Z(\mathbf{X}),\sigma(Y(\mathbf{X}))\right),
\Psi(\mathbf{X})
\Bigr)$ is a solution in Theorem~$\ref{main theorem}$ and
satisfies the estimate in $(\ref{estimate of Z Z' Phi})$.
\item[$(2)$] There exists a sequence of Lipschitz paths
$h_N$ such that
$\opnorm{\mathbf{X}-{\bf h}_N}_{\beta}\to 0$
and
${\cal I}({\bf h}_N)$
converges to ${\cal I}(\mathbf{X})$ in
$\C^{\beta-}(\RR^n)\times
 \C^{\beta-}({\cal L}(\RR^d,\RR^n))\times
	     \C^{1\hyp var+,\beta-}(\RR^d)$.
	     \end{enumerate}
\end{pro}

 \begin{proof}
  Below, we omit writing $\xi$.
 We consider the product space,
\begin{align}
 E={\mathscr C}^{\beta}_g(\RR^d)\times
 \C^{\beta-}(\RR^n)\times
 \C^{\beta-}({\cal L}(\RR^d,\RR^n))\times
	     \C^{1\hyp var+,\beta-}(\RR^d)
\end{align}
 and its subset
 \begin{align}
  E_0=\left\{
  \Bigl({\bf h}, Z({\bf h}), \sigma(Y({\bf h})), \Psi({\bf h})\Bigr)
  \in E~|~\mbox{{\bf h} is a smooth rough path}\right\}
 \end{align}
  Let $\bar{E}_0$ be the closure of $E_0$ in $E$.
  Then $\bar{E}_0$ is a separable closed subset of
  $E$.
  The separability follows from the continuity of the mapping
  $h\mapsto \left((Z(h), \sigma(Y(h))), \Psi(h)\right)$.
  Note that $Sol_{\infty}(\mathbf{X})$
  coincides with the projection of the subset of
  $\bar{E}_0$ whose first component is $\mathbf{X}$.
  Hence by the measurable selection theorem (See 13.2.7.\,Theorem in
  \cite{dudley}), there exists a
  universally measurable mapping
  ${\cal I} : {\mathscr C}^{\beta}_g(\RR^d)\to E
  $ such that ${\cal I}(\mathbf{X})\in \left\{\mathbf{X}\right\}\times
  Sol_{\infty}(\mathbf{X})$.
  This mapping satisfies the required properties in
  (1) and (2).
 \end{proof}

\begin{rem}
 It is not clear that we could obtain the adapted measurable solution 
mapping $\mathcal{I}$.
\end{rem}

\section{Examples}\label{examples}

\subsection{Reflected rough differential equations}\label{rrde}

Let $D$ be a connected domain in $\RR^n$.
As in \cite{saisho, lions-sznitman},
we consider the following conditions (A), (B)
on the boundary.
See also \cite{tanaka}.

\begin{defin}
We write $B(z,r)=\{y\in \RR^n\,|\, |y-z|<r\}$,
where $z\in \RR^n$, $r>0$.
The set ${\cal N}_x$
of inward unit normal vectors at the boundary
point $x\in \partial D$
is defined by
\begin{align}
 {\cal N}_x&=\cup_{r>0}{\cal N}_{x,r},\\
{\cal N}_{x,r}&=\left\{{\bm n}\in \RR^n~|~|{\bm n}|=1,
 B(x-r{\bm n},r)\cap D=\emptyset\right\}.
\end{align}

\begin{enumerate}
 \item[{\rm (A)}] 
 There exists a constant
$r_0>0$ such that
\begin{align}
{\cal N}_x={\cal N}_{x,r_0}\ne \emptyset \quad \mbox{for any}~x\in
 \partial D.\nonumber
\end{align}

\item[{\rm (B)}]
There exist constants $\delta>0$ and $0<\delta'\le 1$
satisfying:

for any $x\in\partial D$ there exists a unit vector $l_x$ such that
\begin{align}
 (l_x,{\bm n})\ge \delta'
\qquad \mbox{for any}~{\bm n}\in 
\cup_{y\in B(x,\delta)\cap \partial D}{\cal N}_y.\nonumber
\end{align}
\end{enumerate}
\end{defin}

Let us recall the Skorohod equation.
The Skorohod equation associated with a continuous path
$x\in C([0,\infty), \RR^n)$ with $x_0\in \bar{D}$
is given by
\begin{align}
 y_t&=x_t+\phi_t,\quad  y_t\in \bar{D}\qquad t\ge 0,\label{SP1}\\
\phi_t&=\int_0^t1_{\partial D}(y_s){\bm n}(s)d\|\phi\|_{1\hyp var,[0,s]}\quad
t\ge 0,\qquad
{\bm n}(s)\in {\cal N}_{y_s}~\mbox{if $y_s\in \partial D$}
\label{SP2}
\end{align}
Under the assumptions (A) and (B) on $D$, the Skorohod equation is
uniquely solved.
This is due to Saisho~\cite{saisho}.
We write
$\Gamma(x)_t=y_t$ and $L(x)_t=\phi_t$.
By the uniqueness, we have the following flow property.

\begin{lem}\label{flow property}
Assume {\rm (A)} and {\rm (B)}.
For any continuous path $x$ on $\RR^n$ with $x_0\in {\bar D}$,
we have for all $\tau, s\ge 0$
\begin{align}
\Gamma(x)_{\tau+s}&=\Gamma\left(y_s+\theta_sx\right)_{\tau},\\
L(x)_{\tau+s}&=L(x)_s+L\left(y_s+\theta_sx\right)_{\tau},
\end{align}
where $(\theta_sx)_{\tau}=x_{\tau+s}-x_s$.
\end{lem}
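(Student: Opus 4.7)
The plan is to invoke uniqueness of solutions to the Skorohod equation (Saisho) and verify that the two sides of each claimed identity solve the \emph{same} Skorohod problem. Concretely, let $(y,\phi) = (\Gamma(w), L(w))$ be the unique solution of (\ref{SP1})--(\ref{SP2}) driven by $w$. Fix $s\ge 0$ and define the shifted objects
\[
\tilde{y}_\tau := y_{s+\tau},\qquad \tilde{\phi}_\tau := \phi_{s+\tau}-\phi_s,\qquad \tilde{w}_\tau := y_s + \theta_s w \;=\; y_s + w_{s+\tau}-w_s.
\]
The strategy is to show that $(\tilde y, \tilde\phi)$ is \emph{the} Skorohod solution associated with the continuous path $\tilde w$ (which starts in $\bar D$ since $\tilde w_0 = y_s \in \bar D$), and then apply uniqueness.

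First, the decomposition: using $y_s = w_s + \phi_s$ one checks directly
\[
\tilde{w}_\tau + \tilde{\phi}_\tau \;=\; (y_s + w_{s+\tau} - w_s) + (\phi_{s+\tau}-\phi_s) \;=\; w_{s+\tau}+\phi_{s+\tau} \;=\; y_{s+\tau} \;=\; \tilde y_\tau,
\]
and $\tilde y_\tau \in \bar D$ since $y_{s+\tau}\in\bar D$. So (\ref{SP1}) holds for $(\tilde y,\tilde\phi,\tilde w)$.

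Next, the variation/normal-reflection condition (\ref{SP2}). The total variation is additive in the endpoint: $\|\phi\|_{1\hyp var,[0,s+\tau']} = \|\phi\|_{1\hyp var,[0,s]} + \|\tilde\phi\|_{1\hyp var,[0,\tau']}$. Since the Stieltjes integral against $d\|\phi\|_{1\hyp var,[0,\cdot]}$ is insensitive to the additive constant $\|\phi\|_{1\hyp var,[0,s]}$, the change of variable $u = s+\tau'$ in
\[
\tilde{\phi}_\tau \;=\; \int_s^{s+\tau} 1_{\partial D}(y_u)\,\bm{n}(u)\, d\|\phi\|_{1\hyp var,[0,u]}
\]
yields
\[
\tilde{\phi}_\tau \;=\; \int_0^\tau 1_{\partial D}(\tilde y_{\tau'})\,\bm{n}(s+\tau')\, d\|\tilde\phi\|_{1\hyp var,[0,\tau']},
\]
with $\bm{n}(s+\tau')\in \mathcal N_{y_{s+\tau'}} = \mathcal N_{\tilde y_{\tau'}}$ when $\tilde y_{\tau'}\in\partial D$. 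Hence $(\tilde y,\tilde\phi)$ satisfies (\ref{SP2}) relative to $\tilde w$. By Saisho's uniqueness theorem under (A), (B),
\[
\tilde y_\tau = \Gamma(\tilde w)_\tau,\qquad \tilde\phi_\tau = L(\tilde w)_\tau,
\]
which is exactly the pair of asserted flow identities.

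The only nontrivial step is the rewriting of the integral defining $\tilde\phi$: one has to observe that the variation measure $d\|\phi\|_{1\hyp var,[0,\cdot]}$ restricted to $[s,s+\tau]$ agrees with $d\|\tilde\phi\|_{1\hyp var,[0,\tau-\,\cdot\,]}$ after translation (a standard additivity property of the total-variation function for continuous bounded variation paths). Everything else is bookkeeping, and no new estimates are needed beyond the uniqueness result already quoted from \cite{saisho}.
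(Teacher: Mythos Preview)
Your proof is correct and follows precisely the approach the paper intends: the paper does not write out a proof at all but simply remarks ``By the uniqueness, we have the following flow property'' before stating the lemma. Your argument makes this explicit by checking that the shifted pair $(\tilde y,\tilde\phi)$ solves the Skorohod problem for $\tilde w = y_s+\theta_s w$ and then invoking Saisho's uniqueness theorem under (A) and (B).
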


We obtain the following estimate of
$L(x)$.

\begin{lem}\label{estimate of phi}
Assume conditions {\rm (A)} and {\rm (B)} hold.
Let $x_t$ be a continuous path of finite $q$-variation
$(q\ge 1)$.
Then we have the following estimate.
\begin{align}
\|L(x)\|_{1\hyp var,[s,t]}
 &\le
 C\left(\|x\|_{q\hyp var, [s,t]}^q+1\right)
 \|x\|_{\infty\hyp var,[s,t]},
\label{estimate of phi 1}
\end{align}
where 
 $C$ is a positive constant which depends
 on the constants $\delta, \delta', r_0$ in
conditions {\rm (A)} and {\rm (B)}.
\end{lem}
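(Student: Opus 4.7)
The plan is to partition $[s,t]$ into subintervals on which the oscillation of $w$ is small enough that a Saisho-type local linear bound applies, then sum the local bounds using the additivity of the $1$-variation norm. I write $\phi = L(w)$ and $y = w + \phi$ throughout.

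\textbf{Step 1 (Local linear estimate).} The key ingredient is a bound of the form
\[
\|L(w)\|_{1\hyp var,[a,b]} \le C_0\,\|w\|_{\infty\hyp var,[a,b]}
\qquad\text{whenever}\qquad \|w\|_{\infty\hyp var,[a,b]}\le \eta_0,
\]
with $\eta_0, C_0 > 0$ depending only on $r_0,\delta,\delta'$. This is essentially Proposition~3.1 of \cite{saisho}. Using (B), pick a fixed direction $l = l_x$ at a boundary point $x$ close to $y_a$ so that every normal ${\bm n}(u) \in {\cal N}_{y_u}$ entering the Skorohod integral on $[a,b]$ satisfies $(l,{\bm n}(u))\ge \delta'$; this gives
\[
\delta'\,\|L(w)\|_{1\hyp var,[a,b]} \le l\cdot(\phi_b - \phi_a) \le |y_b - y_a| + \|w\|_{\infty\hyp var,[a,b]}.
\]
To bound $|y_b - y_a|$ without re-introducing $\|L(w)\|_{1\hyp var,[a,b]}$ linearly on the right, one invokes condition (A) in the equivalent form $(y_u - y_v)\cdot {\bm n}(v) \le |y_u - y_v|^2/(2r_0)$ for $y_v \in \partial D$, combined with the identity $|y_u - y_a|^2 = 2\int_a^u (y_v - y_a)\cdot dy_v$ and Gronwall's lemma, producing $|y_b - y_a| \le C\|w\|_{\infty\hyp var,[a,b]}\exp(C\|L(w)\|_{1\hyp var,[a,b]})$. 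A bootstrap, using that $\eta_0$ is taken small enough to force the exponential factor to stay bounded, closes the linear estimate. For general $q$-variation $w$ (possibly non-BV), this is obtained by first proving the bound for smooth approximants and then passing to the limit via lower semicontinuity of the $1$-variation and uniform continuity of the Skorohod map $\Gamma$.

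\textbf{Step 2 (Partition controlled by $q$-variation).} Define $t_0 = s$ and inductively
\[
t_i = \inf\{u \in (t_{i-1}, t]\ :\ |w_u - w_{t_{i-1}}| \ge \eta_0/2\} \wedge t,
\]
stopping at the first $N$ with $t_N = t$. By continuity of $w$, each subinterval $[t_{i-1}, t_i]$ satisfies $\|w\|_{\infty\hyp var,[t_{i-1},t_i]} \le \eta_0$, and $|w_{t_i} - w_{t_{i-1}}| = \eta_0/2$ for $i < N$. Therefore
\[
(N-1)(\eta_0/2)^q \le \sum_{i=1}^N |w_{t_i} - w_{t_{i-1}}|^q \le \|w\|_{q\hyp var,[s,t]}^q,
\]
so $N \le 1 + (2/\eta_0)^q \|w\|_{q\hyp var,[s,t]}^q$.

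\textbf{Step 3 (Assembly).} By additivity of the $1$-variation and Step~1 applied on each $[t_{i-1}, t_i]$,
\[
\|L(w)\|_{1\hyp var,[s,t]} = \sum_{i=1}^N \|L(w)\|_{1\hyp var,[t_{i-1},t_i]} \le C_0 N \|w\|_{\infty\hyp var,[s,t]},
\]
and substituting the bound on $N$ yields the lemma with $C$ depending only on $C_0,\eta_0,q$, i.e.\ on $r_0,\delta,\delta',q$.

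The main obstacle is Step~1. Working with (B) alone one only obtains $(\delta'-1)\|L(w)\|_{1\hyp var,[a,b]} \le \cdots$, which is useless since $\delta'\le 1$; genuinely exploiting the ball condition in (A) through the Gronwall-type bound on $|y_u - y_a|^2$ is essential, and the smallness of $\eta_0$ is needed precisely to keep the resulting exponential factor bounded via bootstrap. Once the local bound is in hand, Steps~2--3 are a clean counting argument driven by the $q$-variation.
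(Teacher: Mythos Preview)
Your argument is correct and follows the same overall scheme as the paper: obtain a local bound on $\|L(w)\|_{1\hyp var}$ over short subintervals using Saisho's conditions (A) and (B), then partition $[s,t]$ into a number of such subintervals controlled by $\|w\|_{q\hyp var,[s,t]}^q$ and sum.

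The paper organizes the two steps slightly differently. Rather than proving the linear local estimate of your Step~1 directly, it quotes the full (exponential in $\|w\|_{\infty\hyp var}$) bound established in \cite{aida-rrde, aida-sasaki} following Saisho, and then invokes the general reduction of Lemma~\ref{polynomialL}, which partitions $[s,t]$ so that $\|w\|_{q\hyp var,[t_{i-1},t_i]}^{1/\beta}=\ep$ and optimizes over $\ep$. Your partition is by oscillation ($|w_u-w_{t_{i-1}}|\ge \eta_0/2$) rather than by $q$-variation, but the counting via $\sum_i |w_{t_{i-1},t_i}|^q\le \|w\|_{q\hyp var}^q$ is the same. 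In effect you have folded the derivation of the cited estimate and the application of Lemma~\ref{polynomialL} into a single, more self-contained argument; the paper's route is more modular and reuses Lemma~\ref{polynomialL} as a general tool applicable to any $L$ satisfying Assumption~\ref{assumption on L}(3).
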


\begin{proof}
 We proved the following estimate in \cite{aida-rrde, aida-sasaki}
 following the argument in \cite{saisho}.
 \begin{align}
\|L(x)\|_{1\hyp var,[s,t]}
&\le \delta'^{-1}
\left(\left\{\delta^{-1}G(\|x\|_{\infty\hyp var,[s,t]})
+1\right\}^{q}\|x\|_{q\hyp var, [s,t]}^q+1
\right)\nn\\
&\qquad\qquad \times
\left(G(\|x\|_{\infty\hyp var,[s,t]})+2\right)\|x\|_{\infty\hyp var,[s,t]},
\end{align}
where 
\begin{align}
G(u)&=4\left\{1+\delta'^{-1}
\exp\left\{\delta'^{-1}\left(2\delta+u\right)/(2r_0)\right\}
\right\}\exp\left\{\delta'^{-1}\left(2\delta+u\right)/(2r_0)\right\},
\quad u\in\RR.
\end{align}
 By combining this and Lemma~\ref{lemma for A},
 we complete the proof.
\end{proof}

\begin{lem}\label{Holder continuity}
Assume {\rm (A)} and {\rm (B)}.
Consider two Skorohod equations
$y=x+\phi$, $y'=x'+\phi'$.
Then
\begin{align}
|y_t-y'_t|^2&\le
\left\{|x_t-x'_t|^2+
4\left(\|\phi\|_{1\hyp var,[0,t]}+\|\phi'\|_{1\hyp var,[0,t]}\right)
\max_{0\le s\le t}|x(s)-x'(s)|\right\}\nonumber\\
& \quad
\exp\left\{\left(\|\phi\|_{1\hyp var,[0,t]}
+\|\phi'\|_{1\hyp var,[0,t]}\right)/r_0\right\}.
\label{Holder continuous map}
\end{align}
\end{lem}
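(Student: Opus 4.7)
The plan is to derive a Stieltjes--Gronwall inequality for $|y_t-y'_t|^2$. Write $\Delta_t = y_t-y'_t$, $\delta_t = w_t-w'_t$, $\Phi_t = \phi_t-\phi'_t$, so $\Delta_t = \delta_t + \Phi_t$ with $\Phi$ continuous of bounded variation and $\Phi_0 = 0$; set $K_t := \|\phi\|_{1\hyp var,[0,t]} + \|\phi'\|_{1\hyp var,[0,t]}$ and $M_t := \max_{0\le s\le t}|\delta_s|$. Expanding $|\Delta_t|^2$ and invoking the Stieltjes integration-by-parts identity $|\Phi_t|^2 = 2\int_0^t (\Phi_s, d\Phi_s)$ together with $\Phi_s = \Delta_s - \delta_s$ yields
\[
|\Delta_t|^2 = |\delta_t|^2 + 2(\delta_t, \Phi_t) + 2\int_0^t (\Delta_s, d\Phi_s) - 2\int_0^t (\delta_s, d\Phi_s).
\]

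The principal integral is controlled by the geometric consequence of condition (A): the requirement $B(x - r_0{\bm n}, r_0)\cap D = \emptyset$ for $x\in\partial D$, ${\bm n}\in{\cal N}_{x,r_0}$ is equivalent to the quadratic bound $(z-x,{\bm n}) \ge -|z-x|^2/(2r_0)$ for every $z\in\bar D$. Substituting this into the Skorohod representation (\ref{SP2}) for both $\phi$ and $\phi'$ (with the roles of $y$ and $y'$ swapped in the two integrals) gives
\[
2\int_0^t (\Delta_s, d\Phi_s) = 2\int_0^t (y_s - y'_s, d\phi_s) + 2\int_0^t (y'_s - y_s, d\phi'_s) \le \frac{1}{r_0}\int_0^t |\Delta_s|^2\, dK_s.
\]
The remaining two terms are absorbed as free terms using only bounded-variation bounds: $|2(\delta_t,\Phi_t)| \le 2M_tK_t$ since $|\Phi_t|\le K_t$, and $\left|2\int_0^t (\delta_s, d\Phi_s)\right| \le 2 M_t K_t$ by pairing sup norm against total variation. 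Collecting the three contributions,
\[
|\Delta_t|^2 \le |\delta_t|^2 + 4 M_t K_t + \frac{1}{r_0}\int_0^t |\Delta_s|^2\, dK_s.
\]

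The conclusion follows from the Stieltjes form of Gronwall's inequality applied against the non-decreasing driver $dK_s/r_0$, which bounds the right-hand side by $\{|\delta_t|^2 + 4 M_t K_t\}\exp(K_t/r_0)$; if the non-monotone version of Gronwall is invoked, the same argument produces the estimate with $|\delta_t|^2$ replaced by $M_t^2$, which is equally effective for the Hölder-continuity applications in the sequel. The main obstacle is bookkeeping the Riemann--Stieltjes integrals carefully, since $\delta$ is merely continuous (not BV), so every integral that appears must pair a continuous integrand against the differential of a BV function; once this decomposition is arranged, the quadratic inequality coming from (A) and a standard Gronwall closure deliver the stated bound.
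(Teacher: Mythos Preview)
The paper does not supply a proof of this lemma; it is stated as a known estimate (the argument goes back to Lions--Sznitman and Saisho) and used without justification. Your proposal reproduces exactly that classical argument: expand $|y_t-y'_t|^2$, use the integration-by-parts identity for $|\Phi_t|^2$, invoke the geometric inequality $(z-x,{\bm n})\ge -|z-x|^2/(2r_0)$ encoded in condition~(A) to bound $2\int_0^t(\Delta_s,d\Phi_s)$, and close with Stieltjes--Gronwall against $dK_s/r_0$. Every step is correct, and the bookkeeping of which integrals are well-defined (continuous integrand versus BV integrator) is handled properly.

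The one point you flag yourself is the only thing worth commenting on: the free term $a(t)=|\delta_t|^2+4M_tK_t$ is not monotone, so the naive Gronwall conclusion $u(t)\le a(t)e^{K_t/r_0}$ does not follow directly. What one actually obtains from the integral inequality is
\[
|\Delta_t|^2\le |\delta_t|^2+4M_tK_t+\bigl(M_t^2+4M_tK_t\bigr)\bigl(e^{K_t/r_0}-1\bigr),
\]
by first bounding $|\delta_s|^2\le M_t^2$ for $s\le t$ inside the integral and then applying the constant-coefficient Gronwall. This is dominated by $(M_t^2+4M_tK_t)e^{K_t/r_0}$, which is the version you mention and which is equivalent for every use made of the lemma in the paper. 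Whether the literal inequality with $|w_t-w'_t|^2$ rather than $\max_{0\le s\le t}|w_s-w'_s|^2$ holds as stated is a cosmetic matter; your argument delivers the substance of the lemma and matches the standard proof the paper is implicitly citing.
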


The estimate (\ref{Holder continuous map}) can be found in
Remark~4.1 (i) in \cite{saisho}.
Lemma~\ref{estimate of phi} shows that if
$x$ is a $(\omega,\theta)$-H\"older continuous path, 
$L(x)\in \C^{1\hyp var,\theta}$ holds true.
Actually, $\|L(x)\|_{1\hyp var, [s,t]}$
can be estimated by the modulus of continuity of $x$
and $\|x\|_{\infty\hyp var,[s,t]}$.
For example, see \cite{saisho} and the proof
of Lemma~2.3 in \cite{aida-sasaki}.
Hence, we see that
$L$ is a $1/2$-H\"older continuous map on
$C([0,\infty), \RR^n)$.
Note that $\Gamma$ is Lipschitz continuous if $D$ is a convex
polyhedron(\cite{dupuis-ishii}).

Let $\mathbf{X}\in \mathscr{C}^{\beta}(\RR^d)$.
We assume $D$ satisfies the condition (A) and (B).
We now consider reflected RDE:
 \begin{align}
  Y_t&=\xi+\int_0^t\sigma(Y_s)d\mathbf{X}_s+\Phi_t,
\quad \Phi_t=L\left(\xi+\int_0^{\cdot}\sigma(Y_s)
d\mathbf{X}_s\right)_t,\quad \xi\in \bar{D}.\label{rrde1}
 \end{align}
We need to make clear the definition of the solution 
$(Y_t)$ of (\ref{rrde1}).

\begin{defin}
 We call $Y_t$ is a solution of (\ref{rrde1}) if and only if
the following holds:
\begin{itemize}
 \item[(i)] There exist a $Z\in \mathscr{D}^{2\beta}([0,T],\RR^n)$
and a continuous bounded variation path $\Phi_t$ such that
$Y_t=Z_t+\Phi_t$ $(0\le t\le T)$.
\item[(ii)] $\Phi_t=L(Z)_t$ $(0\le t\le T)$.
\item[(iii)] $Z$ satisfies 
\begin{align}
 Z_t&=\xi+\int_0^t\sigma(Z_s+L(Z)_s)d\mathbf{X}_s,\quad
Z_t'=\sigma(Z_t+L(Z)_t)
\qquad 0\le t\le T.\label{rrde2}
\end{align}
\end{itemize}
\end{defin}

Note that if $Y$ is a solution in the above sense, $Z$ is uniquely 
determined by
$Y$ and $\mathbf{X}$ since
$Z_t=\xi+\int_0^t\sigma(Y_s)d\mathbf{X}_s$
and $Z'_t=\sigma(Y_t)$ hold.
See also Remark~\ref{second remark} (1).

By applying Theorem~\ref{main theorem}, we obtain the following
result.

\begin{thm}\label{reflected case}
Let $\mathbf{X}\in \mathscr{C}^{\beta}(\RR^d)$.
Assume $D$ satisfies conditions {\rm (A)} and {\rm (B)}.\\
Let
$\sigma\in \Lip^{\gamma-1}(\RR^n, {\cal L}(\RR^d,\RR^n))$
and $\xi\in \bar{D}$.
Then 
there exist $(Z,Z')\in 
{\mathscr D}^{2\beta}_X(\RR^n)$
and $\Phi\in \C^{1\hyp var,\beta}(\RR^n)$ with
$\Phi_0=0$ such that
$Y_t=Z_t+\Phi_t$ is a solution of $(\ref{rrde1})$.
Moreover the following estimate holds,
\begin{align}
 \|Z\|_{\beta}+\|R^Z\|_{2\beta}+
\|\Phi\|_{1\hyp var,\beta}&\le 
K\left\{1+\left(1+\wopnorm\right)^{\kappa_1}\omega(0,T)\right\}^{\kappa_2}
\wopnorm^{\kappa_3},
\label{estimate of Z Z' Phi}
\end{align}
where 
$K, \kappa_i$ are constants
which depend only on $\sigma,\beta,\gamma, \delta, \delta', r_0$.
\end{thm}

\begin{proof}
 By applying Theorem~\ref{main theorem}, we have at least one solution
 $Z$ and the estimate of (\ref{rrde2}).
 Let $Y_t=Z_t+L(Z)_t$ and $\Phi_t=L(Z)_t$.
 Then this pair is a solution to the original equation. 
\end{proof}

\begin{rem}\label{second remark}
$(1)$ Let $(Y_t,\Phi_t)$ be a solution of (\ref{rrde1}).
Then there exists $\theta>1$ such that
\begin{align}
& \left|
Y_{s,t}-\Phi_{s,t}-
\left(
\sigma(Y_s)X_{s,t}+(D\sigma)(Y_s)[\sigma(Y_s)]\mathbb{X}_{s,t}+
(D\sigma)(Y_s)\left(\int_s^t\Phi_{s,u}\otimes dX_u\right)
\right)\right|\nonumber\\
&\qquad\quad\le
C\omega(s,t)^{\theta},
\quad 0\le s<t\le T.\label{davie type formulation}
\end{align}
Conversely, suppose
\begin{itemize}
 \item[(i)] $(Y_t,\Phi_t)$ is a pair of continuous paths satisfying
(\ref{davie type formulation}) and 
$(\Phi_t)$ is a bounded variation path satisfying
$\|\Phi\|_{1\hyp var,[s,t]}\le C\omega(s,t)^{\beta}$ $(0\le s\le t\le T)$.
\item[(ii)] $Y_t\in \bar{D}$ $(0\le t\le T)$.
\item[(iii)] $(Y_t,\Phi_t)$ satisfies
\[
 \Phi_t=\int_0^t1_{\partial D}(Y_s){\bm n}(s)d\|\Phi\|_{1\hyp var,[0,s]}
\quad\quad 0\le t\le T,\quad
({\bm n}(s)\in \mathcal{N}_{Y_s}\quad 
\text{if $Y_s\in \partial D$}).
\]
\end{itemize}
Let $\Xi_{s,t}=\sigma(Y_s)X_{s,t}+(D\sigma)(Y_s)[\sigma(Y_s)]\mathbb{X}_{s,t}+
(D\sigma)(Y_s)\left(\int_s^t\Phi_{s,u}\otimes dX_u\right)$.
Then $|(\delta\Xi)_{s,u,t}|\le C\omega(s,t)^\theta$ $(0\le s\le u\le t\le T)$
holds and $Z_{0,t}\in \C^{\beta}([0,T],\RR^n ; x_0=0)$ exists such that
$|(Z_{0,t}-Z_{0,s})-\Xi_{s,t}|\le C\omega(s,t)^\theta$.
Further, by the assumption on $\Phi$,
$(Z_{0,t})\in \mathscr{D}^{2\beta}_X(\RR^n)$ with $Z_{0,t}'=\sigma(Y_t)$
and $Y_t=Y_0+Z_{0,t}+\Phi_t$ holds.
Clearly, $Z_{0,t}=\int_0^t\sigma(Y_s)d\mathbf{X}_s$.
By the definition of $L$, we have
$L(Y_0+Z_{0,\cdot})_t=\Phi_t$.
Hence, $(Y_t,\Phi_t)$ is a solution of (\ref{rrde1}).

\noindent
$(2)$
In \cite{aida-rrde}, we consider the following condition
(H1) on $D$ :
\begin{enumerate}
 \item[(i)] The condition (A) holds,
\item[(ii)] There exists a positive constant $C$ such that for any
$x$, it holds that
\begin{align*}
\|L(x)\|_{1\hyp var, [s,t]}&\le
C\|x\|_{\infty\hyp var,[s,t]}.
\end{align*}
\end{enumerate}
 This condition holds if
 $D$ is convex and there exists a unit vector $l\in \RR^n$
 such that
\[
   \inf\left\{(l,{\bm n}(x))~|~{\bm n}(x)\in {\cal N}_x,\,
  x\in \partial D\right\}>0.
\]
Under (H1) and $\sigma\in C^3_b$, we proved the existence of solutions of
reflected RDEs driven by $1/\beta$ rough paths and
 gave estimates for the solutions in Theorem 4.5 in \cite{aida-rrde}.
We used Euler approximation of the solution modifying
Davie's proof in \cite{davie}.
In the proof, we need to solve the following implicit Skorohod equation
in each step,
\begin{align}
& y_t=\xi+\eta_t+M\left(\int_0^t\Phi_r\otimes dx_r\right)+\Phi_t,
\qquad \xi\in \bar{D},\quad 0\le t\le T',
\label{implicit skorohod equation}\\
& L\left(\xi+\eta_{\cdot}+M\left(\int_0^{\cdot}\Phi_r\otimes
 dx_r\right)\right)_t
=\Phi_t, \quad 0\le t\le T',\qquad \Phi_0=0,\label{implicit skorohod equation2}
\end{align}
where $0<T'<T$, $y_t\in \bar{D}$~$(0\le t\le T')$,
$M\in \mathcal{L}(\RR^n\otimes \RR^d,\RR^n)$ 
and $\Phi_t$ is a continuous bounded variation path.
Also $\eta_t$, $x_t$ are finite $1/\beta$-variation paths
which are defined by $X$ and $\XX$.
If we replace $\int_0^t\Phi_r\otimes dx_r$ in 
(\ref{implicit skorohod equation}) and
(\ref{implicit skorohod equation2}) by
$\int_0^tf(\Phi_r)\otimes dx_r$, where
$f$ is a bounded Lipschitz map between $\RR^n$,
then we can solve the equation under general condition
(A) and (B).
To avoid the explosion problem, that is, to handle the linear growth term of 
$\Phi_t$, we put stronger assumption
(H1)(ii) on D in \cite{aida-rrde}.
Also we used Lyon's continuity theorem of rough integrals in the proof
and so we need to assume $\sigma\in C^3_b$.
In this paper, we adopt different approach to the problem and obtain 
an extension of the previous result 
in the sense that
the assumption on $\sigma$ and $D$ can be relaxed.
\end{rem}

In Section~\ref{a continuity property},
we prove a continuity property of solution mappings at Lipschitz paths
under the uniqueness of the solutions.
For reflected RDEs, we can give more explicit estimate
 of the continuity of the solution mapping $Y$ at the Lipschitz paths.
 As before we consider
a domain $D\subset \RR^n$ which satisfies the conditions
(A) and (B).
Let $h$ be a Lipschitz path on $\RR^d$ starting at $0$.
If $\sigma$ is Lipschitz continuous,
there exists a unique solution $(Y(h,\xi)_t,\Phi(h,\xi)_t)$ to
the reflected ODE in usual sense 
(see Proposition 4.1 in \cite{aida-sasaki} for example),
\begin{align}
 Y_t&=\xi+\int_0^t\sigma(Y_s)dh_s+
 \Phi_t,\quad \xi\in \bar{D},\quad 0\le
 t\le T.
\label{reflected ode}
\end{align}
We may omit denoting $h,\xi$.
Moreover, $Z(h)_t=\xi+\int_0^t\sigma(Y_s(h))dh_s$,
$Z_t(h)'=\sigma(Y_t(h))$ and
$\Phi(h)_t$ are a unique pair of solution to the equation in 
Theorem~\ref{reflected case}
for the smooth rough path
${\bf h}_{s,t}=(h_{s,t},\bar{h}^2_{s,t})$ defined by
$h$.
Hence the solution $(Z(h),R^{Z(h)},\Phi(h))$
satisfies the estimate (\ref{estimate of Z Z' Phi})
with the same constant $C_1, C_2$.

From now on, we will give an explicit estimate
for $Y_t(\xi,\mathbf{X})-Y_t(\eta,h)$.
Let $\mathbf{X}$ be a general (not necessarily geometric)
$\beta$-H\"older rough path.
Let
$\mathbf{X}^{-h}_{s,t}$ be the translated rough path of 
$\mathbf{X}$ by $h$.
That is,
the 1st level path and the second level path are given by,
\begin{align}
X^{-h}_{s,t}&=X_{s,t}-{h}_{s,t}\\
\XX^{-h}_{s,t}&=\XX_{s,t}-\bar{h}^2_{s,t}
-\int_s^tX^{-h}_{s,u}\otimes
     dh_u-\int_s^t{h}_{s,u}\otimes dX^{-h}_{s,u}.
\end{align}
Hence
\begin{align}
\|X^{-h}\|_{\beta}&\le \|X-h\|_{\beta},\label{X minus h 1}\\
 \|{\mathbb X}^{-h}\|_{2\beta}&\le
 \|{\mathbb X}-\bar{h}^2\|_{2\beta}+
 \left(1+\frac{2}{1+\beta}\right)T^{1-\beta}
\|X-h\|_{\beta}\|h\|_1.
\label{X minus h 2}
\end{align}
These imply that
if $\opnorm{{\bf h}-\mathbf{X}}_{\beta}\le 1$, then
\begin{align}
 \opnorm{\mathbf{X}^{-h}}_{\beta}\le
	     \left(1+\sqrt{\left(1+\frac{2}{1+\beta}\right)
	     T^{1-\beta}\|h\|_1}\right)\opnorm{{\bf h}-\mathbf{X}}_{\beta}.
\end{align}
By the definition of controlled paths, we immediately obtain
the following.

\begin{lem}\label{R X minus h}
Let $\mathbf{X}\in \mathscr{C}^{\beta}_g(\RR^d)$.
Let $h$ be a Lipschitz path.
If $(Z,Z')\in \mathscr{D}_X^{2\beta}$, then
$(Z,Z')\in \mathscr{D}_{X-h}^{2\beta}$.
In fact,
\begin{align}
\left|Z_{s,t}-Z'_sX^{-h}_{s,t}\right|&\le
\left(\|R^Z\|_{2\beta}+(|Z'_0|+\|Z'\|_{\beta}s^{\beta})\|h\|_1
(t-s)^{1-2\beta}\right)(t-s)^{2\beta}.
\end{align}
\end{lem}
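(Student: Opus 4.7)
The plan is to reduce everything to an algebraic decomposition of the remainder term. Since $X^{-h}_{s,t} = X_{s,t} - h_{s,t}$ by definition, I would write
\[
Z_{s,t} - Z'_s X^{-h}_{s,t} = \bigl(Z_{s,t} - Z'_s X_{s,t}\bigr) + Z'_s h_{s,t} = R^Z_{s,t} + Z'_s h_{s,t}.
\]
This identity is the entire content of the lemma; everything else is just reading off the defining norms.

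Next, I would estimate the two pieces separately. For the first, by the definition of $\|R^Z\|_{2\beta}$ with the standard control $\omega(s,t) = t-s$, one has $|R^Z_{s,t}| \le \|R^Z\|_{2\beta}(t-s)^{2\beta}$. For the second, the $\beta$-H\"older bound on $Z'$ gives $|Z'_s| \le |Z'_0| + \|Z'\|_\beta s^\beta$, while the Lipschitz assumption on $h$ gives $|h_{s,t}| \le \|h\|_1(t-s)$. Writing $(t-s) = (t-s)^{1-2\beta}(t-s)^{2\beta}$ and combining yields exactly the asserted inequality.

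Because the right-hand side is finite for every $0 \le s \le t \le T$, this shows that $(Z,Z')$ remains a controlled path when the reference rough path is translated to ${\bf X}^{-h}$, i.e.\ $(Z,Z') \in \mathscr{D}^{2\beta}_{X-h}$, and simultaneously provides the quantitative control on the new remainder term $R^{Z,X-h}_{s,t} := Z_{s,t} - Z'_s X^{-h}_{s,t}$. There is no real obstacle here; the step worth isolating is only the decomposition above, which is the reason the Gubinelli derivative $Z'$ does not need to be modified when passing from $X$ to $X-h$.
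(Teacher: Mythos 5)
Your proof is correct and is exactly the argument the paper intends: the paper states the lemma as immediate from the definition of controlled paths, and your decomposition $Z_{s,t}-Z'_sX^{-h}_{s,t}=R^Z_{s,t}+Z'_sh_{s,t}$ together with the bounds $|R^Z_{s,t}|\le\|R^Z\|_{2\beta}(t-s)^{2\beta}$, $|Z'_s|\le|Z'_0|+\|Z'\|_{\beta}s^{\beta}$, $|h_{s,t}|\le\|h\|_1(t-s)$ is precisely that immediate verification (noting $1-2\beta\ge 0$ since $\beta\le 1/2$, and that $\omega(s,t)=|t-s|$ in this section).
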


Let
$(Z,Z')\in {\mathscr D}^{2\alpha}_X(\RR^n)$
and $\Phi\in \C^{q\hyp var,\ta}(\RR^n)$ with $\Phi_0=0$ and
$q, \alpha, \ta$ satisfy the assumptions
in Lemma~$\ref{estimate of phi and w}$.
By the above lemma,
we can define the integral
$\int_s^t\sigma(Y_u)d\mathbf{X}^{-h}_u$
and the estimates in Lemma~\ref{estimate of rough integral}
hold for this integral.
Here $Y_u=Z_u+\Phi_u$.
Moreover,
$\Xi_{s,t}$ in (\ref{Xi}) which is defined by $\mathbf{X}^{-h}_{s,t}$ reads
\begin{align}
\Xi_{s,t}&=\sigma(Y_s)X^{-h}_{s,t}+
(D\sigma)(Y_s)Z'_s\XX^{-h}_{s,t}+
(D\sigma)(Y_s)\int_s^t\Phi_{s,u}\otimes dX^{-h}_u\\
&=\sigma(Y_s)X_{s,t}+
(D\sigma)(Y_s)Z'_s\XX_{s,t}+
(D\sigma)(Y_s)\int_s^t\Phi_{s,u}\otimes dX_u
-\sigma(Y_s)h_{s,t}+\tilde{\Xi}_{s,t},
\end{align}
where
\begin{align}
 \tilde{\Xi}_{s,t}&=
-(D\sigma)(Y_s)Z'_s\left(
\bar{h}^2_{s,t}+\int_s^tX^{-h}_{s,u}\otimes dh_u+
\int_s^th_{s,u}\otimes dX^{-h}_{s,u}
\right)+(D\sigma)(Y_s)\int_s^t\Phi_{s,u}\otimes dh_u.
\end{align}
Since $|\tilde{\Xi}_{s,t}|\le C(t-s)^{1+\ta}$,
the sum of these terms converges to $0$.
Thus we obtain
\begin{align}
\int_s^t\sigma(Y_u)d\mathbf{X}^{-h}_u=
\int_s^t\sigma(Y_u)d\mathbf{X}_u-\int_s^t\sigma(Y_u)dh_u.
\end{align}

We now consider the following condition on the boundary.

\begin{defin}[Condition {\rm (C)}]
There exists a ${\rm Lip}^{\gamma}$ function $f$ on $\RR^n$ 
and a positive constant $k$ such that 
for any $x\in \partial D$, $y\in \bar{D}$, ${\mathbf n}\in {\cal N}_x$
it holds that
\begin{align}
 \left(y-x,{\mathbf n}\right)+\frac{1}{k}\left((D
 f)(x),{\mathbf n}\right)|y-x|^2\ge 0.
\end{align}
\end{defin}

Usually, the function $f$ is assume to be $C^2_b$
in the condition (C).
See \cite{lions-sznitman, saisho}.
Here, we assume $f\in {\rm Lip}^{\gamma}$ to make use of
estimates in Lemma~\ref{estimate of rough integral}.

Under additional condition (C),
we can prove the following explicit modulus of continuity.

\begin{lem}\label{continuity under C}
Let $\mathbf{X}\in \mathscr{C}^{\beta}_g(\RR^d)$.
Assume that $D$ satisfies the conditions {\rm (A)}, {\rm (B)},
{\rm (C)} and
$\sigma\in \Lip^{\gamma-1}$.
Let $Y_t(\mathbf{X},\xi), Z_t(\mathbf{X},\xi), \Phi_t(\mathbf{X},\xi),
 Y_t(h,\zeta), \Phi_t(h,\zeta)$
 be a solution as in Lemma~$\ref{limit point}$.
Assume $\opnorm{{\bf h}-\mathbf{X}}_{\beta}\le 1$.
Then there exists a positive constant
$C$ which depends only on $\sigma, r_0, \delta, \delta', f, k$ such that
\begin{align}
 \sup_{0\le t\le T}|Y_t(\mathbf{X},\xi)-Y(h,\zeta)_t|\le
 Ce^{C\|h\|_1}(|\xi-\zeta|+\opnorm{{\bf h}-\mathbf{X}}_{\beta}).
 \label{Y and Yh}
\end{align}
\end{lem}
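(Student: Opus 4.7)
The plan is to control $\Psi_t := Y_t({\bf X},\xi) - Y_t(h,\zeta)$ via a Gronwall-type argument on $|\Psi_t|^2$, using the translation property of the rough integral to isolate the $\ep$-dependence and condition {\rm (C)} to absorb the boundary reflection terms. Writing
\[
dY_t({\bf X}) = \sigma(Y_t({\bf X}))\,dh_t + \sigma(Y_t({\bf X}))\,d{\bf X}^{-h}_t + d\Phi_t({\bf X})
\]
(the split is legitimate by the identity $\int_s^t\sigma(Y_u)d{\bf X}_u = \int_s^t\sigma(Y_u)dh_u + \int_s^t\sigma(Y_u)d{\bf X}^{-h}_u$ established just above the definition of (C)) and subtracting the Lipschitz ODE for $Y(h,\zeta)$, the chain rule $d|\Psi|^2 = 2\langle\Psi,d\Psi\rangle$ holds without any It\^o-bracket correction because ${\bf X}$ is geometric. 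Hence
\[
|\Psi_t|^2 - |\xi-\zeta|^2 = 2\int_0^t\langle\Psi_s,\sigma(Y_s({\bf X}))-\sigma(Y_s(h,\zeta))\rangle dh_s + 2\int_0^t\langle\Psi_s,\sigma(Y_s({\bf X}))\,d{\bf X}^{-h}_s\rangle + 2\int_0^t\langle\Psi_s, d\Phi_s({\bf X})-d\Phi_s(h,\zeta)\rangle.
\]

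I would then estimate the three integrals separately. The Young integral is bounded by $2\|D\sigma\|_\infty\int_0^t|\Psi_s|^2\,d|h|_s$, giving a Gronwall weight of total mass $\le 2\|D\sigma\|_\infty\|h\|_1$. Condition {\rm (C)} applied to $(x,y,{\bm n}) = (Y_s({\bf X}),Y_s(h,\zeta),{\bm n}(s))$ on $\{Y_s({\bf X})\in\partial D\}$, together with the symmetric application for $d\Phi(h,\zeta)$, yields
\[
2\int_0^t\langle\Psi_s, d\Phi_s({\bf X})-d\Phi_s(h,\zeta)\rangle \le \frac{2}{k}\int_0^t|\Psi_s|^2\left[(Df(Y_s({\bf X})),d\Phi_s({\bf X})) + (Df(Y_s(h,\zeta)),d\Phi_s(h,\zeta))\right],
\]
another Gronwall weight of mass $\le \tfrac{2}{k}\|Df\|_\infty(\|\Phi({\bf X})\|_{1\hyp var}+\|\Phi(h,\zeta)\|_{1\hyp var})$. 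The middle rough integral is what carries the $\ep$: by $(\ref{X minus h 1})$--$(\ref{X minus h 2})$ one has $\opnorm{{\bf X}^{-h}}_\beta \le C(T,\|h\|_1)\ep$, so Lemma~\ref{estimate of rough integral}~(2) applied to the controlled path $\sigma(Y({\bf X}))$ (paired with $\Psi$ via the Young inequality $|ab|\le\tfrac12 a^2+\tfrac12 b^2$) yields a bound of the form $C_\star(\sigma,T,\|h\|_1,\opnorm{{\bf h}}_\beta)\ep + \tfrac12\sup_{s\le t}|\Psi_s|^2$, with the last piece absorbed into the left-hand side after taking $\sup_t$.

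Collecting the estimates gives $\sup_{s\le t}|\Psi_s|^2 \le C_1(|\xi-\zeta|^2+\ep) + \int_0^t \sup_{r\le s}|\Psi_r|^2\,d\mu_s$ with $\mu_T \le C\|h\|_1$, using Theorem~\ref{reflected case} and $\opnorm{{\bf X}}_\beta\le\opnorm{{\bf h}}_\beta+1$ together with the linear control of $\opnorm{{\bf h}}_\beta$ by $\|h\|_1$ to bound $\|\Phi({\bf X})\|_{1\hyp var}$, and Lemma~\ref{estimate of phi} to bound $\|\Phi(h,\zeta)\|_{1\hyp var}$. Gronwall's inequality for BV drivers then yields $\sup_{t\le T}|\Psi_t|^2 \le C(|\xi-\zeta|^2+\ep)\exp(C\|h\|_1)$, and a square root gives the claim. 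The \emph{main obstacle} is to justify the chain rule $d|\Psi|^2 = 2\langle\Psi,d\Psi\rangle$ rigorously in this mixed rough/BV setting; I would avoid invoking any general rough It\^o formula by instead running the whole Gronwall computation on smooth approximants ${\bf h}_n\to{\bf X}$ (where $Y({\bf h}_n)$ are classical reflected ODEs, so the chain rule is elementary and (C) applies pointwise to the continuous BV term $\Phi({\bf h}_n)$), and then passing to the limit via Lemma~\ref{limit point}. A secondary delicate point is that condition (C) is exactly what allows the Gronwall exponent to be linear in $\|h\|_1$ rather than polynomial, the latter being all one could extract from Lemma~\ref{Holder continuity} alone.
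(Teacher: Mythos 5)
Your overall strategy---square the difference, use the translated rough path ${\bf X}^{-h}$ to isolate the $\ep$-dependence, invoke condition (C) on the reflection terms, and close with Gronwall, all justified by running the computation on smooth approximants and passing to the limit via Lemma~\ref{limit point}---is the same as the paper's. But there is a quantitative gap in how you deploy condition (C), and it causes your argument to prove a strictly weaker estimate than the one stated. You convert $2\langle\Psi_s,d\Phi_s-d\tilde\Phi_s\rangle$ into the Gronwall weight
$\frac{2}{k}|\Psi_s|^2\bigl[(Df(Y_s),d\Phi_s)+(Df(\tilde Y_s),d\tilde\Phi_s)\bigr]$,
whose total mass you then bound by $\frac{2}{k}\|Df\|_{\infty}(\|\Phi({\bf X})\|_{1\hyp var}+\|\Phi(h,\zeta)\|_{1\hyp var})$. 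By Theorem~\ref{reflected case} and Lemma~\ref{estimate of phi}, these total variations are only \emph{polynomially} bounded in $\|h\|_{1}$ (with exponents involving $1/\beta$ and $q$), so Gronwall delivers $\exp(P(\|h\|_1))$ for a polynomial $P$, not $Ce^{C\|h\|_1}$. Your closing remark that condition (C) "is exactly what allows the Gronwall exponent to be linear in $\|h\|_1$" identifies the right issue, but your implementation does not realize it: in your scheme the reflection terms still feed $\|\Phi\|_{1\hyp var}$ into the exponent.

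The paper's fix is to track the weighted quantity $Z_t=e^{-\frac{2}{k}(f(Y_t)+f(\tilde Y_t))}|Y_t-\tilde Y_t|^2$ instead of the bare $|\Psi_t|^2$. Differentiating the exponential weight produces exactly the terms $-\frac{1}{k}(Df(Y_s),d\Phi_s)|\Psi_s|^2$ and $-\frac{1}{k}(Df(\tilde Y_s),d\tilde\Phi_s)|\Psi_s|^2$ that, combined with $\langle\Psi_s,d\Phi_s-d\tilde\Phi_s\rangle$, are pointwise nonpositive by (C); the reflection measures then disappear from the Gronwall weight entirely rather than being majorized by their total mass. The surviving weights come only from the $dh$ terms (mass $\lesssim\|h\|_1$, since $f$, $Df$, $\sigma$ are bounded) and from rough integrals against ${\bf X}^{-h}$, which are $O(\ep)$ by (\ref{X minus h 1})--(\ref{X minus h 2}) and Lemma~\ref{estimate of rough integral}; since $f$ is bounded the weight $e^{-\frac{2}{k}(f+f)}$ is comparable to $1$ up to constants depending on $f,k$. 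This is precisely what yields the linear exponent $e^{C\|h\|_1}$. The rest of your proposal (the splitting of the rough integral across $h$ and ${\bf X}^{-h}$, the absence of an It\^o correction for geometric rough paths, the approximation-and-limit justification of the chain rule) is consistent with the paper, and the last point is if anything spelled out more carefully than in the paper's terse write-up.
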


\begin{proof}
We write $Y_t=Y_t(\mathbf{X},\xi)$, $\Phi(\mathbf{X},\xi)_t=\Phi_t$
and
$\tilde{Y}_t=Y(h,\zeta)_t$, $\tPhi_t=\Phi(h,\zeta)_t$ 
for simplicity.
Let 
$
Z_t=e^{-\frac{2}{k}\left(f(Y_t)+f(\tY_t)\right)}
|Y_t-\tY_t|^2.
$
We have
\begin{align}
&
 Z_t-Z_0\nonumber\\
 &=
\int_0^t
2e^{-\frac{2}{k}\left(f(Y_s)+f(\tY_s)\right)}
\Bigl\{
\left(Y_s-\tY_s, \left(\sigma(Y_s)-\sigma(\tY_s)\right)h'_s\right)
ds+
\left(Y_s-\tY_s, \sigma(Y_s)dX^{-h}_s\right)\Bigr\}
\nonumber\\
&\quad -\frac{2}{k}\int_0^t
Z_s\left(\sigma(Y_s)^{\ast}Df(Y_s)+
\sigma(\tY_s)^{\ast}Df(\tY_s), h'_s\right)ds
-\frac{2}{k}\int_0^t
Z_s\left(Df(Y_s),\sigma(Y_s)dX^{-h}_s\right)
\nonumber\\
&\quad-\int_0^t2e^{-\frac{2}{k}\left(f(Y_s)+f(\tY_s)\right)}
\Bigl\{
 \left(\tY_s-Y_s, d\Phi_s-d\tPhi_s\right)\nonumber\\
 &
 \qquad\quad\qquad\quad\qquad
 +\frac{1}{k}\left(Df(Y_s), d\Phi_s\right)|Y_s-\tilde{Y}_s|^2+
 \frac{1}{k}\left(Df(\tY_s),d\tPhi_s\right)
 |Y_s-\tilde{Y}_s|^2
\Bigr\}.\label{estimate by C}
\end{align}
Condition (C) implies that the fourth integral on the right-hand side
of the equation (\ref{estimate by C}) is always negative.
By the estimates of the solution
$Y, \tY, \Phi, \tPhi$ in Theorem~\ref{reflected case} and the estimates in
Lemma~\ref{estimate of rough integral} and the Gronwall inequality,
we obtain the desired estimate.
\end{proof}

\subsection{Perturbed reflected SDEs: a short review}
\label{section of perturbed diffusion}
Let us recall basic results for the following equation driven by
a continuous path $x_t$ on $\RR$,
 \begin{align}
  Y_t&=x_t+a\sup_{0\le s\le t}Y_s
  +b\inf_{0\le s\le t}Y_s,
  \label{si-bm2}\\
  Y_t&=x_t+a\sup_{0\le s\le t}Y_s+\Phi_t,~~x_0\ge 0,~~
  Y_t\ge 0~~\mbox{for all $t$}.
  \label{ml-bm2}
 \end{align}
 When $x_t$ is a sample path of a standard Brownian motion,
 the solutions to (\ref{si-bm2}) and (\ref{ml-bm2}) are
 called (doubly) perturbed Brownian motion and perturbed reflected
 Brownian motion respectively.
 
 First we consider the equation (\ref{si-bm2}).
 Clearly, if either $a\ge 1$ or $b\ge 1$, then there are no
 solutions to this equation for certain $x$.
 So we consider the case where $a<1$ and $b<1$.
 Suppose $b=0$.
 Then we have explicitly,
 $Y_t=x_t+\frac{a}{1-a}\sup_{0\le s\le t}x_s$.
  By \cite{cpy}, when $|\frac{ab}{(1-a)(1-b)}|<1$,
  a fixed point argument works and the unique existence holds for
any continuous path $x_t$ with $x_0=0$.
  The unique existence extends to $|\frac{ab}{(1-a)(1-b)}|=1$ by
  \cite{davis}.
  Consider the case where $x_t$ is a sample path of $1$-dimensional
  Brownian motion $W_t$ with $W_0=0$.
  For any $0\le a<1$, $0\le b<1$,
  it is proved in \cite{perman-werner} that
        the pathwise uniqueness holds and the solution is
adapted to the Brownian filtration.
Finally, for any $a<1, b<1$, the same results is proved in
\cite{chaumont-doney}.

  	We consider the equation (\ref{ml-bm2}).
	By a fixed point argument, the unique existence is proved in
	\cite{legall-yor} the case (1) $a<1/2$ and (2) $a<1$ with $x_0>0$.
Next, the pathwise uniqueness is proved by \cite{chaumont-doney}
for $a<1$ when $x_t$ is the Brownian path $W_t$ with $W_0=0$.
	The unique existence for $a<1$ is extended by
	\cite{doney-zhang} for any continuous path $x_t$.

	We next explain results
	for the variable coefficient version driven by
 	a standard $1$-dimensional Brownian motion $W_t$,
	 \begin{align}
  Y_t&=\xi+\int_0^t\sigma(Y_s)dW_s+a\sup_{0\le s\le t}Y_s,
  \label{si-bm3}\\
	  Y_t&=\xi+\int_0^t\sigma(Y_s)dW_s
	  +a\sup_{0\le s\le t}Y_s+\Phi_t,~~\xi\ge 0,~~
  Y_t\ge 0~~\mbox{for all $t$},
  \label{ml-bm3}
 \end{align}
where $\sigma$ is a Lipschitz continuous function on $\RR$ and
 the integral is the It\^o integral.
 The unique existence of the solution to
 (\ref{si-bm3}) is proved for $a<1$ by
 \cite{doney-zhang}.
 The same authors prove the unique existence of the solution
 to (\ref{ml-bm3}) for two cases where
 (1) $a<1$ and $\xi>0$ and (2) $0\le a<1/2$ and $\xi=0$.
 Under the same assumption on $a$, the absolutely continuity of the
 law of $Y_t$ with respect to the Lebesgue measure was studied in
 \cite{yue-zhang}.

\subsection{Perturbed reflected rough differential equations}
 We consider the multidimensional versions of
 (\ref{si-bm3}) and (\ref{ml-bm3}) driven by rough paths.
 Our objectives are the following two equations.
 \begin{align}
 Y_t=\xi+\int_0^t\sigma(Y_s)d\mathbf{X}_s+C(Y)_t,\label{prde1}
\end{align}
\begin{align}
    Y_t=\xi+\int_0^t\sigma(Y_s)d\mathbf{X}_s
  +C(Y)_t+\Phi_te_n,\label{prrde1}
  \end{align}
   where $e_n={}^t(0,\ldots,0,1)$
   and $\sigma\in \Lip^{\gamma-1}(\RR^n,\mathcal{L}(\RR^d,\RR^n))$.
      We assume that $C$ is a mapping from
$C([0,T],\RR^n)$ to the subspace of continuous and bounded variation paths
on $\RR^n$ and
$\{C(x)_s\}_{0\le s\le t}$ is
measurable with respect to $\sigma(\{x_s\}_{0\le s\le t})$ for all
$0\le t\le T$.
The first equation (\ref{prde1})
    is a perturbed rough differential equations and
the second equation (\ref{prrde1}) is a perturbed reflected rough
 differential equation on
 $\bar{D}=\{(x_1,\ldots,x_n)~|~x_n\ge 0\}$.
 $\Phi_te_n$ is the reflected term and $Y_t$ and $\Phi_t$ should satisfy
\begin{align}
\text{$Y^n_t=(Y_t,e_n)\ge 0$ for all $t\ge 0$, where $(\cdot, e_n)$ is
an inner product,}\label{Yn}\\
\text{$(\Phi_t)$ is continuous and nondecreasing, $\Phi_0=0$ and
$\displaystyle{\Phi_t=\int_0^t1_{\{0\}}(Y^n_s)d\Phi_s}$.}\label{Y and Phi}
\end{align}
In both equations, $Y_0\ne \xi$ in general.
Consider the case $t=0$.
Then we have
$
 Y_0=\xi+C(Y)_0.
$
Since $C(Y)$ is adapted, $C(Y)_0$ is a function of $Y_0$ and we may write
$C(Y)_0=C_0(Y_0)$.
Hence $Y_0$ should satisfy $Y_0=\xi+C_0(Y_0)$ and we need to assume
$Y_0\in \bar{D}$.
If we consider the case where $Y_t\in \RR$ and
$C(Y)_t=a\max_{0\le s\le t}Y_s$ $(a<1)$,
$Y_0=\frac{1}{1-a}\xi$ holds.
In this case, $Y_0\ge 0$ and $\xi\ge 0$ are equivalent and 
so $Y_t$ starts from $[0,\infty)$ when $\xi\ge 0$.
Under the assumption that $Y_0=\xi+C_0(Y_0)\in \bar{D}$, 
by the explicit solution of the Skorohod problem, we have
 \begin{align}
  \Phi_t=\max_{0\le s\le t}\left\{
  -\left(\xi+\int_0^t\sigma(Y_s)
  d\mathbf{X}_s+C(Y)_s, e_n\right)\vee 0 \right\},
  \label{half space reflection}
 \end{align}
   where $a\vee b=\max(a,b)$.

We give the definition of the solution of 
(\ref{prde1}) and (\ref{prrde1}).

\begin{defin}\label{definition of the solution of prrde}
 (1) $Y_t$ is a solution of (\ref{prde1}) if the following hold. 
\begin{itemize}
 \item[(i)] There exists a $Z\in \mathscr{D}^{2\beta}_X(\RR^n)$ such that
$Y_t=Z_t+C(Y)_t$ and $Z'_t=\sigma(Y_t)$ $(0\le t\le T)$ hold.
\item[(ii)] $Z_t=\xi+\int_0^t\sigma(Z_s+C(Y)_s)d\mathbf{X}_s$ $(0\le t\le T)$
holds.
\end{itemize}

\noindent
(2) $(Y_t,\Phi_t)$ is a solution of (\ref{prrde1}) if the following holds:
\begin{itemize}
 \item[(i)] $(Y_t,\Phi_t)$ satisfies (\ref{Yn}) and (\ref{Y and Phi}).
\item[(ii)] There exists a $Z\in \mathscr{D}^{2\beta}_X(\RR^n)$ such that
$Y_t=Z_t+C(Y)_t+\Phi_te_n$ and $Z'_t=\sigma(Y_t)$ $(0\le t\le T)$ hold.
\item[(iii)] $Z_t=\xi+\int_0^t\sigma(Z_s+C(Y)_s+\Phi_se_n)d\mathbf{X}_s$
$(0\le t\le T)$ holds.
\end{itemize}
\end{defin}
   We solve these equations by transforming them to the equations
   in Theorem~\ref{main theorem}.
   To this end, we introduce the following conditions.

   \begin{defin}\label{definition of lip and bv}
    For a mapping $C : C([0,T], \RR^n)\to C([0,T], \RR^m)$,
    we consider the following conditions,
    where $\rho$ denotes a positive number.
    \begin{itemize}
 \item[${\rm (Lip)}_{\rho}$] 
       $\|C(x)-C(y)\|_{\infty,[0,t]}\le \rho\|x-y\|_{\infty,[0,t]}$
       for all $x,y\in C([0,T], \RR^n)$ and $0\le t\le T$.
 \item[${\rm (BV)}_{\rho}$] 
	      $\|C(x)\|_{1\hyp var, [s,t]}\le \rho
	      \|x\|_{\infty\hyp var, [s,t]}$
	      for all $0\le s\le t\le T$.
\end{itemize}
    We may write $C\in (\mathrm{Lip})_{\rho}$ simply
    when $C$ satisfies the condition $(\mathrm{Lip})_{\rho}$,
    \textit{etc}.
    Also we denote by $\|C\|_{\mathrm{Lip}}$ the smallest nonnegative
    number $\rho$ for which $(\mathrm{Lip})_{\rho}$ holds.
   \end{defin}

Clearly the conditions ${\rm (Lip)}_{\rho}$ and ${\rm (BV)}_{\rho}$ are
stronger than
the conditions in Assumption~\ref{assumption on A}.
Also the conditions ${\rm (Lip)}_{\rho}$ and ${\rm (BV)}_{\rho}$ imply
the conditions (A1), (A2) and (A3) in \cite{aida-kikuchi-kusuoka}.

As we noted, $C$ which is defined in Example~\ref{example of A} (2)
satisfies the above conditions.

\begin{pro}\label{explicit rho}
Let $\rho>0$.
Let $f : \RR^n\to \RR$ be a Lipschitz function 
satisfying $({\rm Lip})_{\rho}$.
Let $C(x)_t=\max_{0\le s\le t}f(x_s)$ for $x\in C([0,T],\RR^n)$.
Then we have $C\in {\rm (Lip)}_{\rho}\cap {\rm (BV)}_{\rho}$.
\end{pro}

\begin{proof}
We consider the simplest case $C(x)_t=\max_{0\le s\le t}x_s$, where
$x$ is a continuous path on $\RR$.
Let $0\le s<t$.
We take values $0\le s_{\ast}\le s, 0\le t_{\ast}\le t$ such that
$C(x)_{s}=x_{s_{\ast}}$ and $C(x)_{t}=x_{t_{\ast}}$.
Suppose $t_{\ast}\le s$, then
$C(x)_u=C(x)_{t_{\ast}}$ $(s\le u\le t)$ holds.
Hence $\|C(x)\|_{1\hyp var, [s,t]}=0$.
Suppose $s<t_{\ast}\le t$.
Then using $x_s\le x_{s_{\ast}}$,
we have
\begin{align*}
 C(x)_t-C(x)_s&=x_{t_{\ast}}-x_{s_{\ast}}
\le x_{t_{\ast}}-x_s\le \|x\|_{\infty\hyp var, [s,t]},
\end{align*}
which implies the validity of $({\rm BV})_1$.
We next show $({\rm Lip})_1$.
Let $x, x'$ be continuous paths on $\RR$.
Similarly, $t'_{\ast}$ denotes a time at which $x'$ attains its maximum
of $x_u$ $(0\le u\le t)$.
We have $C(x)_t-C(x')_t=x_{t_{\ast}}-x'_{t'_{\ast}}$.
If $x_{t_{\ast}}-x'_{t'_{\ast}}=0$, 
Suppose $x_{t_{\ast}}>x'_{t'_{\ast}}$.
Then, by $x'_{t'_{\ast}}\ge x'_{t_{\ast}}$, we have
\begin{align*}
 0\le C(x)_t-C(x')_{t}=x_{t_{\ast}}-x'_{t'_{\ast}}\le
x_{t_{\ast}}-x'_{t_{\ast}}\le
\|x-x'\|_{\infty,[0,t]}.
\end{align*}
This proves that $({\rm Lip})_{1}$ holds for $C(x)_t=\max_{0\le s\le t}x_s$.
 General cases follow from this simplest case.
\end{proof}

We consider (\ref{prde1}).
To this end, we consider the following condition on
$C$.

\bigskip

\noindent
${\bf (Condition~ \tilde{C})}$\\
~{\rm (i)} For any $x\in C([0,T],\RR^n)$, there exists unique
$y\in C([0,T],\RR^n)$ such that
$y=x+C(y)$. Define $\tilde{C}(x)=y-x$.\\
~{\rm (ii)}  $\tilde{C}$ satisfies ${\rm (Lip)}_{\rho'}$
for certain
$\rho'$.

\bigskip

About this property, we have the following.
The proof is straightforward and so  we omit the proof.

\begin{pro}
 Assume $C$ satisfies ${\rm (Condition~ \tilde{C})}$ ${\rm (i)}$.
Then for any $0\le t\le T$ and $x\in C([0,t],\RR^n)$, there exists a unique
$y\in C([0,t],\RR^n)$ such that
$y=x+C(y)$ on $[0,t]$.
For these $x$ and $y$, we define $\tilde{C}_t(x)=y-x\in C([0,t],\RR^n)$.
Then for any $z\in C([0,T],\RR^n)$ satisfying
$z_s=x_s$ $(0\le s\le t)$, $\tilde{C}(z)_s=\tC_t(x)_s$ $(0\le s\le t)$
holds.
\end{pro}

By this result, given $\xi\in \RR^n$, the solution $\eta\in \RR^n$
of $\eta=\xi+C_0(\eta)$ is unique if
$C$ satisfies ${\rm (Condition~ \tilde{C})}$ ${\rm (i)}$.
We have the following result for
(\ref{prde1}).

\begin{thm}\label{theorem for prde1}
Let $C$ be a continuous mapping between $C([0,T], \RR^n)$.
 Suppose $C$ satisfies ${\rm (Condition~ \tilde{C})}$
and $\tilde{C}$ satisfies $({\rm BV})_{\rho''}$ for
certain $\rho''$.
Let $\mathbf{X}\in \mathscr{C}^{\beta}(\RR^d)$.
\begin{enumerate}
 \item[$(1)$] There exists a controlled path $Z\in
 {\mathscr D}^{2\beta}_X(\RR^n)$ satisfying the equation
 \begin{align}
  Z_t=\xi+\int_0^t\sigma\left(Z_s+\tC(Z)_s\right)
  d\mathbf{X}_s,
\quad Z_t'=\sigma(Z_t+\tC(Z)_t).\label{path-rde5}
 \end{align}
and
$Z$ has the estimate similarly to Theorem~$\ref{main theorem}$.
Moreover $Y_t=Z_t+\tC(Z)_t$ is
 a solution to $(\ref{prde1})$.
\item[$(2)$]  Let $Y_t$ be a solution to $(\ref{prde1})$ defined by
$Z\in \mathscr{D}^{2\beta}_X(\RR^n)$.
Then $Z$ is a solution
to $(\ref{path-rde5})$.
Moreover, such a $Z$ is uniquely determined by $Y$.
\item[$(3)$] The transformations defined in $(1)$ and $(2)$ are inverse mapping
each other and the uniqueness of the solution of $(\ref{prde1})$ and 
$(\ref{path-rde5})$ is equivalent.
\end{enumerate}
\end{thm}

\begin{proof}
(1)~The existence and the estimate of the solution follows from
Theorem~\ref{main theorem}.
By $Y_t=Z_t+\tC(Z)_t$ and by the definition of $\tC$, 
we have $\tC(Z)=C(Y)$.
Hence $Z'_t=\sigma(Z_t+C(Y)_t)$ and
$Y_t$ is a solution to (\ref{prde1}).

\noindent
(2) By the definition of $\tC$, $\tC(Z)_t=C(Y)_t$ holds.
Hence $Z$ is a solution to (\ref{path-rde5}).
Also the uniqueness follows from the assumption on $C$.

\noindent
(3) These follows from the assumption on $C$.
\end{proof}

We give sufficient conditions on $C$ under which
$C$ satisfies ${\rm (Condition~ \tilde{C})}$.

\begin{lem}\label{estimate on tC via C}
 Let $C$ be a continuous mapping between $C([0,T], \RR^n)$.
\begin{enumerate}
 \item[{\rm (1)}]
 Assume $C$ satisfies ${\rm (Lip)}_{\rho_1}$ with
 $\rho_1<1$.
	      Let $x\in C([0,T], \RR^n)$.
	      There exists a unique $y\in C([0,T], \RR^n)$ satisfying
$
	      y=x+C(y).
$
	    	      Then $\tC$ satisfies ${\rm (Lip)}_{\rho_1/(1-\rho_1)}$.

 \item[{\rm (2)}] Suppose that $C$ satisfies
${\rm (Lip)}_{\rho_1}$ with
 $\rho_1<1$
	      and ${\rm (BV)}_{\rho_2}$
	      with $\rho_2<1$.
	      Then $\tC$ satisfies ${\rm (BV)}_{\rho_2/(1-\rho_2)}$.
\item[{\rm (3)}] Suppose that $C$ satisfies
${\rm (Lip)}_{\rho}$ and
${\rm (BV)}_{\rho}$ with $\rho<1/2$.
Then $\tC$ satisfies 
${\rm (Lip)}_{\rho'}$ and
${\rm (BV)}_{\rho'}$ with $\rho'=\frac{\rho}{1-\rho}<1$.
	      \end{enumerate}

\end{lem}

\begin{proof}
 (1)~The existence of $y$ follows from the
 fact that the mapping $y\mapsto x+C(y)$
 is contraction.
We have $\tC(x)=C(y)=C(x+\tC(x))$.
  Therefore,
 \begin{align}
  \|\tC(x)-\tC(x')\|_{\infty, [0,t]}\le \rho_1\left(
\|x-x'\|_{\infty,[0,t]}+\|\tC(x)-\tC(x')\|_{\infty,[0,t]}
  \right)
 \end{align}
 which implies
 $\|\tC(x)-\tC(x')\|_{\infty, [0,t]}\le
 \frac{\rho_1}{1-\rho_1}\|x-x'\|_{\infty, [0,t]}$.

 \noindent
 (2)
 We have
 \begin{align*}
  \|\tC(x)\|_{1\hyp var, [s,t]}&=
  \|C(x+\tC(x))\|_{1\hyp var, [s,t]}\\
  &\le
  \rho_2\left(\|x\|_{\infty\hyp var,[s,t]}
  +\|\tC(x)\|_{\infty\hyp var, [s,t]}\right)\\
  &\le
  \rho_2\left(\|x\|_{\infty\hyp var,[s,t]}
  +\|\tC(x)\|_{1\hyp var, [s,t]}\right)
 \end{align*}
 which implies the desired estimate.

\noindent
(3) This follows from (1) and (2).
\end{proof}
 
\begin{exm}\label{example for prde1}
$(1)$
We consider the following $C$:
\begin{align}
 C^i(Y)_t=
\sum_{j=1}^na^i_j\sup_{0\le s\le t}Y^j_s+
\sum_{j=1}^nb^i_j\inf_{0\le s\le t}Y^j_s,\label{example of C}
\end{align}
where $Y^j_t$ and $C^i(Y)_t$ are the $j$-th coordinate and
$i$-th coordinate of
$Y_t$ and $C(Y)_t$ respectively.
By Proposition~\ref{explicit rho} 
and Lemma~\ref{estimate on tC via C}, we see that
this $C$ satisfies the assumption in Theorem~\ref{theorem for prde1}
for sufficiently small $a^i_j, b^i_j$.
In this paper, we do not consider the subtle case as in 
the previous Subsection, {\it e.g.},
$|ab/(1-a)(1-b)|\le 1$ or $a<1, b<1$, etc.
We just mention the following simple result.

Let $a_i<1$ $(1\le i\le n)$ and consider $C$ defined by
$
 C^i(x)_t=a_i\max_{0\le s\le t}x^i(s),
$
where $x_t=(x^i_t)$.
If $a\le -1$, the mapping $C:x=(x_t)(\in C([0,T],\RR)\to 
(a\max_{0\le s\le t}x_s)\in C([0,T],\RR)$ is not a 
strict contraction mapping, but, 
$y=x+C(y)$ $(y\in C([0,T],\RR))$ is uniquely solved as
$y_t=x_t+\frac{a}{1-a}\max_{0\le s\le t}x_s$.
Therefore,
we have explicitly
\[
 \tilde{C}(x)_t=
\left(\frac{a_1}{1-a_1}\max_{0\le s\le t}x^1_s,\ldots,
\frac{a_n}{1-a_n}\max_{0\le s\le t}x^n_s\right).
\]
Hence, this example satisfies the assumption in 
Theorem~\ref{theorem for prde1}.

\noindent
$(2)$ Let $f_i : \RR^n\to\RR$ $(1\le i\le n)$
be Lipschitz functions satisfying 
$({\rm Lip})_{\rho_i}$.
For $x\in C([0,T],\RR^n)$,
we define $C$ by
$
 C^i(x)_t=\max_{0\le s\le t}f_i(x_s).
$
Then $C$ satisfies ${\rm (Lip)}_{\sqrt{\sum_i\rho_i^2}}$ and
${\rm (BV)}_{\sum_{i}\rho_i}$.
Hence, if $\sum_i\rho_i<1$, then
the assumption in Theorem~\ref{theorem for prde1} holds.
This follows from Proposition~\ref{explicit rho} 
and Lemma~\ref{estimate on tC via C}.

\end{exm}

 We now consider (\ref{prrde1}) on $\bar{D}=\{(x_1,\ldots,x_n)~|~x_n\ge 0\}$.
For the moment, we suppose $C$ satisfies 
${\rm (Condition~ \tilde{C})}$
and $\xi$ is chosen so that the solution $\eta$ of
$\eta=\xi+C_0(\eta)$ satisfies $\eta\in \bar{D}$
as we noted before.
Let $Y_t$ be a solution of (\ref{prrde1})
and suppose $Y_t=Z_t+C(Y)_t+\Phi_te_n$ as in
Definition~\ref{definition of the solution of prrde} (2) (ii).
Let $\tilde{Z}_t=Y_t-C(Y)_t$.
 Using $\tC$, we have $Y_t=\tilde{Z}_t+\tC(\tilde{Z})_t$.
 Then
 \begin{align}
  Y_t&=Z_t+C(Y)_t+\Phi_te_n\nonumber\\
  &=Z_t+\tC(\tilde{Z})_t+\Phi_te_n\nonumber\\
  &=Z_t+\tC(Z+\Phi e_n)_t+\Phi_te_n.\label{expression of Y}
 \end{align}
 By (\ref{half space reflection}), we get an equation for
 $\Phi_t$,
 \begin{align}
  \Phi_t=\max_{0\le s\le t}\Bigl\{-\Bigl(
Z^n_s+\tC^n(Z+\Phi e_n)_s\Bigr)\vee 0\Bigr\},
 \end{align}
where $Z^n_s$ and $\tC^n$ is the $n$-th coordinate of $Z_s$ and
$\tC$ respectively.
 This is a nonlinear implicit Skorohod equation.
 This kind of equation appeared in the study of the Euler approximation
 of the solutions for reflected RDEs in \cite{aida-rrde}.
 
 Fix $x\in C([0,T], \RR^n ; x_0=\xi)$ and
 consider a mapping on $C([0,T],\RR ; \phi_0=0)$:
 \begin{align}
  {\cal M}_x(\phi)_t=\max_{0\le s\le t}\Bigl\{-\Bigl(
  x^n_s+\tC^n(x+\phi e_n)_s\Bigr)\vee 0\Bigr\},\qquad
  \phi\in C([0,T], \RR ; \phi_0=0),
 \end{align}
where $x^n$ is the $n$-th coordinate of $x$.
 Now suppose that $x\mapsto \tC^n(x)$ is
a Lipschitz map belonging to $({\rm Lip})_{\kappa}$.
Then
we have
 for any $\phi,\phi'\in C([0,T], \RR ; \phi_0=0)$
 \begin{align}
  \|{\cal M}_x(\phi)-{\cal M}_x(\phi')\|_{\infty,[0,T]}
  \le \kappa\|\phi-\phi'\|_{\infty, [0,T]}.
 \end{align}
  Hence, if $\kappa<1$, that is, $\tC^n$ is
  strict contraction, then ${\cal M}_x$ is a contraction mapping
 for all $x\in C([0,T], \RR^n ; x_0=\xi)$.
 Let us denote the fixed point by $\tL(x)$.
Then we have $\Phi=\tL(Z)$.
 Thus, under the assumption that
$\tilde{C}^n : C([0,T],\RR^n)\to C([0,T],\RR)$ satisfies
${\rm (Lip)}_{\rho}$ with $\rho<1$, we obtain a mapping
 $x(\in C([0,T], \RR^n ; x_0=\xi))\mapsto 
\tL(x)\in C([0,T], \RR ; \phi_0=0)$
 and the equation for $Z$:
 \begin{align}
  Z_t=\xi+\int_0^t\sigma\left(Z_s+\tilde{C}(Z+\tL(Z)e_n)_s
  +\tL(Z)_se_n\right)
  d\mathbf{X}_s.
\label{prrde2}
 \end{align}

We have the following estimate of $\tilde{L}$.

\begin{lem}\label{estimate of tA0}
 Suppose
\begin{itemize}
 \item[{\rm (i)}]  $C$ satisfies $({\rm Condition~ \tilde{C}})$
and $\tC$ satisfies $({\rm BV})_{\rho''}$ for some $\rho''>0$.
\item[{\rm (ii)}] $\tC^n$ satisfies $({\rm Lip})_{\kappa}$ with $\kappa<1$
and $\tilde{C}^n$ satisfies $({\rm BV})_{\kappa'}$ with
$\kappa'<1$.
\end{itemize}
Let
$
\tA(x)_t=\tC(x+\tL(x)e_n)_t+\tL(x)_te_n.
$
Then the following hold.

\begin{enumerate}
 \item[$(1)$] ${\displaystyle
 \|\tL(x)-\tL(x')\|_{\infty,[0,t]}\le
\frac{1+\kappa}{1-\kappa}\|x-x'\|_{\infty,[0,t]}.
}$

\item[$(2)$] $\displaystyle{
   \|\tL(x)\|_{1\hyp var, [s,t]}\le
\frac{1+\kappa'}{1-\kappa'}
\|x\|_{\infty\hyp var,[s,t]}.}
$
\item[$(3)$] $\displaystyle{
  \|\tA(x)-\tA(x')\|_{\infty,[0,t]}\le
\left(\rho'+(1+\rho')\frac{1+\kappa}{1-\kappa}\right)
\|x-x'\|_{\infty,[0,t]}}$,
\item[$(4)$]
$\displaystyle{
\|\tA(x)\|_{1\hyp var, [s,t]}\le
\left(\rho''+(1+\rho'')\frac{1+\kappa'}{1-\kappa'}\right)
\|x\|_{\infty\hyp var, [s,t]}.
}$
\end{enumerate}
 \end{lem}

\begin{proof}
(1)~
 Since $\tL(x)$ satisfies
 \begin{align}
  \tL(x)_t=\max_{0\le s\le t}\left\{-\left(
x^n_s+\tC^n(x+\tL(x)e_n)_s\right)\vee 0\right\},\label{explicit form of tL}
 \end{align}
 we have
 \begin{align*}
  \|\tL(x)-\tL(x')\|_{\infty,[0,t]}&\le
  \|x-x'\|_{\infty, [0,t]}+\|\tC^n(x+\tL(x)e_n)-
  \tC^n(x'+\tL(x')e_n)\|_{\infty, [0,t]}\\
  &\le \|x-x'\|_{\infty, [0,t]}+
  \kappa\left(\|x-x'\|_{\infty, [0,t]}+
\|\tL(x)-\tL(x')\|_{\infty, [0,t]}
  \right),
 \end{align*}
which implies
\begin{align*}
 \|\tL(x)-\tL(x')\|_{\infty,[0,t]}&\le
\frac{1+\kappa}{1-\kappa}\|x-x'\|_{\infty,[0,t]}.
\end{align*}

\noindent
(2)~We have
 \begin{align*}
  \|\tL(x)\|_{1\hyp var, [s,t]}&\le
  \|x^n+\tC^n(x+\tL(x)e_n)\|_{\infty\hyp var, [s,t]}\\
  &\le \|x\|_{\infty\hyp var, [s,t]}+
  \|\tC^n(x+\tL(x)e_n)\|_{\infty\hyp var, [s,t]}\\
  &\le \|x\|_{\infty\hyp var, [s,t]}+
  \kappa'\left(\|x\|_{\infty\hyp var, [s,t]}+
  \|\tL(x)\|_{1\hyp var, [s,t]}\right).
 \end{align*}
Thus, we obtain
$\displaystyle{
   \|\tL(x)\|_{1\hyp var, [s,t]}\le
\frac{1+\kappa'}{1-\kappa'}
\|x\|_{\infty\hyp var,[s,t]}.}
$

\noindent
(3) By using (1) and (2), we have
\begin{align*}
 \|\tA(x)-\tA(x')\|_{\infty,[0,t]}&\le
\rho'\left(\|x-x'\|_{\infty,[0,t]}+\frac{1+\kappa}{1-\kappa}
\|x-x'\|_{\infty,[0,t]}\right)+
\frac{1+\kappa}{1-\kappa}
\|x-x'\|_{\infty,[0,t]},
\end{align*}
which implies the desired result.

\noindent
(4) We have
\begin{align*}
 \|\tA\|_{1\hyp var, [s,t]}&\le
\|\tilde{C}(x+\tL(x)e_n)\|_{1\hyp var, [s,t]}+
\|\tL(x)\|_{1\hyp var, [s,t]}\\
&\le \rho''\left(\|x\|_{\infty\hyp var,[s,t]}+
\|\tL(x)\|_{\infty\hyp var, [s,t]}\right)+
\frac{1+\kappa'}{1-\kappa'}\|x\|_{\infty\hyp var, [s,t]}\\
&\le 
\rho''\left(\|x\|_{\infty\hyp var,[s,t]}+
\frac{1+\kappa'}{1-\kappa'}\|x\|_{\infty\hyp var, [s,t]}
\right)+
\frac{1+\kappa'}{1-\kappa'}\|x\|_{\infty\hyp var, [s,t]}\\
&=\left(\rho''+(1+\rho'')\frac{1+\kappa'}{1-\kappa'}\right)
\|x\|_{\infty\hyp var,[s,t]}.
\end{align*}
\end{proof}

The following lemma follows from
Lemma~\ref{estimate on tC via C}.

\begin{lem}\label{sufficient condition for lemma}
Suppose $C$ satisfies 
 ${\rm (Lip)}_{\rho}$ and
${\rm (BV)}_{\rho}$ with $\rho<1/2$.
Then the assumption of Lemma~$\ref{estimate of tA0}$ 
{\rm (i)} and {\rm (ii)}
hold with $\rho'=\rho''=\kappa=\kappa'=\frac{\rho}{1-\rho}<1$.
\end{lem}

We now state our theorem for (\ref{prrde1}) and give the proof.

\begin{thm}\label{theorem for prrde1}
Let $C$ be a continuous mapping between $C([0,T], \RR^n)$.
Suppose $C$ satisfies the same assumption in Lemma~$\ref{estimate of tA0}$.
Moreover we assume that the solution $\eta$ of $\eta=\xi+C_0(\eta)$ 
satisfies $\eta\in \bar{D}$.
Let $\mathbf{X}\in \mathscr{C}^{\beta}(\RR^d)$.
 Let $\tA$ be the mapping defined in Lemma~$\ref{estimate of tA0}$.
\begin{enumerate}
 \item[$(1)$] There exsits a solution $Z\in
 {\mathscr D}^{2\beta}_X(\RR^n)$ to $(\ref{prrde2})$ and
$Z$ has the estimate similarly to Theorem~$\ref{main theorem}$.
Let $Y_t=Z_t+\tA(Z)_t$ and $\Phi_t=\tL(Z)_t$.
Then
\begin{align}
 Y_t=Z_t+C(Y)_t+\Phi_te_n,\quad Z'_t=\sigma(Y_t),\quad
\Phi_t=L(Z+C(Y))_t \label{From Z to Y}
\end{align}
hold.
That is, $(Y,\Phi)$ is a solution to $(\ref{prrde1})$.
\item[$(2)$] Let $(Y,\Phi)$ be a solution to $(\ref{prrde1})$ 
and $Z$ be a controlled path
appearing in Definition~$\ref{definition of the solution of prrde}$ $(2)$.
Then $Z$ is a solution to $(\ref{prrde2})$.
Moreover, $Z$ is uniquely determined by $Y$ and $\mathbf{X}$.
\item[$(3)$] The transformations defined in $(1)$ and $(2)$ are inverse mapping
each other and the uniqueness of the solution of $(\ref{prrde1})$ and 
$(\ref{prrde2})$ is equivalent.
\end{enumerate}

\end{thm}
   
   \begin{proof}
(1)   By Lemma~\ref{estimate of tA0} and Theorem~\ref{main
   theorem},
   there exists a solution $Z$ to (\ref{prrde2})
and has the estimate given in Theorem~\ref{main theorem}.
By the definition of $\tL$ and $\tC$, we have
$\tL(Z)=L(Z+\tC(Z+\tL(Z)e_n))$ and
$\tC(Z+\tL(Z))=C(Y)$ which shows (\ref{From Z to Y}).

\noindent
$(2)$ The argument by which we derived the equation (\ref{prrde2})
shows the former half part.
$Z$ is uniquely defined by $Y$ and $\mathbf{X}$ only
because $Z_t=\xi+\int_0^t\sigma(Y_s)d\mathbf{X}_s$,
$Y$ is a sum of $Z$ and
a continuous bounded variation path and 
$Z'_t=\sigma(Y_t)$.

\noindent
$(3)$ The invertibility of the mapping follows from the definition.
The latter half statement follows from this property of the 
mapping.
   \end{proof}

 \begin{exm}\label{example for prrde1}
$(1)$
We consider $C$ in (\ref{example of C}).
If $a^i_j, b^i_j$ are sufficiently small, then 
the assumption on $C$ in Lemma~\ref{estimate of tA0} holds by
Proposition~\ref{explicit rho}, 
Lemma~\ref{estimate on tC via C} and
Lemma~\ref{sufficient condition for lemma}.

\noindent
$(2)$ We consider the example in Example~\ref{example for prde1}(2).
Suppose $\sum_{i}\rho_i<1/2$.
Then the assumption on $C$ in Lemma~\ref{estimate of tA0} holds.
This follows from Proposition~\ref{explicit rho},
Lemma~\ref{estimate on tC via C} and
Lemma~\ref{sufficient condition for lemma}.

\noindent
$(3)$
Let $a\in \RR$ and
 we consider Lipschitz functions $f_i$ $(1\le i\le n)$ in
Example~\ref{example for prde1} (2) and define
for $x=(x^i)_{i=1}^n\in C([0,T],\RR^n)$,
\begin{align}
 C(x)_t=\left(
\max_{0\le s\le t}f_1(x_s),\ldots, \max_{0\le s\le t}f_{n-1}(x_s),
\max_{0\le s\le t}f_n(x_s)+a\max_{0\le s\le t}x^n_s
\right).
\end{align}
Suppose $\xi$ is chosen so that $\eta\in \bar{D}$.
For example, if $a<1$, 
$f_n(\eta_1,\ldots,\eta_{n-1},0)\ge 0$ for all $\eta$ and 
$\|\frac{\partial f_n}{\partial\eta_n}\|_{\infty}$ is sufficiently small,
$\xi\in \bar{D}$ is sufficient for $\eta\in \bar{D}$.

We prove that if $a<1/2$ and $\sum_{i=1}^{n}\rho_i$ is sufficiently small,
$C$ satisfies the assumption in Lemma~\ref{estimate of tA0}.

Let
\begin{align*}
  C_f(x)_t=\left(\max_{0\le s\le t}f_1(x_s),\ldots, \max_{0\le s\le t}f_n(x_s)
\right),\quad
C_{f_n}(x)_t=\max_{0\le s\le t}f_n(x_s).
\end{align*}
The equation $y=x+C(y)$ is equivalent to
\[
 y=x+C_f(y)+\frac{a}{1-a}\max_{0\le s\le t}(C_{f_n}(y)_s+x^n_s)e_n=:\Phi_x(y)
\]
If $\sum_i\rho_i$ is sufficiently small, then the mapping
$y\mapsto \Phi_x(y)$ is a strict contraction mapping for all $x$.
Thus, $y=x+C(y)$ is uniquely solved and $\tilde{C}(x)=y-x$ is defined.
Note that 
\[
 \tilde{C}(x)=C_f(x+\tC(x))+
\frac{a}{1-a}\max_{0\le s\le t}(C_{f_n}(x+\tC(x))_s+x^n_s)e_n.
\]
By this expression, 
for any $\ep>0$, if $\sum_i\rho_i$ is sufficiently small,
we have, for any $x,x'\in C([0,T],\RR^n)$,
\begin{align*}
 &\|\tC(x)-\tC(x')\|_{\infty,[0,t]}\le
\ep\left(\|\tC(x)-\tC(x')\|_{\infty,[0,t]}+\|x-x'\|_{\infty,[0,t]}\right)
+\frac{|a|}{1-a}\|x-x'\|_{\infty,[0,t]},\\
&\|\tC(x)\|_{1\hyp var, [s,t]}
\le
\ep\left(1+\frac{|a|}{1-a}\right)\left(\|x\|_{\infty\hyp var,[s,t]}
+\|\tC(x)\|_{\infty\hyp var,[s,t]})\right)
+\frac{|a|}{1-a}\|x^n\|_{\infty\hyp var,[s,t]}.
\end{align*}
This shows that if $a<1/2$ and $\sum_i\rho_i$ sufficiently small,
the assumption of Lemma~$\ref{estimate of tA0}$ 
is satisfied.
\end{exm}

\begin{rem}[Remark on the It\^o and Stratonovich SDEs]
\label{remark on sde}
The equations, (\ref{prde1}) and (\ref{prrde1}) are formulated by using
rough integrals.
We now consider the equations replacing the rough integrals by
It\^o and Stratonovich integrals against the standard Brownian motion
$W_t$.
The solutions are semimartingales and the equations are well-defined.
We need to assume $\sigma$ is Lipschitz continuous and
$\sigma\in C^2_b$ for the It\^o and Stratonovich integrals
respectively.
Under the same assumptions on $C$
in Theorem~\ref{theorem for prde1} and Theorem~\ref{theorem for prrde1},
the existence and the pathwise uniqueness hold 
for the stochastic integral's version
of (\ref{path-rde5}) and (\ref{prrde2}) by the Lipschitz continuity of
their coefficients
which implies the uniqueness of the solutions of
the stochastic integral's version of
(\ref{prde1}) and (\ref{prrde1}).
In Section 6, we consider Stratonovich SDEs corresponding to
(\ref{prde1}) and (\ref{prrde1}) and prove the support theorem
of the solutions (Corollary~\ref{support theorem for psde}).

Consider Example~\ref{example for prrde1} (3).
  In the case of standard Brownian motion,
  this example extends 
  the existence results for solutions in Doney and Zang~\cite{doney-zhang}
slightly.
  Also we can extend the absolutely continuity property
  of the law of $Y_t$ in Yue and Zhang~\cite{yue-zhang}.
  We study this problem in a separate joint paper with Yuki
  Kimura.
\end{rem}

\subsection{Related path-dependent RDEs}\label{subsection 5.3}

We consider the H\"older rough path
$\mathbf{X}$.
Namely, $\omega(s,t)=t-s$.
In this Subsection, we consider RDEs depending on
the $L^p$-norm of the solution.
For simplicity, we consider the case where
$A(x)_t$ is a real-valued process.
That is, 
\begin{itemize}
 \item[(1)] $\sigma\in 
{\rm Lip}^{\gamma}(\RR^n\times \RR\to \mathcal{L}(\RR^d,\RR^n))$.
\item[(2)] We consider the following case:
\begin{itemize}
 \item[(2a)] Let $f\in {\rm Lip}^1(\RR^n,\RR)$ and $A(x)_t=\int_0^tf(x_s)ds$,
\item[(2b)] Let $R>0$, $1<p\le\frac{1}{\beta}$ and set
$A(x)_t=\left\{\int_0^t(|x_s|\wedge R)^pds\right\}^{1/p}$,
\item[(2c)] Let $0<\ep<R<\infty$,
$p>1$ and set
$A(x)_t=\left\{\int_0^t\left(\ep\vee |x_s|\wedge R\right)^p
ds\right\}^{1/p}$.
\end{itemize}
\end{itemize}
Our RDE is of the form,
\begin{align}
 Z_t=\xi+\int_0^t\sigma(Z_s,A(Z)_s))d\mathbf{X}_s, \label{path-rde2}
\end{align}
as before.
In the case (2a), the equation reads
$
 Z_t=\xi+\int_0^t\sigma(Z_s,\Psi_s)d\mathbf{X}_s,
\Psi_t=\int_0^tf(Z_s)ds,
$
which is usual RDE and we have the existence and the uniqueness
of the solutions.
We consider the case (2b).
Clearly, $({\rm Lip})_{1}$ holds.
For simplicity, we write $f(x)=|x|\wedge R$.
Note that
\begin{align}
 A(x)_t-A(x)_s&\le\left\{\int_s^t|f(x_u)-f(x_s)|^pdu\right\}^{1/p}+
|f(x_s)|(t-s)^{1/p}\nonumber\\
&\le 
\left(\|x\|_{\infty\hyp var,[s,t]}+R\right)
(t-s)^{1/p},\quad
0\le s<t\le T.\label{estimate of Axst}
\end{align}
Hence noting a remark in Example~\ref{example of A} (3), we see that
the solution exists and a priori estimate holds.
Actually, we can prove the uniqueness of the solution under the additional 
assumption that $\xi\ne 0$.

\begin{pro}\label{pro of 2b}
Assume $(1)$ and ${\rm (2b)}$ in the above.
Further we assume $f(|\xi|)(=:\ep)\ne 0$.
Then the solution of $(\ref{path-rde2})$ is unique.
\end{pro}

\begin{lem}\label{lemma for pro of 2b}
Assume the same assumption as in Proposition~$\ref{pro of 2b}$.
Here we allow $p>1$.
 We have the following estimates.
Below, $C_i$ $(i=1,2)$ are polynomial functions
and $\omega(s,t)=t-s$, $\tomega(s,t)=t^{1/p}-s^{1/p}$.
\begin{align}
 \|A(x)\|_{1\hyp var, [s,t]}&\le C_1(\ep^{-1},R,\|x\|_{\beta,[0,t]})
\left(\tomega(s,t)+\omega(s,t)\right),
\label{estimate of Ax2}\\
\|A(x)-A(y)\|_{1\hyp var, [s,t]}&\le
C_2(\ep^{-1},R,\|x\|_{\beta,[0,t]},\|y\|_{\beta,[0,t]})\|x-y\|_{\infty,[0,t]}
\left(\tomega(s,t)+\omega(s,t)\right). 
\label{estimate of Axy}
\end{align}
\end{lem}

\begin{proof}
 We have
\begin{align*}
 A(x)_t'=\frac{1}{p}f(x_t)^p\left(\int_0^tf(x_u)^pdu\right)^{\frac{1}{p}-1}.
\end{align*}
Also, we have
\begin{align*}
 |f(x_u)-f(x_0)|\le \|x\|_{\beta}u^{\beta}.
\end{align*}
Hence 
\begin{align*}
 |f(x_u)|\ge \frac{\ep}{2}\quad
\text{for 
$u\le \left(\frac{\ep}{2\|x\|_{\beta}}\right)^{1/\beta}$},
\end{align*}
which implies
\begin{align*}
 \left(\int_0^tf(x_u)^pdu\right)\ge
\left(\frac{\ep}{2}\right)^{p}
\left\{t\wedge \left(\frac{\ep}{2\|x\|_{\beta}}\right)^{1/\beta}\right\}
\end{align*}
and
\begin{align}
A(x)_t^{1-p}&\le \left(\frac{2}{\ep}\right)^{p-1}
\left\{\frac{1}{t^{1-1/p}}+\left(\frac{2\|x\|_{\beta}}{\ep}\right)
^{\frac{p-1}{\beta p}}
\right\}.
\label{lower bdd of Ax}
\end{align}
Therefore,
\begin{align*}
 \|A(x)\|_{1\hyp var, [s,t]}&
\le \frac{R^p}{p}\left(\frac{2}{\ep}\right)^{p-1}
\left\{
p(t^{1/p}-s^{1/p})+
\left(\frac{2\|x\|_{\beta}}{\ep}\right)
^{\frac{p-1}{\beta p}}(t-s)
\right\}.
\end{align*}
Let $y$ be another $\beta$-H\"older continuous path with
$|y_0|=\ep$.
We have
\begin{align*}
 A(x)_t'-A(y)_t'&=\frac{1}{p}\frac{f(x_t)^p-f(y_t)^p}{A(x)_t^{p-1}}
+\frac{f(y_t)^p}{p}\frac{A(y)_t^{p-1}-A(x)_t^{p-1}}
{(A(x)_tA(y)_t)^{p-1}}\\
&=:I_1(t)+I_2(t).
\end{align*}
Using the elementary inequality,
$\left|\frac{b^{r}-a^r}{b-a}\right|\le r\max\left(a^{r-1}, b^{r-1}\right)$
$(a,b>0, r\in \mathbb{R})$, we have
\begin{align*}
 \int_s^t|I_1(u)|du&\le
\left(\frac{2R}{\ep}\right)^{p-1}
\left(p\left(t^{1/p}-s^{1/p}\right)+
\left(\frac{2\|y\|_{\beta}}{\ep}\right)
^{\frac{p-1}{\beta p}}(t-s)
\right)\|x-y\|_{\infty, [s,t]}.
\end{align*}
Also we have
\begin{align*}
 &\left|A(x)_t^{p-1}-A(y)_t^{p-1}\right|\\
&\quad\le
(p-1)R^{p-1}\|x-y\|_{\infty, [0,t]}
\frac{2t}{\ep}\left(\frac{1}{t^{1/p}}+
\left(\frac{2\|x\|_{\beta}}{\ep}\right)^{1/(\beta p)}+
\left(\frac{2\|y\|_{\beta}}{\ep}\right)^{1/(\beta p)}
\right).
\end{align*}
Hence using (\ref{lower bdd of Ax}),
\begin{multline*}
\int_s^t|I_2(u)|du
\le
\frac{(p-1)R^{2p-1}}{p}\left(\frac{2}{\ep}\right)^{2p-1}
\|x-y\|_{\infty, [0,t]}\\
\times\int_s^t
\left\{u^{1-1/p}+
u\left(\left(\frac{2\|x\|_{\beta}}{\ep}\right)^{1/(\beta p)}+
\left(\frac{2\|y\|_{\beta}}{\ep}\right)^{1/(\beta p)
}\right)\right\}
 \left\{\frac{1}{u^{1-1/p}}+\left(\frac{2\|x\|_{\beta}}{\ep}\right)
^{\frac{p-1}{\beta p}}
\right\}^2du,
\end{multline*}
which completes the proof.
\end{proof}

\begin{proof}[Proof of Proposition~$\ref{pro of 2b}$]
 Let $Z_t$ and $\tZ_t$ be solutions to (\ref{path-rde2}) and
suppose $\|Z-\tZ\|\ne 0$.
We may assume $z_t:=\|Z-\tZ\|_{\infty, [0,t]}>0$ for all
$t>0$.
Otherwise, that is, if 
$t_0=\inf\{t\ge 0~|~\|Z-\tZ\|_{\infty, [0,t]}>0\}>0$ happens, then
it suffices to consider solutions $Z_t$ and $\tZ_t$ from
$t_0$.
We have
\begin{align*}
 Z_t-\tZ_t&=\int_0^t
[A(Z)_s-A(\tZ)_s]dG_s
+\int_0^t
[Z_s-\tZ_s]dH_s,
\end{align*}
where $G_s$ and $H_s$ are
$\mathcal{L}(\RR^d,\RR^n)$-valued maps which act from the right as
\begin{align*}
 \eta G_s&=\int_0^s\left(\int_0^1(D_2\sigma)(Z_u,A(\tZ)_u+
\theta(A(Z)_u-A(\tZ)_u))
d\theta\right)[\eta]dX_u,\\
 \eta H_s&=\int_0^s
\left(\int_0^1(D_1\sigma)(\tZ_u+\theta(Z_u-\tZ_u),A(\tZ)_u)
d\theta\right)[\eta]dX_u
\end{align*}
and the integrals are Stieltjes integral and the rough integral.
The rough integral is well-defined because we assume 
$\sigma\in\Lip^{\gamma}$.
Clearly, $G_s, H_s$ are controlled paths of $\mathbf{X}$.
We fix $t$ and consider the processes in the time interval $0\le s\le t$.
Let $F_s=z_{t}^{-1}(A(Z)_s-A(\tZ)_s)$
and set $\tF_s=\int_0^sF_udG_u$.
By Lemma~\ref{lemma for pro of 2b} and a priori estimates of
$Z, \tZ$, we have
$|F_{u,v}|\le K\bomega(u,v)^{\beta}$,
where $\bomega(u,v)=\tomega(u,v)+\omega(u,v)$ and
the positive constant $K$ depends only on
$\sigma, p, \beta, \mathbf{X}$.
Then we have the following expansion,
\begin{align}
& Z_s-\tZ_s=z_t\tF_s+\int_0^s[Z_u-\tZ_u]dH_u
=z_t\tF_s+\sum_{k=1}^{n-1}I_k(s)+J_{n}(s),\quad n\ge 1,
\label{expansion of Z-tZ}\\
& I_1(s)=z_t\tF_s,\quad J_0(s)=Z_s-\tZ_s,
\quad I_k(s)=\int_0^sI_{k-1}(u)dH_u,
\quad J_{n}(s)=\int_0^sJ_{n-1}(u)dH_u.
\end{align}
We now consider $(\bomega,\beta)$-H\"older rough path
$\mathbf{X}(A)$
whose first level path is $F_{u,v}\oplus X_{u,v}\in \RR^{n+d}$ and
the iterated integrals of them are defined in a natural way
using $\mathbb{X}_{u,v}$.
Let $X(A)^k_{u,v}(\in (\RR^{d+n})^{\otimes k})$ be the $k$-level path $(k=1,2)$.
Then
it holds that
$|X(A)^k_{u,v}|\le K\bomega(u,v)^{k\beta}$
$(k=1,2)$.
We can regard
$F, G, H, Z-\tZ$ as controlled paths of $(\bomega,\beta)$-H\"older rough
path $\mathbf{X}(A)$.
Therefore using the estimate of the higher order iterated integrals,
we obtain
\begin{align}
 \max_{0\le s\le t}|I_k(s)|\le
z_t C^k\frac{\bomega(0,t)^{k\beta}}{\left(k\beta\right)!},
\quad
\max_{0\le s\le t}|J_n(t)|\le
C^n\frac{\bomega(0,t)^{n\beta}}{\left(n\beta\right)!},
\end{align}
where $C$ is a certain constant which may depend on $\mathbf{X}$.
Thus, for all $0\le t\le T$, 
there exist positive numbers
$C, C'$ which may depend on $\mathbf{X}$ such that
\[
 z_t
\le z_t C' \bomega(0,t)^{\beta}
+C^n\frac{\bomega(0,t)^{n\beta}}{\left(n\beta\right)!}
\]
which implies for sufficiently small $t$ and for all $n$
\[
 z_t\le (1-C'\bomega(0,t)^{\beta})^{-1}
C^n\frac{\bomega(0,t)^{n\beta}}{(n\beta)!}
\]
and so $z_t=0$ for sufficiently small $t$.
This completes the proof.
\end{proof}

\begin{rem}
 In the above argument, we assume $1<p\le 1/\beta$
and we use a priori estimate of the H\"older norm of the
solution $Z$.
When $p>1/\beta$, the path of $A(x)$ just satisfies very low
regularity around $0$.
Hence we cannot apply our argument directly to this case.
However note that $\beta$-H\"older rough path $\mathbf{X}$ can be regarded
as a $1/p$-H\"older rough path and
$A(x)\in \C^{1\hyp var, 1/p}$.
Hence, we can extend our result by considering controlled paths
of $\mathscr{D}^{\lfloor p\rfloor/p}_X$ and $\C^{1\hyp var, 1/p}$.
That is, under the assumption $\sigma\in \Lip^{\lceil p\rceil}$ and
$x_0\ne 0$, for all $p>1$,
we can prove the existence and uniqueness of the solutions
of (\ref{path-rde2}) in the case of 
$A(x)_t=\left\{\int_0^t(|x_s|\wedge R)^p\right\}^{1/p}$.
However the assumption $\sigma\in \Lip^{\lceil p\rceil}$ seems unnecessary.
\end{rem}

In the case of (2c), 
we can prove the existence and the uniqueness of the solutions
in a similar argument to Proposition~\ref{pro of 2b}.
However, unfortunately, we cannot prove uniform estimate of the solutions
when $p\to\infty$ and so the estimates cannot be applied to
the case $A(x)_t=\max_{0\le s\le t}f(x_s)$.
Note that any $(\omega,\beta)$-H\"older rough paths
$\mathbf{X}$ is a $(\bomega,\beta)$-H\"older rough path.
Let $\mathscr{D}^{2\beta,\bomega}_X(\RR^n)$ be the associated controlled path
spaces defined by $\bomega$.
Similarly to the case of (\ref{path-rde1}),
the integral in (\ref{path-rde2}) is also well-defined.

\begin{pro}\label{pro of 2c}
Let us consider the situation $(1)$ and ${\rm (2c)}$ above.
Then there exists a unique solution to $(\ref{path-rde2})$.
\end{pro}

Similarly to Proposition~\ref{pro of 2b}, we need the following lemma.
The proof is almost similar to Lemma~\ref{lemma for pro of 2b}
and we omit it.

\begin{lem}\label{lemma for pro of 2c}
Assume $(1)$ and ${\rm (2c)}$.
 We have the following estimates.
\begin{align}
 \|A(x)\|_{1\hyp var, [s,t]}&\le C_1(\ep^{-1},R)\tomega(s,t),
\label{estimate of Ax3}\\
\|A(x)-A(y)\|_{1\hyp var, [s,t]}&\le
C_2(\ep^{-1},R)\|Df\|_{\infty}\|x-y\|_{\infty,[0,t]}\tomega(s,t). 
\label{estimate of Axy2}
\end{align}
\end{lem}

\begin{proof}[Proof of Proposition~$\ref{pro of 2c}$]
 We can proceed as in Section~3 
by adopting the control function
$\bar{\omega}(s,t)=|t-s|+t^{1/p}-s^{1/p}$
with the help of Lemma~\ref{lemma for pro of 2c}.
\end{proof}

     \section{Continuity property and Support theorem}\label{support theorem}
     Let $W_t$ be a standard $d$-dimensional Brownian motion.
     Then we have the notion of the It\^o and Stratonovich SDEs
     driven by $W_t$.
     Let ${\bf W}$ be the associated Brownian rough path
     defined by the Stratonovich integral.
     When $A\equiv 0$ and $\sigma\in C^3_b$,
     the solution $Z({\bf W})$ is equal to
     the solution to the Stratonovich SDE in It\^o's calculus
     for almost all $W$.
     This is checked, for example,
     by the Wong-Zakai theorem and Lyon's continuity
     theorem.
In our cases, we do not have the uniqueness.
However, under the assumption that $\sigma\in C^2_b$,
the Wong-Zakai theorems hold
     for reflected SDEs, perturbed SDEs and perturbed reflected
     SDEs. By using this and
     Proposition~\ref{continuity of Z},
     we can prove the continuity of the solution mapping
of the SDEs at Lipschitz paths
     in the rough path topology.
     We prove support theorem for the above mentioned processes
     by using the continuity.
          
Let us recall the definition of the Brownian rough path.
Let $\Theta^d=C([0,T], \RR^d)$
and $\mu$ be the Wiener measure on $\Theta^d$.
Let $W\in \Theta^d$ and
$W^N_t$ be the dyadic polygonal approximation of $W$, that is,
\begin{align}
 W^N_t=W_{t^N_{i-1}}+2^{-N}T^{-1}(t-t^N_i)
 W_{t^N_{i-1},t^N_i},
 \quad t^N_{i-1}\le t\le t^N_i,
~t^N_i=2^{-N}iT,~0\le i\le 2^N.
\end{align}
Let ${\mathbb W}^N_{s,t}=\int_s^tW^N_{s,u}\otimes dW^N_u$.
Let us define
\begin{align}
 \Omega_1&=\left\{W\in \Theta^d~|~
\mbox{
$(W^N_{s,t}, {\mathbb W}^N_{s,t})$ converges
in ${\mathscr C}^{\beta}(\RR^d)$ for all
$\beta<1/2$}
\right\}.
\end{align}
Here ${\mathscr C}^{\beta}$ is defined
by $\omega(s,t)=|t-s|$, that is,
${\mathscr C}^{\beta}$ denotes the
set of usual H\"older rough paths.

It is known that $\mu(\Omega_1)=1$ and the limit point
of $(W^N_{s,t}, {\mathbb W}^N_{s,t})$ for $W\in \Omega_1$
is called the Brownian rough path.
We identify the element of $W\in \Omega_1$
and the associated Brownian rough path ${\bf W}$.
Clearly $\{\mathbf{W}~|~W\in \Omega_1\}\subset
\mathscr{C}^{\beta}_g(\RR^d)$ holds for any $\beta<1/2$.

Let $\sigma\in C^2_b(\RR^n,{\cal L}(\RR^d,\RR^n))$ and
consider a Stratonovich reflected SDE,
\begin{align}
 dY_t&=\sigma(Y_t)\circ dW_t+d\Phi_t,\quad Y_0=\xi\in \bar{D}.
 \label{stratonovich rsde}
\end{align}
We write $Z_t=Y_t-\Phi_t$.
The corresponding solution
$Y^N_t$ which is obtained by replacing $W_t$ 
by $W^N_t$ is called the Wong-Zakai approximation of
$Y_t$.
We also denote the corresponding reflected term by
$\Phi^N$ and set $Z^N=Y^N-\Phi^N$.
It is proved in \cite{aida-sasaki, zhang, aida} that 
the Wong-Zakai approximations of the solution to
a reflected Stratonovich SDE under (A), (B) and (C)
converge to the solution in the uniform convergence topology
almost surely.
Note that a similar statement under the conditions (A) and (B)
is proved in \cite{aida}.
See also previous results \cite{doss, evans-stroock}.
By using the result in \cite{aida-sasaki, zhang},
a support theorem for the reflected diffusion under
the conditions (A), (B) and (C) is proved by
Ren and Wu~\cite{ren-wu}.
We now prove a support theorem for the reflected diffusion under
(A) and (B) by using the estimates in rough path analysis in this paper
and in \cite{aida, aida-sasaki}.
First we note the following results.

\begin{lem}\label{aida-previous}
 Assume $\sigma\in C^2_b(\RR^n,{\cal L}(\RR^d,\RR^n))$.
 We consider the solution $(Y, Z, \Phi)$
 to $(\ref{stratonovich rsde})$ and
 their Wong-Zakai approximations $(Y^N, Z^N, \Phi^N)$.
\begin{enumerate}
 \item[$(1)$]~Assume $D$ satisfies condition {\rm (A), (B), (C)}.
Let
\begin{align}
 \Omega_2&=
 \left\{W\in \Theta^d~\Big |~
 \max_{0\le t\le T}\left\{|Y^N_t-Y_t|+|Z^N_t-Z_t|+
 |\Phi^N_t-\Phi_t|\right\}\to
0~~\mbox{as $N\to \infty$}\right\}.
\end{align}
Then $\mu(\Omega_2)=1$.
\item[$(2)$]~Assume {\rm (A), (B)} hold.
Then there exists an increasing sequence $\{N_k\}\subset \mathbb{N}$
such that $\mu(\Omega_3)=1$ holds where,
\begin{align}
 \Omega_3&=\left\{W\in \Theta^d~\Big |~
 \max_{0\le t\le T}
 \left\{|Y^{N_k}_t-Y_t|
+|Z^{N_k}_t-Z_t|+|\Phi^{N_k}_t-\Phi_t|
 \right\}
 \to
0~~\mbox{as $k\to \infty$}
 \right\}.
 \label{omega3}
\end{align}
\end{enumerate}
\end{lem}

\begin{proof}
 (1)~This is proved in Lemma~5.1 in \cite{aida-rrde}.

 \noindent
 (2)~It is proved in \cite{aida}
 that $\max_{0\le t\le T}|Y^N_t-Y_t|$ converges to
$0$ in probability under (A) and (B).
This and the moment estimate in \cite{aida-sasaki}
 implies that
 $\lim_{N\to\infty}E[\max_{0\le t\le T}|Y^N_t-Y_t|^p]=0$ for any
 $p\ge 1$ and
 there exists a subsequence $N_k\uparrow \infty$
 such that $\max_{0\le t\le T}
 \left|Y^{N_k}_t-Y_t\right|\to 0$ $\mu$-almost surely.
 By using this and by a similar proof to Lemma 5.1 in \cite{aida-rrde},
 we can prove 
 (\ref{omega3}) by taking a subsequence if necessary.
\end{proof}

   We now consider the Stratonovich SDEs
   corresponding to (\ref{prde1}) and (\ref{prrde1}).

   \begin{align}
    Y^p_t=\xi+\int_0^t\sigma(Y^p_s)\circ dW_s
    +C(Y^p)_t,\label{psde1}\\
      Y^{pr}_t=\xi+\int_0^t\sigma(Y^{pr}_s)\circ dW_s
  +C(Y^{pr})_t+\Phi_te_n,\label{prsde1}
   \end{align}

   We assume $\sigma\in C^2_b$ and
   $C$ satisfies the same assumption as in
   Theorem~\ref{theorem for prde1} and
   Theorem~\ref{theorem for prrde1} respectively.
   
   We can transform these equations to the following equation
   with certain $A$ which satisfies
   ${\rm (Lip)}_{\rho}$ and ${\rm (BV)}_{\rho}$ for some $\rho>0$
   in a similar way to (\ref{prde1}) and (\ref{prrde1}),
   \begin{align}
    Z_t=\xi+\int_0^t\sigma(Z_s+A(Z)_s)\circ dW_s.
    \label{transformed prrde1}
   \end{align}
The relation between $Y(=Y^{p}\,\text{or}\, Y^{pr})$ and
$Z$ is given as
$Y_t=Z_t+A(Z)_t$.
      Clearly, if we consider an ODE which is obtained by
replacing $W$ by a Lipschitz path $h$,
then the solution is unique.

   In \cite{aida-kikuchi-kusuoka}, we proved a Wong-Zakai type
 theorem for the above Stratonovich SDEs
 under the conditions on $A$: 
(A1), (A2) and (A3).
These conditions are weaker than 
${\rm (Lip)}_{\rho}$ and 
 ${\rm (BV)}_{\rho}$.
Thus we have the following.

  \begin{lem}
   Suppose $A$ satisfies
   ${\rm (Lip)}_{\rho}$ and ${\rm (BV)}_{\rho}$ for some $\rho>0$
   and $\sigma\in C^2_b(\RR^n,{\cal L}(\RR^d,\RR^n))$.
  Let us consider the solution $Z$
 to $(\ref{transformed prrde1})$ and
 the Wong-Zakai approximation $Z^{N}$ defined by
$W^N$.
Let
\begin{align}
 \Omega_4&=
 \left\{W\in \Theta^d~\Big |~
 \max_{0\le t\le T}|Z^{N}_t-Z_t|
\to
0~~\mbox{as $N\to \infty$}\right\}.
\end{align}
Then $\mu(\Omega_4)=1$.
  \end{lem}

 We prove support theorems for the solutions
 to (\ref{stratonovich rsde}), (\ref{psde1})
 and (\ref{prsde1})
 as an application of the results in Section~\ref{a continuity property}.
For such purpose,
it is important to obtain the support of ${\bf W}$.
The following is due to Ledoux-Qian-Zhang~\cite{lqz}.
More general results can be found in \cite{friz-victoir}.
 \begin{thm}\label{support of BRP}
  Let $\beta<1/2$.
Let $\hat{\mu}$ be the law of $\mathbf{W}$.
Then we have
${\rm Supp}\,\hat{\mu}=\mathscr{C}^{\beta}_g(\RR^d)$,
where ${\rm Supp}\,\hat{\mu}$ denotes the topological support of
$\hat{\mu}$.
 \end{thm}

 In Remark~\ref{Solinfty},
 we define a subset of the solution mapping
 $Sol_{\infty}(\mathbf{X})$ $(\mathbf{X}\in \mathscr{C}^{\beta}_g(\RR^d))$.
We see the topological support of the selection mapping
with values in 
$\cup_{\mathbf{X}\in\mathscr{C}^{\beta}_g(\RR^d))}Sol_{\infty}(\mathbf{X})$
as follows.
 
 \begin{thm}\label{general support theorem}
Let $\nu$ be a probability measure on $\mathscr{C}^{\beta}_g(\RR^d)$.
Let $S$ be a subset of
$\mathscr{C}^{\beta}_g(\RR^d)$.
We assume ${\rm Supp}\,\nu=\mathscr{C}^{\beta}_g(\RR^d)$ and 
$\nu(S)=1$. Let us consider RDE $(\ref{path-rde1})$ and the
solution $Z(\mathbf{X})$ under the same assumption in
Theorem~$\ref{main theorem}$.
We assume the solution for any smooth rough path is unique.
Let ${\cal I} :
 \mathbf{X}(\in S)\mapsto
 Z(\mathbf{X})(\in Sol_{\infty}(\mathbf{X}))\in 
\C^{\beta-}([0,T],\RR^n)$ be a
measurable mapping with respect to the
$\nu$-completed Borel $\sigma$-field.
  Then we have
$
 {\rm Supp}\,({\cal I}_{\ast}\nu)=
 \overline{\{Z(h)~|~h\in \C^1\}}^{\C^{\beta-}},
$
  where ${\rm Supp}\,({\cal I}_{\ast}\nu)$ denotes the
  topological support of the image measure of $\nu$ by
${\cal I}$.
     \end{thm}
\begin{proof}
The inclusion ${\rm Supp}\,({\cal I}_{\ast}\nu)\subset
 \overline{\{Z(h)~|~h\in \C^1\}}^{\C^{\beta-}}$
 follows from the definition of $Sol_{\infty}(\mathbf{X})$.
  The converse inclusion follows from the continuity of 
the multivalued solution mapping at the set of smooth rough paths
which follows from Proposition~\ref{continuity of Z} and
 the assumption on $\nu$.
\end{proof}

At the moment, we do not have the uniqueness theorem for
(\ref{rrde1}), (\ref{prde1}) and (\ref{prrde1}).
However, the strong solutions exist uniquely
for the corresponding Stratonovich SDEs driven by 
Brownian motion under the smoothness assumption on
$\sigma$.
Moreover, the Wong-Zakai theorem hold for them and 
this convergence theorem gives selection mappings $\mathcal{I}$
in Theorem~\ref{general support theorem}
for such cases and we can obtain the support theorem for them.

 \begin{cor}\label{support theorem for reflected diffusion}
  Assume $D$ satisfies {\rm (A)} and {\rm (B)}
  and $\sigma\in C^2_b$.
  Let $Y$ be the solution to $(\ref{stratonovich rsde})$.
   Let $0<\beta<1/2$.
 Let $P^Y$ be the law of $Y$ on
  $\C^{\beta}([0,T], \RR^n~|~Y_0=\xi)$.
Then the support of $P^Y$ is given by
  \begin{align}
 {\rm Supp}\,(P^Y)=
   \overline{\{Y(h)~|~h\in \C^1(\RR^d)\}}^{\C^{\beta}}.
   \label{reflected sde skelton set}
\end{align}
 \end{cor}

  \begin{proof}
For $\mathbf{X}=(X_{s,t},\XX_{s,t})\in \mathscr{C}^{\beta}_g(\RR^d)$, let
$X^N_t$ be the dyadic polygonal approximation of $X$
similarly defined as $W^N$.
Let $Y^N, Z^N, \Phi^N$ be the solution to (\ref{stratonovich rsde})
driven by $X^N$.
Let $\{N_k\}$ be the increasing sequence in 
Lemma~\ref{aida-previous} (2).
Define
\begin{align}
 \tilde{\Omega}_3&=\left\{\mathbf{X}\in \mathscr{C}^{\beta}_g(\RR^d)~\Big |~
 \text{
$Y^{N_k}_t$, $Z^{N_k}_t$ and
$\Phi^{N_k}_t$
converges uniformly on $[0,T]$}~\text{as $k\to \infty$}
 \right\}
 \label{tomega3}
\end{align}
and 
\begin{align}
 Y_t(\mathbf{X})=\lim_{k\to\infty}Y^{N_k}_t,
\,\,
Z_t(\mathbf{X})=\lim_{k\to\infty}Z^{N_k}_t,
\,\,
\Phi_t(\mathbf{X})=\lim_{k\to\infty}\Phi^{N_k}_t,
\quad \mathbf{X}\in \tilde{\Omega}_3.
\label{YZPhi on tOmega3}
\end{align}
$\tilde{\Omega}_3$ is a Borel measurable subset of
$\mathscr{C}^{\beta}_g(\RR^d)$
and $Y$, $Z$ and $\Phi$ are Borel measurable mapping
defined on $\tilde{\Omega}_3$.
By $Y^{N}_t=Z^N_t+\Phi^N_t=Z^N_t+L(Z^N)_t$
and the continuity property of $L$,
$Y_t(\mathbf{X})=Z_t(\mathbf{X})+L_t(Z(\mathbf{X}))$
$(\mathbf{X}\in \tilde{\Omega}_3)$ holds.
Let $\hat{\Omega}_3=\tilde{\Omega}_3\cap \{\mathbf{W}~|~W\in \Omega_1\}$.
Then $\hat{\mu}(\hat{\Omega}_3)=1$ and
\begin{align*}
 Y=Y(\mathbf{W})=Z(\mathbf{W})+L(Z(\mathbf{W})),\quad 
\mathbf{W}\in \hat{\Omega}_3
\end{align*}
holds.
Note that
$L : \C^{\beta-}([0,T],\RR^n; x_0=\xi)\to \C^{\beta-}([0,T],\RR^n)$
is a continuous mapping.
This follows from Lemma~\ref{lemma for A} (2), 
Lemma~\ref{estimate of phi} and Lemma~\ref{Holder continuity}.
Hence it suffices to apply Theorem~\ref{general support theorem} to the case
$\mathcal{I}(\mathbf{W})=Z(\mathbf{W})$, $S=\hat{\Omega}_3$ and
$\nu=\hat{\mu}$ to obtain the support theorem in the topology of
$\C^{\beta-}$.
Since we can choose any $\beta\in (0,1/2)$,
this completes the proof.
           \end{proof}

	   Similarly, we have the following result.
	   Since the proof is similar to that of
	   Corollary~\ref{support theorem for reflected diffusion},
	   we omit the proof.

  \begin{cor}\label{support theorem for psde} We consider the
   solutions $Y^{p}$ and $Y^{pr}$ to $(\ref{psde1})$ and
   $(\ref{prsde1})$
   respectively. Let $0<\beta<1/2$.
   We consider the laws of $P^{Y^p}$ and $Y^{pr}$ on
   $\C^{\beta}$. Then we have
      \begin{align}
 {\rm Supp}\,(P^{Y^p})&=
 \overline{\{Y^p(h)~|~h\in \C^1(\RR^d)\}}^{\C^{\beta}},\\
 {\rm Supp}\,(P^{Y^{p,r}})&=
 \overline{\{Y^{pr}(h)~|~h\in \C^1(\RR^d)\}}^{\C^{\beta}}.
   \end{align}
  \end{cor}

  \section{Appendix}
  \subsection{Uniqueness of the Gubinelli derivative}

  Let $Y$ be a continuous path on $\RR^n$
 and suppose that
 there exist $(Z, Z'), (\tZ,\tZ')\in {\mathscr D}^{2\theta}_X(\RR^n)$
 and continuous bounded variation paths
 $(\Psi_t)_{0\le t\le T}, (\tPsi_t)_{0\le t\le T}$
 such that $Y_t=Z_t+\Psi_t=\tZ_t+\tPsi_t$~$(0\le t\le T)$.
 We discuss conditions under which $Z'_t=\tZ'_t$ holds for
 all $t$.

 We consider the following condition 
$\mathrm{C}(\delta, p, \xi)$
on a continuous curve $X$ on $\RR^d$ $(d\ge 2)$,
 where $0<\delta<\pi/2$, $p>1$ and $\xi$ is a unit vector.
This is related to the property of the truly roughness of the
   path (\cite{friz-hairer}).
Below we denote the angle between two non-zero vectors $v_1$ and $v_2$ by
 $\theta(v_1,v_2)$, where $0\le \theta(v_1,v_2)\le \pi$.

\bigskip
\noindent
$\mathrm{C}(\delta,p,\xi)$~
For any interval $[s,t]\subset [0,T]$,
 there exists a sequence of partitions
 ${\cal P}_N=\{s=t^N_0<\cdots <t^N_{k(N)}=t\}$
 of $[s,t]$ such that $\lim_{N\to\infty}|{\cal P}_N|=0$
 and
 \begin{align}
  \lim_{N\to\infty}
  \sum_{i\in {\cal P}_N(X,\xi,\delta)}
    |X_{t_{i-1},t_i}|^{p}
=\infty,\label{divergence of variation norm}
 \end{align}
 where
  \begin{align}
   {\cal P}_N(X,\xi,\delta)=\left\{1\le i\le k(N)~|
\mbox{$\theta\left(X_{t_{i-1},t_i}, \xi\right)\le\frac{\pi}{2}-\delta$
  \,or\,
 $\theta\left(X_{t_{i-1},t_i}, -\xi\right)\le\frac{\pi}{2}-\delta$
 holds}\right\}.\label{restriction on X}
  \end{align}

\bigskip

   It is easy to check
   that almost all sample paths 
of a fractional Brownian motion(=fBm) $W_t$
whose Hurst parameter $0<H<1$ satisfies
${\rm C}(\delta,p,\xi)$ for
$p<1/H$ and sufficiently small $\delta$.

The proof is as follows.
  Let $e_1=\xi$. We choose orthonormal vectors $e_2,\ldots,e_d$
 in $\RR^d$ which are orthogonal to $e_1$.
 Let
 $W^k_t:=(W_t,e_k)$
 and $W^{\perp}_t=\sum_{k=2}^dW^k_te_k$.
 Then by the rotational invariance of
 $W_t$,
 $(W^1_t,\ldots,W^d_t)$ are also
 fBm with the Hurst parameter $H$.
 Let ${\cal P}_N$ be the dyadic partition of
 $[s,t]$, that is, $t^N_i=s+i2^{-N}(t-s)$~$(0\le i\le 2^N)$.
 The condition $i\in {\cal P}_N(W,e_1,\delta)$ is equivalent to
 \begin{align}
  |W^1_{t^N_{i-1},t^N_i}|\ge (\tan\delta)|W^{\perp}_{t^N_{i-1},t^N_i}|.
  \label{restriction equivalent}
 \end{align}
Below, we write $W_{t^N_{i-1},t^N_{i}}=W_{i-1,i}$ and so on for simplicity.
Let $p>1$.
We have
\begin{align*}
& I_N:=\sum_{i\in \mathcal{P}_N(W,e_1,\delta)}|W_{i-1,i}|^p
=\sum_{i=1}^{2^N}|W_{i-1,i}|^p
1_{|W^1_{i-1,i}|\ge (\tan\delta) |W^{\perp}_{i-1,i}|}\\
&\quad\ge \sum_{i=1}^{2^N}|W^1_{i-1,i}|^p
1_{|W^1_{i-1,i}|\ge (\tan\delta) |W^{\perp}_{i-1,i}|}\\
&\ge \sum_{i=1}^{2^N}|W^1_{i-1,i}|^p
1_{|W^1_{i-1,i}|\ge (\tan\delta) |W^{\perp}_{i-1,i}|}
+\sum_{i=1}^{2^N}
\left(
|W^1_{i-1,i}|^p-(\tan\delta)^p |W^{\perp}_{i-1,i}|^p
\right)
1_{|W^1_{i-1,i}|<(\tan\delta) |W^{\perp}_{i-1,i}|}\\
&\ge \sum_{i=1}^{2^N}|W^1_{i-1,i}|^p
-(\tan\delta)^p\sum_{i=1}^{2^N}
|W^{\perp}_{i-1,i}|^p\\
&\ge\sum_{i=1}^{2^N}|W^1_{i-1,i}|^p
-(\tan\delta)^p(d-1)^{(p-1)/p}\sum_{k=2}^d\sum_{i=1}^{2^N}|W^k_{i-1,i}|^p.
\end{align*}
By Remark D.3.2 in \cite{nourdin-peccati}, for all $1\le k\le d$,
we have
\begin{align*}
\lim_{N\to\infty}\frac{2^{NHp-N}}{(t-s)^p}\sum_{i=1}^{2^N}
\left|W^k_{i-1,i}\right|^p=
\int_{\RR}|x|^pd\mu(x)\quad\text{in $L^2$},
\end{align*}
where $\mu$ is the 1 dimensional standard normal distribution.
This implies that if $pH<1$
for sufficiently small $\delta$ there exists a subsequence
$N_k\uparrow \infty$ such that
$\lim_{N_k\to\infty}I_{N_k}=+\infty$ for almost all $W$.
  
\begin{pro}
 Let $\mathbf{X}\in \mathscr{C}^{\beta}(\RR^d)$ $(1/3<\beta \le 1/2)$.
Let us choose $p$ such that $1/(2\beta)<p<1/\beta$.
Assume that the first level path $X_t$ satisfies ${\rm C}(\delta, p, \xi)$
for any $\xi\in K$ and a fixed positive $\delta$,
where $K$ is a countable dense subset of $S^{d-1}$.
Let $Y_t$ be a continuous path on $\RR^n$.
  Suppose $Y_t=Z_t+\Psi_t=\tilde{Z}_t+\tilde{\Psi}_t$,
where $Z, \tZ\in \mathscr{D}^{2\beta}_X(\RR^n)$ and $\Psi, \Psi'$
are continuous bounded variation paths.
Then $Z'_t=\tZ'_t$ $(0\le t\le T)$ holds.
\end{pro}

\begin{proof}
 Suppose that there exists $s_0$ such that
 $Z'_{s_0}-\tZ'_{s_0}\ne 0$.
 Then there exist $0<s_1<s_0<s_2<T$ and
 unit vectors $\xi_1, \xi_2\in K$ such that
 \begin{align}
&  \ep_{s_1,s_2}=\inf_{s_1<s<s_2}|(Z'_s-\tZ'_s)^{\ast}\xi_1|>0,\\
  & \sup_{s_1<s<s_2}\theta\left((Z'_s-\tZ'_s)^{\ast}\xi_1, \xi_2\right)
  \le\delta/2.
 \end{align}
 We use ${\rm C}(\delta,p,\xi)$ 
for $\xi=\xi_2$ and the interval $[s_1,s_2]$.
 Let $t^N_i,t^N_{i+1}$ be partition points of ${\cal P}_N$.
If $i\in {\cal P}_N(X,\xi_2,\delta)$,
 then
 \begin{align}
\mbox{$\theta\left(X_{t^N_{i-1},t^N_i},
 (Z'_s-\tZ'_s)^{\ast}\xi_1\right)\le\frac{\pi}{2}-\frac{\delta}{2}$\,
 or\,
 $\theta\left(-X_{t^N_{i-1},t^N_i},
 (Z'_s-\tZ'_s)^{\ast}\xi_1\right)\le\frac{\pi}{2}-\frac{\delta}{2}$ holds.}
 \end{align}
   Therefore, for $i\in {\cal P}_N(X,\xi_2,\delta)$,
 \begin{align}
  \left|
  \left(X_{t^N_{i-1},t^N_i}, (Z'_{t^N_{i-1}}-
  \tZ'_{t^N_{i-1}})^{\ast}\xi_1\right)
  \right|
  &\ge \sin\left(\frac{\delta}{2}\right)\left|X_{t^N_{i-1},t^N_i}\right|
  \left|(Z'_{t^N_{i-1}}-\tZ'^N_{t_{i-1}})^{\ast}\xi_1\right|\nn\\
&  \ge \ep_{s_1,s_2}\sin\left(\frac{\delta}{2}\right)
  \left|X_{t^N_{i-1},t^N_i}\right|.
   \end{align}
 Also we note that
 \begin{align}
  \left(X_{t^N_{i-1},t^N_i},
  (Z'_{t^N_{i-1}}-\tZ'_{t^N_{i-1}})^{\ast}\xi_1\right)
  &=
  \left(\tPsi_{t^N_{i-1},t^N_i}-\Psi_{t^N_{i-1},t^N_i}, \xi_1\right)
  +\left(R^{\tZ}_{t^N_{i-1},t^N_i}-R^Z_{t^N_{i-1},t^N_i}, \xi_1\right).
 \end{align}
 Thus we obtain
 \begin{align}
  \sum_{i\in {\cal P}_N(X,e_2,\delta)}
   \left|X_{t^N_{i-1},t^N_i}\right|^{p}
  \le 2^{p-1}\left(\ep_{s_1,s_2}\sin\left(\frac{\delta}{2}\right)\right)^{-p}
    \left(\|\tPsi-\Psi\|_{p\hyp var, [s_1,s_2]}^p+
  \|R^{\tZ}-R^Z\|_{p\hyp var, [s_1,s_2]}^p\right).
 \end{align}
Since $p>1/(2\beta)$, the right hand side of the above inequality is
bounded.
 This contradicts the assumption on $X$.
\end{proof}
  
  \subsection{Path-dependent RDE with drift}
    We consider path-dependent rough differential equations with
  drift term.
  It is necessary to consider such kind of equations for the study
  of reflected diffusions with drift.
  In the case of $n$-dimensional Brownian motion $W_t=(W^1_t,\ldots,W^n_t))$,
  one possible approach to include the drift term is to consider the
  geometric rough path defined by
  $\tilde{W}_t=(W_t,t)\in \RR^{d+1}$.
  By considering the geometric rough path which is naturally defined by
  Brownian rough path ${\bf W}_{s,t}$, we may extend all results in
  previous sections to the corresponding results for the solutions
  to the equation,
  \begin{align}
   Z_t&=\xi+\int_0^t\sigma(Z_s,A(Z)_s)d{\bf
   W}_s+\int_0^tb(Z_s,A(Z)_s)ds.
  \end{align}
  However, we need to assume $b\in \Lip^{\gamma-1}(\RR^n\times \RR^n,\RR^n)$
  ~$(2<\gamma\le 3)$
  to do so and the assumption on $b$ is too strong.
  Hence, we explain different approach to deal with the drift term.
Let us consider $\beta$-H\"older
  rough path, that is, the case where $\omega(s,t)=|t-s|$.
 Let $b\in C^1_b(\RR^n\times \RR^n,\RR^n)$
 and consider the equation,
\begin{align}
 Z_t&=\xi+\int_0^t\sigma(Z_s,\Psi_s)d\mathbf{X}_s+\int_0^tb(Z_s,\Psi_s)ds,
 \label{Z equation''}\\
 \Psi_t&=A\left(\xi+\int_0^{\cdot}\sigma(Z_s,\Psi_s)d\mathbf{X}_s+
 \int_0^{\cdot}b(Z_s,\Psi_s) ds\right)_t
 \label{Psi equation''}.
\end{align}
 The meaning of this equation is as follows.
 The controlled path $(Z,Z')$ and $\Psi$
 are elements as in the definition of
 $\Xi_{s,t}$ and $I(Z,\Psi)_{s,t}$.
 In the present case, we consider
 \begin{align}
  \tilde{\Xi}_{s,t}:=
  \sigma(Y_s)X_{s,t}+(D_1\sigma)(Y_s)Z'_s\XX_{s,t}+
  (D_2\sigma)(Y_s)\int_s^t\Psi_{s,r}\otimes dX_r
  +b(Y_s)(t-s).\label{tXi}
 \end{align}
 Then, 
 $(\delta\tilde{\Xi})_{s,u,t}=(\delta\Xi)_{s,u,t}+
 \left(b(Y_s)-b(Y_u)\right)(t-u)$ for $s<u<t$
 and
 \begin{align}
  \lefteqn{|\left(b(Y_s)-b(Y_u)\right)(t-u)|}\nonumber\\
  &\le
  \|Db\|_{\infty}\left(\|Z'\|_{\infty}\|X\|_{\beta}(t-s)^{\beta}+
  \|R^Z\|_{2\beta}(t-s)^{2\beta}+\|\Psi\|_{q\hyp var,\ta}
  (t-s)^{\ta}\right)(t-s).
 \end{align}
 By using this, we define
 \begin{align}
  I(Z,\Psi)_{s,t}&:=\int_s^t\sigma(Z_u,\Psi_u)d\mathbf{X}_u+
  \int_s^tb(Z_u,\Psi_u)du=
  \lim_{|{\cal P}|\to 0}\sum_{{\cal P}}\tilde{\Xi}_{u,v}.
 \end{align}
 For this, $\left(I(Z,\Psi)_{0,t},\sigma(Z_t,\Psi_t)\right)\in
 {\mathscr D}^{2\beta}_{X}$ and similar estimates to
 Lemma~\ref{estimate of rough integral} holds.
 Moreover, Lemma~\ref{continuity} and
 Lemma~\ref{invariance and compactness} hold.
 Thus, Theorem~\ref{main theorem} holds
 for suitable constants
   which depend only on $\sigma, b, \beta, p, \gamma$.
   In the case of reflected rough differential equation,
   all statements can be extended to
   differential equations with drift term
   $b\in C^1_b$.
   In particular, the extension of 
   Corollary~\ref{support theorem for reflected diffusion} 
to reflected diffusion with the drift term
   gives an extension of
   the support theorem 
   in \cite{ren-wu}.

\section*{Acknowledgment}
The author would like to thank the referee for the 
suggestions which improve the 
quality of the paper.

\section*{Funding}
Open Acceess funding provided by The University of Tokyo.
 This research was partially supported by
Grant-in-Aid for Scientific Research (B) No.~20H01804.

\section*{Data Availability Statement}
The manuscript is
available in the arXiv:
https://doi.org/10.48550/arXiv.1608.03083

\end{document}